\documentclass{article}
\usepackage[english]{babel}
\usepackage{amsmath,amsthm,amssymb,xcolor,tabularx}
\usepackage[utf8]{inputenc}
\usepackage{float}
\usepackage{xcolor}
\definecolor{green1}{cmyk}{1,0,1,0}
\usepackage{caption}
\usepackage{booktabs}
\usepackage{array}
\usepackage{makecell}
\usepackage{abstract}
\usepackage{multirow}
\usepackage{subcaption}
\usepackage[anchorcolor=blue, linkcolor=red, urlcolor=black, citecolor=blue, pagebackref=true, colorlinks=true]{hyperref}
\usepackage[toc,page,header]{appendix}
\usepackage{minitoc}
\usepackage{bm}
\usepackage{enumitem}
\usepackage[numbers,sort&compress]{natbib}
\usepackage{graphicx}
\usepackage{enumitem}
\setlist[itemize]{topsep=2pt, leftmargin=1em}
\setlist[enumerate]{leftmargin=1.8em, label=(\roman*)}

\usepackage{aliascnt}
\usepackage[ruled]{algorithm}
\usepackage{algorithmic}
\makeatother

\usepackage{geometry}
\theoremstyle{definition}
\newtheorem{theorem}{Theorem}[section]
\newaliascnt{lemma}{theorem}
\newtheorem{lemma}[lemma]{Lemma}
\aliascntresetthe{lemma}
\newaliascnt{corollary}{theorem}

\aliascntresetthe{corollary}
\newaliascnt{proposition}{theorem}
\newtheorem{proposition}[proposition]{Proposition}
\aliascntresetthe{proposition}

\newaliascnt{remark}{theorem}
\newtheorem{remark}[remark]{Remark}
\aliascntresetthe{remark}
\newtheorem{assumption}{Assumption}[section]
\newaliascnt{example}{theorem}
\newtheorem{example}[example]{Example}
\aliascntresetthe{example}

\usepackage[capitalize]{cleveref}
\crefname{algorithm}{Algorithm}{Algorithms}
\Crefname{algorithm}{Algorithm}{Algorithms}
\crefname{algocf}{Algorithm}{Algorithms}
\Crefname{algocf}{Algorithm}{Algorithms}
\crefname{algo}{Algorithm}{Algorithms}
\Crefname{algo}{Algorithm}{Algorithms}
\crefname{theorem}{Theorem}{Theorems}
\Crefname{theorem}{Theorem}{Theorems}
\crefname{lemma}{Lemma}{Lemmas}
\Crefname{lemma}{Lemma}{Lemmas}
\crefname{corollary}{Corollary}{Corollaries}
\Crefname{corollary}{Corollary}{Corollaries}
\crefname{proposition}{Proposition}{Propositions}
\Crefname{proposition}{Proposition}{Propositions}
\crefname{assumption}{Assumption}{Assumptions}
\Crefname{assumption}{Assumption}{Assumptions}
\crefname{example}{Example}{Examples}
\Crefname{example}{Example}{Examples}
\crefname{remark}{Remark}{Remarks}
\Crefname{remark}{Remark}{Remarks}
\crefname{figure}{Figure}{Figures}
\Crefname{figure}{Figure}{Figures}

\DeclareRobustCommand{\namerefc}[2]{\hyperref[#2]{\textcolor{#1}{\nameref*{#2}}}}

\newcommand{\sosgm}{\texttt{SOSGM}}
\newcommand{\ov}{\namerefc{purple}{alg:osgm-svrg}}
\newcommand{\osgd}{\namerefc{purple}{alg:sosgm}}

\global\long\def\vertiii#1{\left\vert \kern-0.25ex  \left\vert \kern-0.25ex  \left\vert #1\right\vert \kern-0.25ex  \right\vert \kern-0.25ex  \right\vert }

\global\long\def\argmin{\operatornamewithlimits{arg\,min}}

\global\long\def\and{\mathrm{and}}

\global\long\def\Rbb{\mathbb{R}}

\newcommand{\assign}{:=}
\newcommand{\tmop}[1]{\ensuremath{\operatorname{#1}}}

\newcolumntype{Y}{>{\centering\arraybackslash}X}

\newcommand{\os}{\texttt{OSGM}}

\newcommand{\ovh}{\texttt{OSGM-SVRG}}
\newcommand{\osgdh}{\texttt{OSGM-SGD}}
\newcommand{\osk}{\texttt{OSGM-SketchySVRG}}

\newcommand{\svrg}{\texttt{SVRG}}
\newcommand{\saga}{\texttt{SAGA}}
\newcommand{\kat}{\texttt{L-Katyusha}}
\newcommand{\ssvrg}{\texttt{SketchySVRG}}
\newcommand{\ssaga}{\texttt{SketchySAGA}}
\newcommand{\skat}{\texttt{SketchyKatyusha}}
\newcommand{\promise}{\texttt{PROMISE}}
\newcommand{\sh}{\texttt{SVRG-HBM}}
\newcommand{\adam}{\texttt{Adam}}
\newcommand{\sgd}{\texttt{SGD}}
\newcommand{\hbm}{\texttt{HBM}}
\newcommand{\agd}{\texttt{SGDN}}
\newcommand{\hd}{\texttt{HD}}
\newcommand{\sgdhd}{\texttt{SGD-HD}}
\newcommand{\sgdnhd}{\texttt{SGDN-HD}}
\newcommand{\adamhd}{\texttt{Adam-HD}}

\date{}
\begin{document}

\author{Wanyu Zhang\footnotemark[1]\quad 
Wenzhi Gao\footnotemark[1] \quad
Yinyu Ye\footnotemark[1] \quad
Madeleine Udell\footnotemark[1]
}

\renewcommand{\thefootnote}{\fnsymbol{footnote}}
\footnotetext[1]{Stanford University. \texttt{\{zwanyu,gwz,yyye,udell\}@stanford.edu}}
\renewcommand{\thefootnote}{\arabic{footnote}}

\title{Stochastic Gradient Methods with Online Scaling}

\maketitle

\begin{abstract}
    
This paper introduces Stochastic Online Scaled Gradient Methods ({\sosgm}), a generalization of the recently developed adaptive preconditioning framework in \cite{gao2025gradient,chu2025gradient} to stochastic optimization.
Under standard assumptions, we establish convergence guarantees for {\sosgm} using large batchsize or variance reduction. {\sosgm} is compatible with popular diagonal and/or low-rank preconditioners as well as heavy-ball momentum, while maintaining memory and computation cost comparable to \texttt{Adam}.
Extensive numerical experiments demonstrate the strong empirical performance of {\sosgm}. Using a diagonal preconditioner, {\sosgm} and its variants substantially outperform existing adaptive first-order methods across a range of statistical learning tasks.

\end{abstract}

\section{Introduction}
\label{sec:intro}

Optimization on large-scale datasets uses stochastic gradient descent ({\sgd}) and its variants. However, {\sgd} typically achieves only sublinear convergence rates on smooth, strongly convex objectives due to non-vanishing noise in the stochastic gradient. 
Although variance-reduction (VR) methods, such as {\svrg} \cite{johnson2013accelerating}, {\saga} \cite{defazio2014saga}, and \texttt{Katyusha} \cite{allen2018katyusha}, address this issue and offer improved convergence rates for convex problems, they still perform poorly on ill-conditioned data, which are pervasive in real-world problems \cite[Table 2]{frangella2024sketchysgd}. 
Ill-conditioning forces these methods to use conservative stepsizes for stability, limiting progress even with careful tuning.

Methods that use second-order information, such as Newton's method or \texttt{L-BFGS}, can converge quickly even on ill-conditioned data. 
While these methods do not scale to large datasets, 
many stochastic second-order methods have been proposed to deliver better performance than first-order methods. 
Most use subsampling-based approximations to the Hessian, 
which either directly compute the search direction using the inverse of a subsampled Hessian \cite{roosta2019sub, bollapragada2019exact, derezinski2022stochastic}, 
or use subsampled Hessian to stabilize \texttt{L-BFGS}-style updates \cite{byrd2016stochastic,moritz2016linearly, gower2016stochastic}.
However, the former can be computationally heavy due to repeated linear-system solves involving subsampled Hessians, and both suffer from unstable curvature estimates.

Sketching-based preconditioners offer a complementary approach to handling ill conditioned problems.
Notably, \texttt{SketchySGD} \cite{frangella2024sketchysgd} and {\promise} \cite{frangella2024promise} propose using scalable sketching methods to construct randomized low-rank preconditioners. 
These methods can boost the performance of {\svrg}, {\saga}, and \texttt{Katyusha}, among others, and work well for large-scale dense data.
However, for sparse data, the cost of storing and applying a low rank preconditioner compares poorly to the cost of storing or applying the data matrix, and so low-rank preconditioners are not recommended.

On a different front, in deep learning,
adaptive gradient methods are widely used; many can be viewed as applying adaptive diagonal preconditioners,
such as \texttt{AdaGrad} \cite{duchi2011adaptive}, \texttt{RMSProp} \cite{hinton2012neural}, and {\adam} \cite{kingma2014adam}.
These methods deliver faster convergence than {\sgd} in practice.
These methods are often strong empirically, but their update rules are typically heuristic and do not directly optimize a principled objective for choosing the preconditioner.
While these methods accelerate training in practice, the diagonal preconditioners of adaptive optimizers like {\adam} are not designed to explicitly reduce the condition number of the problem.

Providing a theoretical foundation for stepsize adaptation, online scaled gradient methods ({\os}) \cite{gao2025gradient,chu2025gradient} offer a deterministic framework to adjust the stepsize / preconditioner,
where the problem of choosing a stepsize is formulated as an online decision-making problem tackled by online learning algorithms.
Theoretically, {\os} achieves convergence results that are asymptotically no worse than the optimal stepsize.
By learning a matrix stepsize online, {\os} can adapt to the local geometry of the loss landscape and mitigate the effect of ill-conditioning.

The key missing piece is a stochastic counterpart with finite-sum convergence guarantees:
\begin{center}
  \emph{Can we design efficient adaptive stochastic gradient methods that are robust to ill-conditioning?}
\end{center}
This work develops an affirmative answer to the question by introducing Stochastic Online Scaled Gradient Methods ({\sosgm}), which generalize {\os} to the stochastic optimization setting. 
{\sosgm} treats matrix stepsize selection as an online decision problem and include instantiations {\osgd} and {\ov}. 
{\sosgm} is compatible with scalar, diagonal, or matrix stepsizes, and optional heavy-ball momentum; for diagonal stepsizes, the memory and per-iteration cost are comparable to {\adam}.

\paragraph{Structure of the paper} This paper is organized as follows. 
\Cref{sec:sosgm} introduces {\sosgm}, an {\os}-style framework to tune the matrix stepsize. 
\Cref{sec:large-batchsize} develops {\osgd} to learn the stepsize of {\sgd}, and establishes high-probability convergence guarantees under a gradient-norm condition. 
\Cref{sec:vr} presents {\ov}, which adjusts the stepsize of {\svrg} in the outer loop. We prove the linear convergence for strongly convex objectives. 
\Cref{sec:exp} showcases empirical performance of {\sosgm} on statistical learning and deep learning problems.

\paragraph{Naming conventions} 
Throughout the paper, colored, name-referenced algorithm names (such as \osgd) denote the theoretically analyzed {\sosgm} algorithms; uncolored, plain-text names (such as {\osk}) denote the practical variants used in experiments.
We use \emph{stepsize} to refer to the stepsize/preconditioner in the optimization update, and we reserve \emph{learning rate} for the stepsize used by the online gradient descent (hypergradient) update of the stepsize/preconditioner.

\subsection{Related work}

\paragraph{Hypergradient descent} The hypergradient descent ({\hd}) method was first presented in \cite{almeida1999parameter} as a heuristic to accelerate {\sgd}. 
Similar ideas have been explored independently, 
such as incremental delta-bar-delta \cite{sutton1992adapting}, 
stochastic meta-descent \cite{schraudolph1999local}, 
and other adaptive schemes \cite{jacobs1988increased,mahmood2012tuning}. 
This method was later rediscovered and called hypergradient descent by \cite{baydin2018hypergradient}, 
which also extended the {\hd} idea to tune {\sgd} and {\adam}, 
with experiments on optimizing statistical learning problems and neural networks. 
Theoretical understanding of {\hd} developed later, starting with
\cite{rubio2017convergence} which analyzed convergence of {\hd} for deterministic gradient descent on convex quadratic objectives, and continuing with \cite{powerball2023} which analyzed convergence of a particular stochastic optimizer under the {\hd} stepsize.

A more complete explanation for the empirical advantage of {\hd}
has recently been developed in a series of work \cite{gao25a,chu2025provable,gao2025gradient,chu2025gradient}, 
which establishes the {\os} framework to choose a matrix stepsize by online learning.
{\os} uses online gradient descent on different loss functions, and has strong trajectory-based convergence guarantees in the deterministic setting. 
This paper adapts {\os} for stochastic gradient methods.

\paragraph{Stochastic second-order methods} 
A variety of stochastic analogs of classical Newton-type methods have been developed to address large-scale ill-conditioned optimization problems. 
Several works propose subsampling-based Hessian approximations, 
which either directly compute the search direction using the inverse of a subsampled Hessian \cite{roosta2019sub, bollapragada2019exact, derezinski2022stochastic}, 
or use a subsampled Hessian to stabilize \texttt{L-BFGS}-style updates \cite{byrd2016stochastic,moritz2016linearly, gower2016stochastic}.
Several of these methods require expensive repeated linear-system solves involving subsampled Hessians, and most require a large batchsize or sufficiently good initialization for convergence guarantees.

Compared to stochastic second-order methods, {\sosgm} adapts to ill-conditioning without forming Hessian approximations or solving linear systems. Instead, {\sosgm} learns a matrix stepsize by online gradient descent, while retaining per-iteration costs comparable to {\adam} when instantiated with diagonal stepsizes.

\paragraph{Preconditioned stochastic gradient methods}
This line of work designs explicit preconditioners for stochastic gradient methods. 
\cite{liu2019acceleration} accelerated {\svrg} and \texttt{Katyusha} by applying inexact preconditioners derived from approximate Hessian solves. 
\cite{frangella2024sketchysgd, frangella2024promise} 
used randomized low-rank preconditioners to improve the convergence of {\sgd} and  methods such as {\svrg}, {\saga}, and \texttt{Loopless Katyusha} ({\kat} \cite{kovalev2020don}). 
They established global linear convergence with a constant batchsize,
and demonstrated excellent performance through experiments on large-scale ill-conditioned machine learning problems. 
\cite{sun2025sapphire} showed how to use randomized low-rank preconditioners in the context of a stochastic proximal gradient methods, with impressive empirical results for regularized statistical learning problems like LASSO and elastic net.

\paragraph{Adaptive stochastic gradient methods} 
Adaptive gradient methods are widely used in stochastic optimization, particularly for training neural networks. 
Many of these methods can be viewed as employing a diagonal preconditioner learned from previous gradients, 
such as \texttt{AdaGrad} \cite{duchi2011adaptive}, {\adam} \cite{kingma2014adam}, \texttt{AdaHessian} \cite{yao2021adahessian}, \texttt{Lion} \cite{chen2023symbolic}, \texttt{Sophia} \cite{liu2024sophia}, and \texttt{Adafactor} \cite{shazeer2018adafactor}. 
More recently, optimizers like \texttt{Shampoo} \cite{gupta2018shampoo}, \texttt{K-FAC} \cite{martens2015optimizing}, and \texttt{SOAP} \cite{vyas2025soap} apply structured (non-diagonal) preconditioners and show promising performance in training large language models.

Theoretical work on adaptive stochastic gradient methods develops principled stepsize selection procedures with provable convergence. 
For example, \cite{Malitsky20} proposed an adaptive stepsize based on local curvature estimates, with convergence rates depending on local geometry; 
its counterpart for stochastic gradient methods was studied in \cite{aujol2025stochastic}. 
Line-search rules for {\sgd} were explored in \cite{vaswani2019painless,vaswani2025armijo}, and \cite{vaswani2025armijo} further established faster convergence than standard {\sgd} under interpolation-type conditions. Finally, \cite{loizou2021stochastic} analyzed the Polyak stepsize for {\sgd} and provided convergence guarantees in both convex and non-convex settings. 
Recently, adaptive methods without knowing the problem parameters have been proposed, including D-adaptation for unknown distance-to-optimality \cite{defazio2023learning} and schedule-free optimizers that avoid dependence on a pre-specified training horizon \cite{defazio2024road}.  
However, most work in this line focuses on \emph{scalar} stepsizes and therefore does not improve the condition number of the problem.

\subsection{Notations}

We use $\| \cdot \|$ to denote the Euclidean norm of vectors or the operator norm of matrices, and $\langle \cdot, \cdot
\rangle$ to denote the Euclidean or Frobenius inner product. The notation
$\| A \|_F \assign \sqrt{\sum_{i j} a_{i j}^2}$ denotes the matrix Frobenius norm. Given a closed convex set $\mathcal{P}$, $\Pi_{\mathcal{P}} [\cdot]$ denotes the orthogonal projection onto $\mathcal{P}$; $\tmop{dist} (P, \mathcal{P}) \assign \| P - \Pi_{\mathcal{P}}
[P] \|_F$ denotes the distance between a point $P$ and set 
$\mathcal{P}$; Given a vector $v\in \mathbb{R}^d$, $\text{Diag}(v)$ denotes the diagonal matrix with elements of $v$ on its diagonal.
We use $\mathcal{X}^{\star} = \{ x : f (x) = f^{\star} \}$ to denote the optimal
set of $f$; $\tmop{diam} (\mathcal{P}) = \max_{X, Y \in \mathcal{P}}  \| X
- Y \|_F$ denotes the diameter of the set $\mathcal{P}$ in Frobenius norm. 
A function $f$ is $L$-smooth if it satisfies $\| \nabla f(x) - \nabla f(y) \| \leq L \|x - y\|$ for all $x, y \in \mathbb{R}^d$. 
We use superscripts $x^k$ to index algorithm iterates and subscripts $P_k$ to index the stepsize sequence. The notation $\mathbb{S}^{n}$ denotes the set of $n$ by $n$ symmetric matrices. 
For asymptotic complexity, $\tilde{\mathcal{O}}(\cdot)$ hides polylogarithmic factors.

\section{Stochastic online scaled gradient methods}
\label{sec:sosgm}

To motivate the stochastic setting, this section first reviews the deterministic {\os} framework and the challenges that arise when gradients are noisy, then introduces {\sosgm}, a general framework for learning the matrix stepsize of stochastic gradient methods.

\subsection{Deterministic online scaled gradient methods}
Consider the unconstrained minimization of a deterministic smooth convex function
\( \min_{x\in \Rbb^d} ~ f(x)\)
with preconditioned gradient descent:
\[
x^{k+1} = x^k - P_k \nabla f(x^k),
\]
where $P_k\in \mathbb{R}^{d \times d}$ is a matrix stepsize that can be scalar ($P_k = \alpha_k I$ for $\alpha_k \in \mathbb{R}$), diagonal ($P_k = \mbox{diag}(v_k)$ for $v_k \in \mathbb{R}^d$) or a general matrix. 
{\os} \cite{gao2025gradient,chu2025gradient} is a framework that uses online learning to adjust stepsize $\{P_k\}$.

To motivate {\os}, consider the standard analysis of a linearly convergent method. The usual goal is to prove a uniform one-step contraction,
\(
\tfrac{f(x^{k+1})-f^\star}{f(x^k)-f^\star}\le 1-\tfrac{1}{\kappa}
\),
for all $k$. Multiplying these per-iteration bounds gives
\[
\tfrac{f(x^{K}) - f^\star}{f(x^0) - f^\star}=\textstyle \prod_{k=0}^{K-1} \tfrac{f(x^{k+1})-f^\star}{f(x^k) - f^\star} \leq (1-\tfrac{1}{\kappa})^K.
\]

Different from the above analysis, {\os} first chains the progress and gives an upper bound by the arithmetic-geometric mean inequality
\[
\tfrac{f(x^{K}) - f^\star}{f(x^0) - f^\star}=\textstyle \prod_{k=0}^{K-1} \tfrac{f(x^{k+1})-f^\star}{f(x^k) - f^\star} \leq (\tfrac{1}{K} \textstyle \sum_{k=0}^{K-1} \tfrac{f(x^{k+1})-f^\star}{f(x^k) - f^\star} )^K .
\]
From this, faster convergence is achieved by minimizing the average contraction ratio. Define $r_{x^k}(P_k) \assign \tfrac{f(x^{k} - P_k \nabla f(x^k))-f^\star}{f(x^k) - f^\star}$; it suffices to choose the stepsizes $\{P_k\}$ sequentially to minimize $\tfrac{1}{K} \textstyle \sum_{k=1}^{K} r_{x^k}(P_k)$. In this view, stepsize selection is formulated as an online decision-making problem, thus motivating the use of online learning algorithms to adjust $\{P_k\}$. For example, online gradient descent 
\[
P_{k+1} = P_k - \eta \nabla r_{x^k} (P_k)
\]
yields sublinear regret $\tfrac{1}{K} \textstyle \sum_{k=1}^{K} r_{x^k}(P_k)\leq \tfrac{1}{K} \textstyle \sum_{k=1}^{K} r_{x^k}(\hat P) + \mathcal{O}(\tfrac{1}{\sqrt{K}})$ for any fixed stepsize $\hat P$ \cite{orabona2019modern}. Choose $\hat P$ as a good stepsize $P_{\star}$ that achieves condition number $\kappa_\star < \kappa$ and progress $r_{x^k} (P_{\star}) \leq 1 - \tfrac{1}{\kappa_\star}$ for all $k$. The convergence guarantee follows:
\[
\tfrac{f(x^{K}) - f^\star}{f(x^0) - f^\star}\leq 
(\tfrac{1}{K} \textstyle \sum_{k=0}^{K-1} r_{x^k}(P_k) )^K \leq
(1 - \tfrac{1}{\kappa_\star} + \mathcal{O}(\tfrac{1}{\sqrt{K}}) )^K.
\]
This result suggests that the performance of {\os} is competitive with a good stepsize $P_\star$ when the total number of iterations $K$ is large, even without knowledge of $P_\star$.

The above algorithmic intuition is generalized into the {\os} framework. In each iteration, 
1) stepsize scheduler makes decision $P_k$ from a candidate set $\mathcal{P}$ and proposes an update $x^{k+1/2} = x^k - P_k \nabla f(x^k)$; 
2) the landscape chooses the next iterate $x^{k+1} = \mathcal{M} (x^k, x^{k+1/2})$, for example, with a null step
\begin{equation}\tag{Null step}
  x^{k+1} = \argmin_{x \in \{x^k, x^{k+1/2}\}} \{f(x^k - P_k \nabla f(x^k)), f(x^k)\},
\end{equation}
and provides feedback $\ell_{x^k}(P_k)$ to the scheduler. Two useful feedback functions are
\begin{equation}\label{eq:dtm-feedback}
  \text{ratio} \;\;
  r_{x^k}(P_k) = \tfrac{f(x^k - P_k \nabla f(x^k))-f^\star}{f(x^k) - f^\star} \; 
  \text{or hypergradient } \;\;
  h_{x^k}(P_k) = \tfrac{f(x^k - P_k \nabla f(x^k))-f(x^k)}{\|\nabla f(x^k)\|^2};
\end{equation}
3) scheduler updates the stepsize $P_k$ by an online learning algorithm (such as online gradient descent) with respect to feedback $\ell_{x^k}(P_k)$.

\subsection{Stochastic {\os}}

This paper considers the following finite-sum problem,
\begin{equation}\label{eq:F}
  \min_{x\in \mathbb{R}^d} f (x) \assign \tfrac{1}{n}  \textstyle \sum_{i = 1}^n f_i (x).
\end{equation}
To solve \eqref{eq:F}, consider stochastic preconditioned gradient descent with a heavy-ball momentum ({\hbm}),
\begin{equation*}
  x^{k+1} = x^k - P_k \, g^k + \beta_k (x^k - x^{k-1}),
\end{equation*}
where $g^k$ is an unbiased estimator of the full gradient $\nabla f(x^k)$. 
For example, $g^k$ could be 
\begin{align*}
& \text{({\sgd}) the average of a batch of stochastic gradients,} \qquad   &&\tfrac{1}{|\xi_k|} \textstyle \sum_{i \in \xi_k} \nabla f_i (x^k),\, \text{or} \\
& \text{({\svrg}) a VR estimator with a snapshot $\tilde x$,} \qquad  &&\nabla f_i (x^k) - \nabla f_i (\tilde x) + \nabla f (\tilde x).
\end{align*}
We develop a framework, {\sosgm} (\cref{alg:stoc-osgm}), which adapts {\os} to learn matrix stepsizes for stochastic methods such as {\svrg} and {\sgd}.
At each iteration, {\sosgm} alternates between updating the iterate $x^k$ and updating the stepsize $P_k$.
This framework can be instantiated as the algorithms introduced later:
\begin{itemize}
  \item {\osgd}. Update $P_k$ at every iteration using stochastic ratio or hypergradient feedback.
  \item {\ov}. Update $P_k$ every $m$ inner iterations. Set $\ell_k = 0$ if $\operatorname{mod}(k,m)\neq 0$; otherwise, use deterministic regularized ratio or hypergradient feedback, since the full gradient is available.
\end{itemize}

\begin{algorithm}[h]
\caption{\sosgm}
\label{alg:stoc-osgm}
\begin{algorithmic}[1]
\STATE \textbf{Input:} Initial point $x^0$, initial stepsize $P_0$, candidate stepsize set $\mathcal{P}$, {\os} learning rate $\eta$, \quad momentum sequence $\{\beta_k\}$,
    feedback $\ell_{k}$
\FOR{$k = 0,1,2,\dots$}
        \STATE Compute stochastic gradient estimator $g^k$ on $x^k$

        \STATE Stochastic gradient step
        $x^{k+1}=x^k - P_k \, g^k + \beta_k (x^k - x^{k-1})$

        \STATE Construct feedback $\ell_{k}$ and update stepsize $P_k$ by {\os}
        $P_{k+1} = \Pi_{\mathcal{P}} [P_k - \eta \nabla \ell_{k}(P_k)]$
        
    \ENDFOR
\end{algorithmic}
\end{algorithm}

\subsubsection{Assumptions}

To analyze the performance of these methods, the following assumptions are used throughout the paper.
\begin{assumption}\label{as:smooth}
  For each $i$, $f_i$ is convex and $L$-smooth and $\mu$-strongly convex with $\mu \geq 0$:
  \[
  f_i(y)\geq f_i(x)+\langle \nabla f_i(x), y-x\rangle + \tfrac{\mu}{2}\|y-x\|^2,\quad \forall x,y.
  \]
\end{assumption}
This assumption holds for problems such as regularized generalized linear models.

The stepsize is chosen from a closed convex candidate set $\mathcal{P} \subseteq \mathbb{R}^{d \times d}$. 
It is natural to have the set of preconditioners include $0$, 
as this represents a safe initialization that (we hope will learn a better preconditioner but) does not move the iterate,
and also the theoretically optimal scalar stepsize $\tfrac{1}{L} I \in \mathcal{P}$.
Hence the diameter of $\mathcal{P}$ must be at least $\tfrac{1}{L}$. 
We further impose the assumption that the diameter of $\mathcal{P}$ is in the order of $\tfrac{1}{L}$.
\begin{assumption}\label{as:diam}
  The candidate set of stepsize $\mathcal{P}$ is bounded, $\operatorname{diam}(\mathcal{P}) \leq D$ and $D=\mathcal{O}(\tfrac{1}{L})$.
\end{assumption}

\subsubsection{Failure of naive feedback design}

In stochastic setting where the accurate function values and gradients are expansive or unavailable, one challenge of applying {\sosgm} is the noisy feedbacks. 
We first notice that a naive extension of hypergradient or ratio feedback cannot guarantee convergence.

For {\sgd}, at iteration $k$, we sample a mini-batch $\xi^k$, take a stochastic gradient step on the sampled objective $f_{\xi^k}$. 
To apply {\sosgm}, define the feedback by measuring progress on the same sampled objective, i.e., $f_{\xi^k}(x) \assign \tfrac{1}{|\xi^k|}\sum_{i\in \xi^k} f_i(x)$.
\begin{equation}
  r_{x^k, \xi^k} (P_k) = \tfrac{f_{\xi^k} (x^k - P_k \nabla f_{\xi^k} (x^k)) -
  f^{\star}}{f_{\xi^k} (x^k) - f^{\star}}, \quad
  h_{x^k, \xi^k} (P_k) = \tfrac{f_{\xi^k} (x^k - P_k \nabla f_{\xi^k} (x^k)) -
  f_{\xi^k} (x^k)}{\| \nabla f_{\xi^k} (x^k) \|^2}. \label{eq:hoverfit}
\end{equation}
These feedbacks are \emph{in-sample}: they reuse the same mini-batch $\xi^k$ for both the update and the evaluation of progress.
As a result, they can overfit to the selected mini-batch and fail to reflect progress on the full objective $f$.

We provide a counterexample in \Cref{app:counterexample}, which shows that {\sosgm} with feedback \eqref{eq:hoverfit} does not necessarily converge, even when each $f_{\xi^k}$ is convex and smooth, all sampled objectives share the same minimizer, and $f$ has bounded sublevel sets.

In the counterexample, the learned stepsize and resulting update are (locally) optimal for each sampled objective $f_{\xi^k}$, while still being suboptimal for $f$. 
This suboptimality with respect to $f$ is not captured by the naive feedbacks \eqref{eq:hoverfit}, which evaluate progress only on the selected mini-batch.

We propose two strategies to mitigate the challenge of noisy feedback: out-of-sample feedbacks with large batchsizes for {\osgd} (\Cref{sec:large-batchsize}), and full gradient update in the outerloop for {\ov} (\Cref{sec:vr}).

\section{{\sosgm} with large batchsize}
\label{sec:large-batchsize}

In this section, we consider using {\os} to tune the stepsize of {\sgd},
\[ x^{k + 1} = x^k - P_k \nabla f_{\xi^k} (x^k) . \]
A key difficulty is that naive stochastic extensions of the deterministic feedback functions can fail to converge (see \cref{counterexample}); out-of-sample feedback resolves this issue.
We define out-of-sample feedbacks and present {\osgd}, which has convergence guarantees under the gradient norm condition and large batchsize.

\subsection{\texttt{OSGM-SGD}}

Define the out-of-sample feedbacks by using an independent sample $\zeta^k$, which provides an unbiased estimator for function value decrease:
\begin{equation}
  r_{x^k, \xi^k, \zeta^k} (P_k) = \tfrac{f_{\zeta^k} (x^k - P_k \nabla f_{\xi^k} (x^k)) -
  f^{\star}}{f_{\xi^k} (x^k) - f^{\star}},\quad
  h_{x^k, \xi^k, \zeta^k} (P_k) = \tfrac{f_{\zeta^k} (x^k - P_k \nabla
  f_{\xi^k} (x^k)) - f_{\zeta^k} (x^k)}{\| \nabla f_{\xi^k} (x^k) \|^2} .
  \label{eq:def-h}
\end{equation}
With this feedback design, we define {\osgd} in \cref{alg:sosgm}.

\begin{algorithm}[ht!]
\caption{{\texttt{OSGM-SGD}}}
\label{alg:sosgm}
\begin{algorithmic}[1]
    \STATE \textbf{Input:} Initial iterate $x^0$, initial stepsize $P_0$, candidate stepsize set $\mathcal{P}$, learning rate $\eta$, 
    and feedback $\ell_{x,\xi,\zeta} \in \{ r_{x,\xi,\zeta},\, h_{x,\xi,\zeta} \}$ defined by \eqref{eq:def-h}
    \FOR{$k = 0,1,2,\dots$}
        \STATE Sample $\xi^k$ and $\zeta^k$ uniformly and independently

        \STATE 
        $x^{k+1/2}=x^k - P_k \nabla f_{\xi^k}(x^k)$

        \STATE
        $x^{k+1} =
        \begin{cases}
        \arg\min_{x \in \{ x^k,\, x^{k+1/2} \}} f_{\zeta^k}(x), & \text{if use } h_{x,\xi,\zeta}, \\
        x^{k+1/2}, & \text{otherwise.}
        \end{cases}$

        \STATE 
        $P_{k+1}= \Pi_{\mathcal{P}} [P_k - \eta \nabla \ell_{x^k,\xi^k,\zeta^k}^k(P_k)]$
    \ENDFOR
\end{algorithmic}
\end{algorithm}

In this section, we establish convergence results for {\osgd}. Specifically, we consider two settings: 1) with access to the exact function value, and 2) with a noisy function value oracle (\cref{as:bound-f}). The first setting is justified by derivative-free optimization, where an exact function value oracle is available and the gradient is estimated by finite difference. 

We use the following assumption about the stochastic gradient and function value oracles throughout this section.

\begin{assumption}[Gradient norm condition]
  \label{as:bound-g}
  The stochastic gradient oracle $\nabla f_{\xi} (x)$ satisfies, $\mathbb{E}_{\xi} [\nabla f_{\xi} (x)]={\nabla}f (x)$, and for any iterate $x \in \{ x^k \}_{k = 1, 2, \ldots}$ and any $t > 0$,
    \begin{equation}\label{eq:hp-gradient-condition} 
    \mathbb{P}\!\left\{\,\|\nabla f_{\xi}(x) - \nabla f(x)\| \ge t \,\|\nabla f(x)\| \right\}
    \le
    2 \exp\!\big(-\tfrac{t^{2}}{2\sigma_1^{2}} \big).
    \end{equation}
    where $\sigma_1 > 0$ controls the relative noise level.  
    For a mini-batch of size $b$, the constant scales as $\sigma_1 / \sqrt{b}$.
\end{assumption}

\begin{remark}
  The gradient norm condition was first proposed in {\cite{carter1991global}}
  to analyze the stochastic trust region method, and is a mainstay of subsequent
  literature {\cite{berahas2021global,byrd2012sample}}. 
  In the derivative-free optimization setting, where stochastic gradient estimates are obtained by sampling, 
  the norm condition can be shown to hold with high probability {\cite{berahas2022theoretical}}.
\end{remark}

When the exact function value is not available, we use the following assumption on the function value oracle.

\begin{assumption}
  \label{as:bound-f}
  The stochastic function value oracle $f_{\xi} (x)$ satisfies,
  $\mathbb{E}_{\xi} [f_{\xi} (x)]$=$f (x)$, and for any iterate 
$x \in \{x^k\}_{k=1,2,\ldots}$ and any $t>0$,
\begin{equation}\label{eq:hp-function-oracle}
    \mathbb{P}\!\left\{
        |f_{\xi}(x) - f(x)| 
        \;\ge\; t \, |f(x) - f^\star|
    \right\}
    \;\le\;
    2\exp\!\left( -\tfrac{t^{2}}{2\sigma_0^{2}} \right),
\end{equation}
where $\sigma_0 > 0$ controls the relative noise level in the function value. 
We use $\sigma_0 = 0$ to denote the exact function value oracle. 
For a mini-batch of size $b$, the constant scales as $\sigma_0 / \sqrt{b}$.
\end{assumption}

We provide an example where \cref{as:bound-g,as:bound-f} hold. 
\begin{example}[Least-squares with interpolation and sub-Gaussian features]
\label{ex:gradient-norm}
Suppose
\[
    f(x) = \tfrac{1}{n} \textstyle \sum_{i=1}^n f_i(x),
    \qquad
    f_i(x) = \tfrac12 (a_i^\top x - b_i)^2,
\]
and assume an interpolation setting where $a_i^\top x^\star = b_i$ for all $i$, and $a_i$ are sub-Gaussian with parameter $\sigma_a$. Let $H \assign \tfrac1n \sum_{i=1}^n a_i a_i^\top$, then \cref{as:bound-g,as:bound-f} hold with $\sigma_0 = \Theta(\sigma_a^{2}/\lambda_{\min}(H))$ and $\sigma_1 = \Theta(\sigma_a^{2}/\lambda_{\min}(H))$. 
\end{example}

\subsection{Convergence results}

We have the following convergence guarantee for {\osgd} with feedback functions \eqref{eq:def-h}. \cref{tab:theoretical-results} summarizes the theoretical results in this section.

\begin{table}[ht]
    \centering
    \resizebox{\textwidth}{!}{\begin{tabular}{ccccc}
  \toprule
  Noise oracle & Function class & Feedback & Iteration complexity & Batchsize\\
  \midrule
  \multirow{3}{*}{\makecell{\cref{as:bound-g},\\ exact function value}}  & $L$-smooth, $\mu$-strongly convex & Ratio & $\tilde{\mathcal{O}} \big( \big( 1 +
  \sigma_1^2 \big) \kappa_{\star} \log
  \tfrac{1}{\varepsilon} \big)$ & $\tilde{\mathcal{O}} (1)$\\
  \  & $L$-smooth, $\mu$-strongly convex  & Hypergradient &
  $\tilde{\mathcal{O}} \big( (1 + \sigma_1^2) \kappa \log \tfrac{1}{\varepsilon}
  \big)$ & $\tilde{\mathcal{O}} (1)$\\
  \  & $L$-smooth, convex  & Hypergradient &
  $\tilde{\mathcal{O}} \big( \tfrac{L \Delta^2 (1 + \sigma_1^2)}{\varepsilon} \big)$
  & $\tilde{\mathcal{O}} (1)$\\
  \midrule
  \multirow{2}{*}{\makecell{\cref{as:bound-g,as:bound-f}}}  & $L$-smooth, $\mu$-strongly convex &  Ratio& $\tilde{\mathcal{O}} \big( \big( 1 +
  \sigma_1^2 \big) \kappa_{\star} \log
  \tfrac{1}{\varepsilon} \big)$ & $\tilde{\mathcal{O}} (\kappa_{\star}^2)$\\
  \  & $L$-smooth, $\mu$-strongly convex & Hypergradient &
  $\tilde{\mathcal{O}} \big( (1 + \sigma_1^2)^2 \kappa \log \tfrac{1}{\varepsilon}
  \big)$ & $\tilde{\mathcal{O}} (\kappa^2)$\\
  \bottomrule
\end{tabular}}
    \caption{Summary of Theoretical Results of {\osgd} (\cref{thm:conv-subG}).}
    \label{tab:theoretical-results}
\end{table}

\begin{theorem}[Convergence of {\osgd}]
  \label{thm:conv-subG}
  Under \cref{as:bound-g,as:bound-f}, suppose we run {\osgd} for $K$
  iterations. 
  For any $\delta \in (0, 1)$, define $\gamma
  (K, \delta) = 4 \sqrt{\max \left\{ \log \tfrac{4 K}{\delta}, 1 \right\}}$ and 
  let $\tilde{\mathcal{O}}$ hides polynomial logarithmic terms of $K$ and $1/{\delta}$. 
  Then with probability $\geq 1 - \delta$ we have the following convergence results.
  \begin{enumerate}
    \item {\emph{Ratio feedback, strongly convex.}} Suppose $\sigma_1 < \tfrac{1}{2 \gamma (K, \delta)}$ and $\sigma_0 =\tilde{\mathcal{O}}\large( \tfrac{1}{L^2 D^2 \kappa_{\star}} \large)$.
    
    \[ \tfrac{f (x^{K}) - f (x^{\star})}{f (x^0) - f (x^{\star})} \leq
       \big( 1 - \tfrac{1}{2 \left( 1 + \gamma
       (K, \delta)^2 \sigma_1^2  \right) \kappa_{\star}} +
       \tilde{\mathcal{O}} \big( \tfrac{1}{\sqrt{K}} \big) \big)^K . \]
    \item {\emph{Hypergradient feedback, strongly convex.}} Suppose $\sigma_1 =\tilde{\mathcal{O}} \left( \tfrac{1}{L^2 D^2} \right)$ and
    $\sigma_0 =\tilde{\mathcal{O}} \left( \tfrac{1}{L^2 D^2 \kappa} \right)$.
    \[ \tfrac{f (x^{K}) - f (x^{\star})}{f (x^0) - f (x^{\star})} \leq
       \big( 1 - \tfrac{1}{2 (1 + \sigma_1^2 ) \gamma (K, \delta)^2 \kappa} +
       \tilde{\mathcal{O}} \big( \tfrac{1}{\sqrt{K}} \big) \big)^K . \]
    \item {\emph{Hypergradient feedback, convex.}} Suppose $\sigma_1 =\tilde{\mathcal{O}} \left( \tfrac{1}{L^2 D^2} \right)$ and $\sigma_0 = 0$.
    \[ f (x^{K}) - f (x^{\star}) \leq \min \left\{ \tfrac{\Delta^2}{\max
       \big\{ K \big( \frac{1}{4 L (1 + \gamma (K, \delta)^2 \sigma_1^2)} -
       \tilde{\mathcal{O}} \big( \tfrac{1}{\sqrt{K}} \big) \big), 0 \big\}}, f (x^0) - f^{\star}
       \right\} . \]
  \end{enumerate}
\end{theorem}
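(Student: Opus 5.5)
We follow the deterministic {\os} template from \Cref{sec:sosgm}: write the contraction as a telescoping product, pass to an average of per-step feedbacks by the arithmetic--geometric mean inequality, and control that average by online-gradient-descent regret against a fixed comparator $\hat P\in\mathcal{P}$. The stochastic work is to show that, on a high-probability event, the realized out-of-sample feedbacks \eqref{eq:def-h} are (i) convex and Lipschitz in $P$ with constants of order $LD$, so projected OGD with $\eta\asymp D/(\text{Lipschitz}\cdot\sqrt K)$ has regret $\tilde{\mathcal O}(1/\sqrt K)$; and (ii) comparable to the deterministic quantities $r_{x^k}$, $h_{x^k}$ at the fixed comparator.

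\textbf{Step 1: the good event.} Take a union bound over $k\le K$ in \cref{as:bound-g,as:bound-f}, using $t=\gamma(K,\delta)/4\cdot\sqrt2$. On an event of probability $\ge 1-\delta$, for every $k$,
\[
\|\nabla f_{\xi^k}(x^k)-\nabla f(x^k)\|\le\gamma\sigma_1\|\nabla f(x^k)\|,
\]
and $|f_{\zeta^k}(y)-f(y)|\le\gamma\sigma_0(f(y)-f^\star)$ for the relevant points $y\in\{x^k,x^{k+1/2}\}$. The same bound applies to $f_{\xi^k}(x^k)$ in the denominator of the ratio. The condition $\gamma\sigma_1<1/2$ keeps $\|\nabla f_{\xi^k}(x^k)\|$ within a constant factor of $\|\nabla f(x^k)\|$.

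\textbf{Step 2: regret.} Each $P\mapsto f_{\zeta^k}(x^k-Pg^k)$ is convex and $L$-smooth along the direction $g^k$. After dividing by $\|g^k\|^2$, the hypergradient feedback therefore has gradient norm $\mathcal O(LD+1)$. The ratio feedback is handled the same way using $\|g\|^2\lesssim L(f-f^\star)$ together with Step 1, so $\sum_k\ell_k(P_k)\le\sum_k\ell_k(\hat P)+\tilde{\mathcal O}(\sqrt K)$.

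\textbf{Step 3: comparator bounds.}
\begin{itemize}
\item \emph{Ratio feedback.} Choose $\hat P=P_\star$. Expand $f(x-P_\star g)$ with $g=\nabla f+e$ and $\|e\|\le\gamma\sigma_1\|\nabla f\|$, giving $r(P_\star)\le 1-\tfrac{1}{2(1+\gamma^2\sigma_1^2)\kappa_\star}$. Replacing $f$ by $f_{\zeta}$ costs $\gamma\sigma_0$ times a factor of order $L^2D^2$. The assumption on $\sigma_0$ makes this cost at most a fraction of $1/\kappa_\star$.
\item \emph{Hypergradient feedback.} Take $\hat P=\tfrac{1}{L(1+\gamma^2\sigma_1^2)}I$, so that $h(\hat P)\le -c/(L(1+\sigma_1^2)\gamma^2)$.
\end{itemize}

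\textbf{Step 4: convert back to progress.} For the ratio feedback, the realized ratio $\frac{f(x^{k+1})-f^\star}{f(x^k)-f^\star}$ is within a factor $(1\pm\gamma\sigma_0)$ of the feedback value. The product telescopes, and AM--GM yields claim (i).

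For the hypergradient feedback, the null step guarantees $f_{\zeta}(x^{k+1})\le f_\zeta(x^k)$. Hence
\[
f(x^{k+1})-f(x^k)\le \min\{h_k(P_k),0\}\,\|g^k\|^2+\text{noise}.
\]
Combine this with $\|g^k\|^2\ge\tfrac14\|\nabla f\|^2\ge\tfrac{\mu}{2}(f-f^\star)$, then pass the average of the $h_k$ through regret to the comparator bound. This gives a per-step contraction, and AM--GM on the ratios gives (ii); the $\sigma_0$ condition again absorbs the noise.

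For (iii), use $\sigma_0=0$ with monotonicity, and $\|\nabla f(x^k)\|\ge (f(x^k)-f^\star)/\Delta$, where $\Delta$ bounds the distance to $\mathcal X^\star$ along the monotone trajectory. Summing $1/(f(x^{k+1})-f^\star)-1/(f(x^k)-f^\star)\ge -h_k/\Delta^2$ and applying regret gives the $\Delta^2/(K(\cdot))$ bound.

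\textbf{Main obstacle.} The delicate part is Step 4 for the hypergradient feedback. The regret inequality controls $\sum_k h_k$, whereas the progress is $\sum_k h_k\|g^k\|^2$ with varying weights, so we need a sign argument. The null step makes every term nonpositive, which allows replacing each $\|g^k\|^2$ by its lower bound $\tfrac{\mu}{2}(f-f^\star)$ uniformly before averaging. I would first try to apply the AM--GM step to $1+h_k\mu/2$ directly in place of the raw ratios.
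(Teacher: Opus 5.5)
Your route differs from the paper's, and for (ii) and (iii) it works; the gap is in part (i).

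\textbf{How the two routes differ.} The paper never conditions on a union-bound event.
\begin{enumerate}
\item It first proves the result for the deterministically bounded oracles of \cref{as:bound-g-dtm,as:bound-f-dtm}. It runs OGD on the realized feedback but measures progress through the proxies $\hat r_{x,\xi}$ (exact $f$) and $\hat h_{x,\xi,\zeta}$ (normalized by $\|\nabla f(x^k)\|^2$).
\item It transfers regret to the proxies by convexity at $P_k$, and bounds the zero-mean terms with Azuma.
\item It then passes to sub-Gaussian oracles through \cref{lem:bdd-ltail}. That lemma supplies a bounded surrogate oracle with the \emph{same mean}. Conditioning on a union-bound event would destroy the zero-mean increments that Azuma needs.
\end{enumerate}
Your argument is entirely pathwise, so a plain union bound is legitimate for you.

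\textbf{Parts (ii) and (iii).} For the hypergradient, $h_k(\alpha I)$ is controlled by $L$-smoothness of $f_{\zeta^k}$ together with gradient noise at $x^k$. Add $\nabla f_{\zeta^k}(x^k)$ to your good event; the Lipschitz bounds in Step 2 need it too. The null-step identity $f_{\zeta^k}(x^{k+1})-f_{\zeta^k}(x^k)=\min\{h_k(P_k),0\}\|g^k\|^2$ should be combined with $\|g^k\|\ge(1-\gamma\sigma_1)\|\nabla f(x^k)\|$, not the crude $\tfrac14$, which loses a factor $2$ against the constant in (iii). This avoids the paper's linear-in-$K$ transfer term $3\sigma_1 D(LD+\cdots)K$. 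On the gradient side you then only need $\gamma\sigma_1$ below an absolute constant.

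\textbf{The gap: Step 3 for the ratio feedback.} Your regret compares against $r_k(P_\star)=\tfrac{f_{\zeta^k}(x^k-P_\star g^k)-f^\star}{f_{\xi^k}(x^k)-f^\star}$. To extract the $\kappa_\star$ rate you replace $f_{\zeta^k}$ by $f$ at $y_k=x^k-P_\star g^k$. That point is not an iterate and not among your Step 1 points, and \cref{as:bound-f} gives no control there. Smoothness of $f_{\zeta^k}$ cannot substitute, because the $\kappa_\star$ gain rests on $\nabla^2 f\preceq P^{-1}$, which is assumed for $f$ only.

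The paper never queries an oracle at $y_k$. Its comparator lives on $\hat r_{x^k,\xi^k}$, and the price is the transfer term $\langle\nabla\hat r_k(P_k)-\nabla r_k(P_k),P_k-P_\star\rangle$. The gradient-noise part $Z^{(1)}_k$ of that term has size of order $\gamma\sigma_1 LD(LD+2)$ per step. Summed pathwise, as in your framework, it would force $\sigma_1\lesssim 1/(\gamma L^2D^2\kappa_\star)$, far stronger than the hypothesis $\sigma_1<1/(2\gamma)$. The paper gets $O(\sqrt K)$ from Azuma, and that is exactly where the mean-preserving reduction, rather than a union bound, is indispensable.

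You have two ways to close (i):
\begin{enumerate}
\item Add the hypothesis that the function-value bound holds at points fixed before $\zeta^k$ is drawn. This is true in \cref{ex:gradient-norm}, and one more event per step in your union bound then finishes (i).
\item Adopt the proxy $\hat r_{x^k,\xi^k}$, Azuma, and \cref{lem:bdd-ltail}.
\end{enumerate}

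\textbf{A second problem in the same step.} You must compare $\tfrac12\|e\|_{P_\star}^2$ with $\|\nabla f(x^k)\|_{P_\star}^2$. A Euclidean bound $\|e\|\le\gamma\sigma_1\|\nabla f\|$ does this only up to $\lambda_{\max}(P_\star)/\lambda_{\min}(P_\star)$. The Hessian sandwich gives only $\tfrac{1}{\kappa_\star L}I\preceq P\preceq\tfrac1\mu I$; note that \eqref{eq:P-star-bound} has $L$ and $\mu$ interchanged. So this factor can reach $\kappa\kappa_\star$. It is already $\kappa$ for $P=(\nabla^2 f)^{-1}$ on a quadratic with the noise along the flattest direction. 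Hence your constant $1-\tfrac{1}{2(1+\gamma^2\sigma_1^2)\kappa_\star}$ is not justified for non-scalar $P_\star$ as written.
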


\cref{thm:conv-subG} suggests that when the stochastic noise is small, the asymptotic convergence of {\osgd} is similar to the deterministic setting. 

The requirement on the noise level $\sigma_0, \sigma_1$ can always be satisfied by using a sufficiently large batchsize. We distinguish the following two settings, when $\sigma_0=0$ and $\sigma_0>0$. 
If the function value oracle is exact, $\sigma_0=0$, 
\cref{thm:conv-subG} guarantees linear convergence as long as $\sigma_1 = \mathcal{O}(\tfrac{1}{L^2 D^2})$ for both ratio and hypergradient feedback. 
Therefore, the batchsize needed is $\tilde{\mathcal{O}}(1)$, as $D = \mathcal{O}(1/L)$ by \cref{as:diam}. 
On the other hand, for an approximate function value oracle $\sigma_0>0$,
the theorem guarantees linear convergence for 
ratio feedback with a batchsize $\tilde{\mathcal{O}}(\kappa_\star^2)$, and for hypergradient feedback with a batchsize $\tilde{\mathcal{O}}(\kappa^2)$. These results are summarized in \cref{tab:theoretical-results}.

Large batchsizes are increasingly practical given modern computational constraints.
It is typical to select a batchsize based on the number of processors available, 
as the time required is the same as needed for a smaller batchsize.
Hence modern hardware accelerators like GPUs, with hundreds or thousands of parallel processors, reward algorithms that can make efficient use of large batchsizes.

We note that access to an exact function value oracle improves several aspects of the convergence theory. First, it reduces the batchsize requirement from $\tilde{\mathcal{O}}(\kappa^2)$ to $\tilde{\mathcal{O}}(1)$, as 1) with exact function values, the ratio feedback can be evaluated exactly, and 2) the stochastic online gradient $\tfrac{\nabla f_{\zeta}(x - P \nabla f_{\xi} (x)) \nabla f(x)^\top}{f (x) - f^{\star}}$ is an unbiased gradient estimator of the deterministic ratio feedback $\tfrac{f(x - P \nabla f_{\xi} (x)) - f^\star}{f (x) - f^{\star}}$. Moreover, an exact function value oracle improves the regret bound for ratio feedback from linear to sublinear, as shown by \cref{lem:regret-exact-func}. Finally, for the hypergradient feedback, an exact function value oracle makes it possible to ensure descent and prevent divergence.

We can combine these considerations to understand when {\osgd} offers an improved sample complexity compared to {\sgd}. With an exact function value oracle, 
the sample complexity of {\osgd} with ratio feedback is $\tilde{\mathcal{O}}((1+\sigma_1^2)\kappa_\star \log \tfrac{1}{\epsilon})$, which improves the result of {\sgd} from $\kappa$ to a much smaller number $\kappa_\star$. The sample complexity of hypergradient feedback matches the result of {\sgd}. Without an exact function value oracle, the sample complexity of ratio feedback is $\tilde{\mathcal{O}}((1+\sigma_1^2)\kappa_\star^3 \log \tfrac{1}{\epsilon})$, which improves on {\sgd} when $\kappa_\star^3<\kappa$. The sample complexity of hypergradient feedback is $\tilde{\mathcal{O}}((1+\sigma_1^2)\kappa^3 \log \tfrac{1}{\epsilon})$, which is worse than the result of {\sgd}. However, our empirical experiments reveal that {\osgd} outperforms {\sgd} and its variants with the same batchsize.

\section{{\sosgm} with variance reduction}
\label{sec:vr}

This section applies {\os} at the outer-loop level of {\svrg} to learn and adapt the matrix stepsize across epochs.
Because the full gradient is available at each outer iterate, the feedback for stepsize selection is deterministic, and convergence analysis does not require the noise oracle assumed in \cref{sec:large-batchsize}.

We begin by recalling the {\svrg} method and a practically effective extension that uses heavy-ball momentum (\cref{alg:osgm-svrg} with {\os} learning rate $\eta=0$ and constant momentum $\beta_k=\beta$). {\svrg} uses a double-loop structure: at the start of outer epoch $k$,
it computes the full gradient at a snapshot point $\tilde{x}^k$,
and then performs $m$ inner iterations using VR estimators.
We also allow a heavy-ball momentum term parameterized by $\beta$ inside the inner loop.
When $\beta=0$, the algorithm is {\svrg}. For $\beta>0$, we will call the algorithm {\sh}.

{\svrg} with acceleration has appeared in the literature \cite{nitanda2014stochastic,lin2015universal,allen2018katyusha,shang2017fast}, but not with heavy-ball momentum.
Experimentally, Nesterov acceleration and heavy-ball momentum perform about equally well,
but heavy-ball momentum allows for theoretical guarantees in the context of {\os} that
are currently unknown for Nesterov momentum \cite{chu2025gradient}.

\subsection{\texttt{OSGM-SVRG}}

In this section, we develop a unified framework {\ov} that uses {\os} at each outer iteration of {\svrg}
to learn and improve the {\svrg} stepsize. The {\ov} framework is presented in \cref{alg:osgm-svrg}.
Concretely, at outer iteration $k$, given the deterministic gradient $\nabla f(\tilde{x}^k)$ at the snapshot iterate, 
{\ov} uses either the ratio or hypergradient feedback with regularization $\rho>0$,
\begin{equation}\label{eq:ov-feedback}
  r^{\rho}_{\tilde{x}^k} (P) = r_{\tilde{x}^k} (P) + \tfrac{\rho}{2} \| P \|_F^2\quad \text{and} \quad h^{\rho}_{\tilde{x}^k} (P) = h_{\tilde{x}^k} (P) + \tfrac{\rho}{2} \| P \|_F^2,
\end{equation}
where $r_{\tilde{x}^k}$ and $h_{\tilde{x}^k}$ are defined in \eqref{eq:dtm-feedback}. 
The regularization delivers an implicit bound on the size of the stepsize (\cref{prop:vr-stepsize-bound})
that will be important for the theoretical guarantees that follow.
With this feedback design, apply online gradient descent to update the stepsize $P_k$,
\begin{equation}\label{eq:svrg-P-update}
  P_k = \Pi_{\mathcal{P}}
        [
            (1 - \eta\rho)P_{k-1}
            + \eta \nabla \ell_{\tilde{x}^k} (P_{k-1})
        ], \quad \text{where} \quad \ell_{\tilde{x}^k} \in \{ r_{\tilde{x}^k}, h_{\tilde{x}^k} \}.
\end{equation}
In the inner-loop of {\svrg} (Line 9 of \cref{alg:osgm-svrg}), 
{\ov} moderates the learned stepsize with a decay factor $c \leq 1$ to ensure convergence in the context of stochastic gradient updates. 
We discuss the choice of decay factor further in \cref{thm:osgm-svrg}. 

\begin{algorithm}[ht!]
\caption{\texttt{OSGM-SVRG}}
\label{alg:osgm-svrg}
\begin{algorithmic}[1]
\STATE \textbf{Input:} Initial iterate $\tilde{x}^0$, and stepsize $P_0$, epoch length $m$, {\os} learning rate $\eta$, decay factor $c$, candidate stepsize set $\mathcal{P}$, momentum sequence $\{\beta_k\}$, regularization $\rho$, and deterministic feedback function $\ell_{\tilde{x}^k}^{\rho} \in \{ r_{\tilde{x}^k}^{\rho}, h_{\tilde{x}^k}^{\rho} \}$ defined by \eqref{eq:ov-feedback}.
\FOR{$k=0,1,2,\dots$}
    \STATE Compute snapshot gradient $\nabla f(\tilde{x}^k) = \tfrac1n\sum_{i=1}^n \nabla f_i(\tilde{x}^k)$
    
    \STATE Set $x^0 = \tilde{x}^k$
    \FOR{$t=0,\dots,m-1$}
        \STATE Sample $\xi^t$ uniformly
        \STATE
        $
            g_t = \nabla f_{\xi^t}(x^t) - \nabla f_{\xi^t}(\tilde{x}^k) + \nabla f(\tilde{x}^k)
        $
        \STATE 
        $
            x^{t+1}
            =
            x^t
            - c P_k g_t
            + \beta_k (x^t - x^{t-1})
        $
    \ENDFOR
    \STATE
    $
    P_{k+1} = \Pi_{\mathcal{P}}
        [
            P_{k}
            - \eta \nabla \ell_{\tilde{x}^{k}}^{\rho} (P_{k})
        ].
    $
    \STATE Choose $\tilde{x}^{k+1}$ uniformly from $\{x^0,\dots,x^m\}$
\ENDFOR
\end{algorithmic}
\end{algorithm}

\begin{remark}
  {\ov} is a general framework and can be reduced to {\svrg} and {\sh} by setting learning rate $\eta=0$, constant momentum $\beta_k=\beta$. The choice of regularization $\rho$ is justified by \cref{prop:vr-stepsize-bound} for scalar and matrix stepsize.
\end{remark}

\subsection{Convergence analysis}

In this section, we establish a generic convergence result for {\ov}. First, we bound the potential decrease on expectation in each inner loop update. Within epoch $k$, define the filtration $\mathcal{F}_t$, the $\sigma$-algebra generated by all randomness up to and including step $t$. We write
\(
    \mathbb{E}_t[\cdot]
    :=
    \mathbb{E}\!\left[\,\cdot \,\middle|\, \mathcal{F}_t \right]
\)
for the conditional expectation with respect to this filtration.

\begin{lemma}[Potential reduction]
  \label{lem:vr-pot}
  For any epoch $k$ and any inner step $t$, conditional on the filtration $\mathcal{F}_t$, the expected potential satisfies the following bounds:
  \begin{enumerate}
    \item \emph{No momentum.} Suppose $\beta_k = 0$ and assume stepsize
    $\underline{\alpha} I \preceq P_k \preceq \bar{\alpha} I$. Define $V^t \assign f (x^t) - f^{\star}$. Then
    \begin{equation}
      \mathbb{E}_{t} [V^{t + 1}] \leq V^t - \left( 2 c \underline{\alpha} \mu - 2 c^2 \bar{\alpha}^2L^2 \right) (f (x^t) - f^{\star}) + 2 c^2 \bar{\alpha}^2 L^2 (f
    (\tilde{x}^k) - f^{\star}). \label{pot-svrg}
    \end{equation}
    For a scalar stepsize $P_k = \alpha I$, the upper and lower bounds $\underline{\alpha} = \bar{\alpha} = \alpha$ match.
    
    \item \emph{Bounded momentum.} Suppose $0 < \beta_k \leq  \bar{\beta}$ and assume a scalar stepsize $P_k = \alpha I$. Define $V^t \assign f(x^t) - f^{\star} + \tfrac{L}{2} \| x^t - x^{t - 1} \|^2$. Then
    \begin{align}
      \begin{split}
        \mathbb{E}_{t} [V^{t + 1}]  \leq{} & V^t - \left( \tfrac{1}{2} - \bar{\beta}^2 \right) L \| x^t - x^{t-1} \|^2 - (c\alpha - 2 c^2 \alpha^2 L \kappa) \| \nabla f (x^t) \|^2\\
      &+ (1 - 2 c \alpha L) \bar{\beta} \langle \nabla f (x^t), x^t - x^{t-1} \rangle + 4
      c^2 \alpha^2 L^2 (f (\tilde{x}^k) - f^{\star}).
      \end{split}
      \label{pot-hbm}
    \end{align}
  \end{enumerate}
\end{lemma}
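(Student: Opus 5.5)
The plan is to follow the standard one-step SVRG analysis: apply $L$-smoothness of $f$ to the update $x^{t+1}=x^t-cP_kg_t+\beta_k(x^t-x^{t-1})$, take $\mathbb{E}_t$, and use unbiasedness $\mathbb{E}_t[g_t]=\nabla f(x^t)$ together with the usual variance bound for the SVRG estimator. For that variance bound I will use \cref{as:smooth} in the form
\[\mathbb{E}_t\|g_t\|^2\le 2\mathbb{E}\|\nabla f_{\xi}(x^t)-\nabla f_{\xi}(x^\star)\|^2+2\mathbb{E}\|\nabla f_{\xi}(\tilde x^k)-\nabla f_{\xi}(x^\star)-\nabla f(\tilde x^k)\|^2 .\]
By $L$-smoothness alone, each term is at most $L^2\|x-x^\star\|^2$. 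I will then convert distances to function gaps. Lemma item (i) is stated with constant $\mu$ multiplying $f-f^\star$ and $L^2$ multiplying gaps, so I expect the cleanest route to be $\|\nabla f_i(x)-\nabla f_i(x^\star)\|^2\le 2L(f_i(x)-f_i(x^\star)-\langle\nabla f_i(x^\star),x-x^\star\rangle)$. Averaging gives $\mathbb{E}_t\|g_t\|^2\le 4L(f(x^t)-f^\star)+4L(f(\tilde x^k)-f^\star)$. If the exact constants need matching, I will instead use $\|x-x^\star\|^2\le \frac{2}{\mu}(f(x)-f^\star)$ and absorb accordingly.

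\emph{Case (i), no momentum.} Take $V^t=f(x^t)-f^\star$. Smoothness gives
\[\mathbb{E}_t V^{t+1}\le V^t-c\langle\nabla f(x^t),P_k\nabla f(x^t)\rangle+\tfrac{L}{2}c^2\bar\alpha^2\mathbb{E}_t\|g_t\|^2 .\]
I will bound the linear term by $-c\underline\alpha\|\nabla f(x^t)\|^2\le -2c\underline\alpha\mu(f(x^t)-f^\star)$ using the PL inequality implied by strong convexity. Inserting the variance bound then yields the claimed coefficients $2c^2\bar\alpha^2L^2$ on both $f(x^t)-f^\star$ and $f(\tilde x^k)-f^\star$, after a crude factor adjustment (e.g.\ bounding $L\cdot L\|x-x^\star\|^2$ via $\frac{2}{\mu}$ and absorbing the $\kappa$, or using the $4L$ version with $L\ge\mu$). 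This bookkeeping of constants is routine but needs care to land exactly on \eqref{pot-svrg}.

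\emph{Case (ii), momentum.} Write $d^t=x^t-x^{t-1}$ and $x^{t+1}-x^t=-c\alpha g_t+\beta_kd^t$. Smoothness gives
\[f(x^{t+1})\le f(x^t)+\langle\nabla f(x^t),-c\alpha g_t+\beta_kd^t\rangle+\tfrac L2\|{-c\alpha g_t}+\beta_kd^t\|^2 .\]
The potential adds $\frac L2\|x^{t+1}-x^t\|^2$, so the quadratic term appears with total weight $L$. Expanding $L\,\mathbb{E}_t\|{-c\alpha g_t}+\beta_kd^t\|^2$ gives
\[Lc^2\alpha^2\mathbb{E}_t\|g_t\|^2-2Lc\alpha\beta_k\langle\nabla f(x^t),d^t\rangle+L\beta_k^2\|d^t\|^2 .\]
Combining the cross terms gives $(1-2c\alpha L)\beta_k\langle\nabla f,d^t\rangle$, which matches the statement with $\bar\beta$. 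Subtracting the old $\frac L2\|d^t\|^2$ gives $-(\frac12-\beta_k^2)L\|d^t\|^2\le-(\frac12-\bar\beta^2)L\|d^t\|^2$. For the variance, I will use $\mathbb{E}_t\|g_t\|^2\le 2\|\nabla f(x^t)\|^2\cdot(\text{factor})+4L(f(\tilde x^k)-f^\star)$. The bound $\|\nabla f_i(x^t)-\nabla f_i(x^\star)\|^2$ expressed via $L^2\|x^t-x^\star\|^2\le \frac{L^2}{\mu^2}\|\nabla f(x^t)\|^2$ produces the $2c^2\alpha^2L\kappa\|\nabla f\|^2$ term, and the snapshot part gives $4c^2\alpha^2L^2(f(\tilde x^k)-f^\star)$ after using $\|\tilde x-x^\star\|^2\le\frac{2}{\mu}\cdot$gap, or directly the $4L$ form.

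The main obstacle is the sign of the cross term $\beta_k\langle\nabla f,d^t\rangle$, since $(1-2c\alpha L)$ times $\beta_k$ can only be replaced by $\bar\beta$ when that inner product is nonnegative. I would keep $\beta_k$ throughout and only note that the statement's $\bar\beta$ version is intended (or holds when $\beta_k=\bar\beta$ is constant). Otherwise, I would absorb this term later via Young's inequality against the $\|d^t\|^2$ and $\|\nabla f\|^2$ terms.
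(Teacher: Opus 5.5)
Your route is the paper's: smoothness of $f$ with $\mathbb{E}_t[g_t]=\nabla f(x^t)$, the co-coercivity bound $\mathbb{E}_t\|g_t\|^2\le 4L(f(x^t)-f^\star)+4L(f(\tilde x^k)-f^\star)$, and, in case (ii), expanding $\tfrac{L}{2}\|x^{t+1}-x^t\|^2$ to merge the cross terms into $(1-2c\alpha L)\beta_k\langle\nabla f(x^t),x^t-x^{t-1}\rangle$ and obtain $-(\tfrac12-\beta_k^2)L\|x^t-x^{t-1}\|^2$.

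The step that fails as you describe it is the $\|\nabla f(x^t)\|^2$ coefficient in case (ii). You bound $\|\nabla f_i(x^t)-\nabla f_i(x^\star)\|^2\le L^2\|x^t-x^\star\|^2\le\kappa^2\|\nabla f(x^t)\|^2$. Multiplying by the weight $Lc^2\alpha^2$ then gives $2c^2\alpha^2L\kappa^2\|\nabla f(x^t)\|^2$, not the stated $2c^2\alpha^2L\kappa\|\nabla f(x^t)\|^2$. Every distance-based alternative you list loses the same factor $\kappa\ge 1$, for the snapshot term and in case (i). This includes bounding $L^2\|x-x^\star\|^2$ via $\|x-x^\star\|^2\le\tfrac{2}{\mu}(f(x)-f^\star)$. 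Since $\kappa\ge1$, that factor cannot be ``absorbed'' into the stated constants.

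The fix is to use only the $4L$ co-coercivity bound from your first paragraph. It is justified by convexity plus $L$-smoothness of each $f_i$, together with $\mathbb{E}\|X-\mathbb{E}X\|^2\le\mathbb{E}\|X\|^2$ to drop $\nabla f(\tilde x^k)$. In case (i) it gives exactly $\tfrac{L}{2}c^2\bar\alpha^2\cdot 4L=2c^2\bar\alpha^2L^2$ on both gaps, with no ``factor adjustment'' needed. In case (ii) it gives $4c^2\alpha^2L^2\bigl[(f(x^t)-f^\star)+(f(\tilde x^k)-f^\star)\bigr]$. Only then apply the PL inequality $f(x^t)-f^\star\le\tfrac{1}{2\mu}\|\nabla f(x^t)\|^2$ to the first part, which yields $2c^2\alpha^2L\kappa\|\nabla f(x^t)\|^2$.

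Your caveat about the cross term is correct and matches what the paper actually derives. Its proof ends with $(1-2c\alpha L)\beta_k\langle\nabla f(x^t),x^t-x^{t-1}\rangle$. Only the $\|x^t-x^{t-1}\|^2$ coefficient can legitimately be weakened from $\beta_k^2$ to $\bar\beta^2$. Replacing $\beta_k$ by $\bar\beta$ in the inner-product term would need a sign condition that is not available. This is harmless downstream: the proofs of \cref{prop:svrg-conv} and \cref{thm:osgm-svrg}(iii) bound that term in absolute value anyway. So you should state and prove (ii) with $\beta_k$ in the cross term.
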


\begin{remark}
  The potential function defined in the bounded momentum setting (ii) is inspired by {\cite{kulakova2018non}}, which introduces potential function $f (x^t) - f^{\star} + \tfrac{1 - \alpha L}{2 \alpha} \| x^t - x^{t - 1} \|^2$ to analyze the convergence of deterministic {\hbm}. Given the bound on the potential reduction \eqref{pot-hbm}, we can choose the stepsize $\alpha$ and the upper bound of momentum $\bar \beta$ appropriately to guarantee a strict decrease of the potential.
\end{remark}

From \cref{lem:vr-pot}, we can directly derive the convergence of {\svrg} and {\sh}.

\begin{proposition}[Convergence of {\svrg} and {\sh}]
  \label{prop:svrg-conv} Let decay factor $c=1$. Use constant scalar stepsize $P_k=\alpha I$ and constant momentum $\beta_k=\beta$ in \cref{alg:osgm-svrg}.
  \begin{enumerate}
    \item \emph{{\svrg}.} Suppose momentum $\beta = 0$, and stepsize $\alpha$ satisfies $\alpha <
    \tfrac{1}{\kappa L}$. Then
    \[ \mathbb{E} [f (\tilde{x}^{k}) - f^{\star}] \leq \left(\tfrac{1 + 2 m \alpha^2 L^2}{2 m (\alpha \mu - \alpha^2 L^2)}\right)^k (f(\tilde{x}^0) - f^{\star}). \]
    \item \emph{{\sh}.} Suppose momentum $\bar{\beta} \leq \sqrt{\alpha L - 2 \alpha^2
  L^2 \kappa}$, and stepsize $\alpha
  < \tfrac{1}{2 \kappa L}$. Then
    \[ \mathbb{E} [f (\tilde{x}^{k}) - f^{\star}] \leq \left( \tfrac{1 + 4 m \alpha^2 L^2}{m( 2 \alpha \mu - 4
    \alpha^2 L^2 - {2 \bar{\beta}^2}/{\kappa} )} \right)^k (f(\tilde{x}^0) - f^{\star}). \]
    If $\alpha = \tfrac{1}{8 \kappa
  L}$ and $\bar{\beta} = \tfrac{1}{\sqrt{32 \kappa}}$, the contraction ratio
  is $\tfrac{8 \kappa^2}{m} + \tfrac{1}{2}$.
  \end{enumerate}
\end{proposition}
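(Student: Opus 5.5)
We plan to derive both parts of \cref{prop:svrg-conv} from \cref{lem:vr-pot} by the classical Johnson--Zhang telescoping argument over one epoch. Since $\tilde{x}^{k+1}$ is drawn uniformly from $\{x^0,\dots,x^m\}$, we have $\mathbb{E}[f(\tilde{x}^{k+1})-f^\star]=\tfrac{1}{m+1}\sum_{t=0}^{m}\mathbb{E}[f(x^t)-f^\star]$. It therefore suffices to bound the averaged suboptimality along the inner trajectory by a multiple of $f(\tilde{x}^k)-f^\star$. Then we take total expectation and iterate over $k$.

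\textbf{Part (i), no momentum.} Set $c=1$ and $\underline{\alpha}=\bar{\alpha}=\alpha$ in \eqref{pot-svrg}, and take total expectation. Summing over $t=0,\dots,m-1$ telescopes the potential and gives
\[
\mathbb{E}[V^m] + 2(\alpha\mu-\alpha^2L^2)\textstyle\sum_{t=0}^{m-1}\mathbb{E}[f(x^t)-f^\star] \le V^0 + 2m\alpha^2L^2 (f(\tilde{x}^k)-f^\star).
\]
Three facts finish the bound. First, $V^0=f(\tilde{x}^k)-f^\star$ because $x^0=\tilde{x}^k$. Second, $\mathbb{E}[V^m]\ge 0$, so that term can be dropped. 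Third, the condition $\alpha<1/(\kappa L)=\mu/L^2$ makes $\alpha\mu-\alpha^2L^2>0$. Dividing gives the average of $f(x^t)-f^\star$ over $t<m$ bounded by $\tfrac{1+2m\alpha^2L^2}{2m(\alpha\mu-\alpha^2L^2)}(f(\tilde{x}^k)-f^\star)$. This matches the claimed ratio, up to the bookkeeping of whether the average runs over $m$ or $m+1$ points. To handle that bookkeeping, I would either treat the snapshot as uniform over $\{x^0,\dots,x^{m-1}\}$, or absorb the extra term, since keeping the dropped $\mathbb{E}[V^m]$ term only helps.

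\textbf{Part (ii), momentum.} Set $c=1$ and take total expectation in \eqref{pot-hbm}. The extra work is controlling the cross term $(1-2\alpha L)\bar\beta\langle\nabla f(x^t),x^t-x^{t-1}\rangle$. I would apply Young's inequality, $\bar\beta\langle g,d\rangle\le \tfrac{\bar\beta^2}{2\lambda}\|g\|^2+\tfrac{\lambda}{2}\|d\|^2$, with $\lambda$ chosen on the scale of $L$. The $\|d\|^2$ part is then absorbed by the negative term $(\tfrac12-\bar\beta^2)L\|x^t-x^{t-1}\|^2$, and the $\|g\|^2$ part is absorbed into the $\|\nabla f\|^2$ coefficient. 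The hypothesis $\bar\beta^2\le \alpha L-2\alpha^2L^2\kappa$ is exactly what should keep the net $\|\nabla f(x^t)\|^2$ coefficient nonnegative, of size roughly $\alpha-2\alpha^2L\kappa-\bar\beta^2/L$. I then convert the gradient term to suboptimality via strong convexity, $\|\nabla f(x^t)\|^2\ge 2\mu(f(x^t)-f^\star)$. This produces a per-step decrease coefficient $2\alpha\mu-4\alpha^2L^2-2\bar\beta^2/\kappa$ on $f(x^t)-f^\star$, where I use $\mu\kappa=L$ to write $\alpha^2 L\kappa\mu=\alpha^2L^2$ and $\bar\beta^2\mu/L=\bar\beta^2/\kappa$. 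Telescoping as in part (i) then works provided $V^0=f(\tilde{x}^k)-f^\star$. That requires the convention $x^{-1}=x^0$ at each epoch start, which I would state explicitly. The resulting bound is $\tfrac{1+4m\alpha^2L^2}{m(2\alpha\mu-4\alpha^2L^2-2\bar\beta^2/\kappa)}$.

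\textbf{Specific parameters.} For the numerical claim, substitute $\alpha=\tfrac{1}{8\kappa L}$ and $\bar\beta^2=\tfrac{1}{32\kappa}$. Then $2\alpha\mu=\tfrac{1}{4\kappa^2}$, $4\alpha^2L^2=\tfrac{1}{16\kappa^2}$ and $2\bar\beta^2/\kappa=\tfrac{1}{16\kappa^2}$, so the denominator is $\tfrac{m}{8\kappa^2}$. The numerator is $1+\tfrac{m}{16\kappa^2}$. The ratio is therefore $\tfrac{8\kappa^2}{m}+\tfrac12$, and we can check that it satisfies the stated constraint on $\bar\beta$.

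\textbf{Main obstacle.} The expected sticking point is the Young's-inequality balancing in part (ii). The constants must line up so that the absorbed cross term costs exactly $2\bar\beta^2/\kappa$ in the contraction, and $(\tfrac12-\bar\beta^2)L$ must remain large enough to absorb the $\|x^t-x^{t-1}\|^2$ part. I would first try $\lambda=L/2$ and use the bound $1-2\alpha L\le 1$. If the constants are off by a factor, I would adjust $\lambda$ and possibly use the slack in the stepsize condition $\alpha<1/(2\kappa L)$.
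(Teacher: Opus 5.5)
Your proposal is correct and matches the paper's proof step for step. Both arguments telescope the potential-reduction lemma over one epoch and drop $\mathbb{E}[V^m]\ge 0$. In part (ii), both apply Young's inequality with exactly $\lambda = L/2$, i.e.\ $\bar\beta|\langle\nabla f(x^t),d^t\rangle| \le \tfrac{L}{4}\|d^t\|^2 + \tfrac{\bar\beta^2}{L}\|\nabla f(x^t)\|^2$; this is absorbable because the hypothesis forces $\bar\beta^2 \le \tfrac{1}{8\kappa}\le\tfrac14$. Both then use $\|\nabla f\|^2\ge 2\mu(f-f^\star)$. The paper silently averages over $m$ points and leaves $x^{-1}$ undefined; your treatment of the $(m+1)$-point average (valid since the per-step decrease coefficient is at most $1$) and your convention $x^{-1}=x^0$ fill those gaps.
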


\begin{remark}
  From \cref{prop:svrg-conv}, to derive linear convergence, the epoch length of {\svrg} is $m = \mathcal{O}(\kappa^2)$, and the epoch length for {\sh} is $m = \mathcal{O}(\kappa^2)$. Our result has a inferior dependence on $\kappa$ compared to the result of {\cite{johnson2013accelerating}}, which requires $m = \mathcal{O} (\kappa)$. This difference results from the choice of potential function: {\cite{johnson2013accelerating}} uses the potential $\| x^t - x^{\star}\|^2$.
  However, this potential cannot be directly extended to the matrix stepsize case where $\underline{\alpha} I \preceq P_k \preceq \bar{\alpha} I$. 
  Therefore, in the analysis in this section, we use function value gap as the potential.
\end{remark}

We now develop a convergence analysis for {\ov}.
First, we show that the stepsize $P_k$ is bounded. 
Then we show {\ov} converges linearly as long as the stepsize $P$ is bounded,
which we can ensure using stepsize decay ($c < 1$ in \cref{alg:osgm-svrg}) or projection (bounded $\mathcal P$) as a safeguard. This analysis approach is also used in the analysis of the stochastic \texttt{L-BFGS} method {\cite{moritz2016linearly}} and Barzilai-Borwein stepsize {\cite{tan2016barzilai}} in the {\svrg} framework.

\begin{proposition}[Bounded stepsize]
  \label{prop:vr-stepsize-bound}Consider the stepsize $P_k$ updated by {\ov}.
  \begin{enumerate}
    \item \emph{Scalar Stepsize.} Suppose $\mathcal{P} = \{ \alpha I : \alpha \in \mathbb{R} \}$. Let $P_k = \alpha_k I$, regularization $\rho = 0$. Assume initial stepsize $ \alpha_0 \in \left[\tfrac{1}{L}, \tfrac{1}{\mu} \right]$, and {\os} learning rate satisfies $\eta \leq \tfrac{1}{L}$ for hypergradient feedback, $\eta \leq \tfrac{1}{2L^2}$ for ratio feedback. Then
    \[ \alpha_k \in [ \tfrac{1}{L}, \tfrac{1}{\mu} ], \quad \forall k. \]
  
    \item \emph{Matrix Stepsize.} Suppose $\mathcal{P} =\mathbb{S}^{n}$ or $\mathcal{P} = \{\operatorname{diag}(d):d \in \mathbb{R}^n\}$. Let regularization $\rho = 2 L$, learning rate $\eta \leq \tfrac{1}{L}$ for hypergradient feedback, $\rho = 4 L^2$, $\eta \leq \tfrac{1}{2L^2}$ for ratio feedback. Assume initial stepsize $P_0 = 0$. Then
    \[ P_k \preceq \tfrac{1}{L} I, \quad \forall k. \]
  \end{enumerate}
\end{proposition}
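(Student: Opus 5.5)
For part (i), the plan is a one-dimensional invariance argument. For part (ii), the plan is an induction on an operator-norm bound.

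\emph{Scalar case.} With $P=\alpha I$ and $\rho=0$, the update \eqref{eq:svrg-P-update} becomes $\alpha_{k+1}=\alpha_k+\eta\, s_k\,\varphi_k(\alpha_k)$. Here
\[
\varphi_k(\alpha)=\tfrac{\langle \nabla f(\tilde x^k-\alpha g),\, g\rangle}{\|g\|^2},\qquad g=\nabla f(\tilde x^k),
\]
and $s_k=1$ for hypergradient feedback while $s_k=\|g\|^2/(f(\tilde x^k)-f^\star)$ for ratio feedback. The first step is to bound $\varphi_k$. Write $\langle\nabla f(x-\alpha g),g\rangle=\|g\|^2-\langle \nabla f(x)-\nabla f(x-\alpha g),g\rangle$. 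By $\mu$-strong convexity and $L$-smoothness (\cref{as:smooth}), the last inner product lies in $[\alpha\mu\|g\|^2,\alpha L\|g\|^2]$. Hence $\varphi_k(\alpha)\in[1-\alpha L,\,1-\alpha\mu]$. For the ratio feedback we also use the standard bounds $2\mu(f-f^\star)\le\|\nabla f\|^2\le 2L(f-f^\star)$, which give $s_k\in[2\mu,2L]$.

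Next, assume $\alpha_k\in[\frac1L,\frac1\mu]$. Then $1-\alpha_kL\le 0\le 1-\alpha_k\mu$, so the worst case in each direction is attained at $s=s_{\max}$.
\begin{itemize}
\item Hypergradient, $s=1$: the lower bound is $\alpha_k(1-\eta L)+\eta\ge \frac1L$ because $\eta L\le1$. The upper bound is $\alpha_k(1-\eta\mu)+\eta\le\frac1\mu$.
\item Ratio, $s=2L$: the lower bound is $\alpha_k(1-2\eta L^2)+2\eta L\ge\frac1L$ because $\eta\le\frac1{2L^2}$. The upper bound is $\alpha_k(1-2\eta L\mu)+2\eta L\le\frac1\mu$.
\end{itemize}
Each of these expressions is affine and nondecreasing in $\alpha_k$, so it suffices to evaluate at the endpoints. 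Induction from $\alpha_0\in[\frac1L,\frac1\mu]$ then finishes part (i).

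\emph{Matrix case.} Here the gradient of the regularized hypergradient feedback is $-\nabla f(x-Pg)g^\top/\|g\|^2+\rho P$. Writing $\nabla f(x-Pg)=g-HPg$ with the averaged Hessian $H=\int_0^1\nabla^2 f(x-\tau Pg)\,d\tau$ satisfies $0\preceq H\preceq LI$. (If $f$ is not twice differentiable, use a smoothing argument or the cocoercivity inequality directly.) The pre-projection update is then
\[
(1-\eta\rho)P_k+\eta\,\tfrac{(I-HP_k)\,gg^\top}{\|g\|^2}.
\]
Projection onto $\mathbb{S}^n$ symmetrizes this matrix. Projection onto diagonals takes its diagonal, and the diagonal entries are bounded by the extreme values of $u^\top(\cdot)u$ over unit coordinate vectors. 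So in both cases it suffices to bound the quadratic form $u^\top(\cdot)u$ for unit $u$.

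I would prove by induction the stronger two-sided statement $-\frac1L I\preceq P_k\preceq\frac1L I$, which holds trivially for $P_0=0$. Under this hypothesis $\|HP_k\|\le1$. The quadratic form of the correction term equals $\eta\,[(u^\top g)^2-(u^\top HP_kg)(u^\top g)]/\|g\|^2$, which is at most $2\eta$ in absolute value. For ratio feedback the extra factor $\|g\|^2/(f-f^\star)\le 2L$ enters, which is why $\rho$ is doubled to $4L^2$ and $\eta\le\frac1{2L^2}$. Combining with $(1-\eta\rho)P_k$ gives the bound $|1-\eta\rho|/L+2\eta$ (respectively $+4\eta L$ for ratio). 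This equals exactly $\frac1L$ when $\eta\rho\le1$, i.e. $\eta\le\frac1{2L}$ (respectively $\eta\le\frac1{4L^2}$).

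The main obstacle is the remaining range $\eta\rho\in(1,2]$. There $(1-\eta\rho)P_k$ flips sign and the crude operator-norm bound overshoots. In that range I would stop tracking the two-sided bound and use the structure of the rank-one term instead. Since $(u^\top g)^2\ge0$ and $H\succeq0$, one can try to show $u^\top\!\big[(1-\eta\rho)P_k+\eta\,\mathrm{sym}(I-HP_k)gg^\top/\|g\|^2\big]u\le\frac1L$ using only an upper bound $P_k\preceq\frac1L I$ together with a separate, cruder lower bound on $P_k$ (e.g. $P_k\succeq -\frac{c'}{L}I$). This means maximizing the quadratic in $a=u^\top g/\|g\|$ against the contribution of $-HP_k$. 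If that fails, the fallback is to prove the proposition with the slightly smaller learning rates identified above.
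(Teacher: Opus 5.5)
Your scalar argument matches the paper's: the same bound $1-\alpha L\le\varphi_k(\alpha)\le 1-\alpha\mu$, the same use of $\|\nabla f\|^2\le 2L(f-f^\star)$ for ratio feedback, and the same monotone affine maps evaluated at the endpoints. For the matrix case, your two-sided quadratic-form induction $-\tfrac1L I\preceq P_k\preceq\tfrac1L I$ is correct whenever $\eta\rho\le1$, i.e.\ $\eta\le\tfrac{1}{2L}$ for hypergradient feedback or $\eta\le\tfrac{1}{4L^2}$ for ratio feedback. This is a different route from the paper's. The paper instead runs the scalar recursion $\|P_{k+1}\|_F\le(1-\eta\rho)\|P_k\|_F+\eta(1+L\|P_k\|_F)$. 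That route needs only two facts: projection onto a linear subspace is nonexpansive, and $\|\nabla f(x-Pg)\|\le(1+L\|P\|)\|g\|$. It is shorter and yields the stronger bound $\|P_k\|_F\le\tfrac1L$. It also shows your averaged Hessian is unnecessary, since Lipschitz continuity alone gives $|u^\top(\nabla f(x)-\nabla f(x-P_kg))|\le\|g\|$.

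The range you flag as the main obstacle, $\eta\rho\in(1,2]$, cannot be closed. The statement is false for part of it, including the endpoint $\eta\rho=2$ that the proposition allows. Here is a counterexample:
\begin{itemize}
\item Take $n=1$ and $f_i(x)=\tfrac L2x^2$ for all $i$. Then $\mathbb{S}^1$ and the diagonal set both equal $\mathbb{R}$, so the projection is trivial.
\item Use hypergradient feedback with $\rho=2L$, and a decay factor $c\in(0,1)$ so that the snapshots stay nonzero.
\end{itemize}
For any snapshot $x\neq0$ we have $g=Lx$ and $\nabla f(x-pg)g/\|g\|^2=1-Lp$, so
\[
p_{k+1}=(1-\eta\rho)p_k+\eta(1-Lp_k)=(1-3\eta L)p_k+\eta,\qquad p_k=\tfrac{1}{3L}\big(1-(1-3\eta L)^k\big).
\]
With $\eta=\tfrac1L$ and $p_0=0$ this gives $p_1=\tfrac1L$, $p_2=-\tfrac1L$, $p_3=\tfrac3L$. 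More generally, $p_k$ eventually exceeds $\tfrac1L$ for every $\eta>\tfrac{2}{3L}$. Ratio feedback with $\rho=4L^2$ gives $p_{k+1}=(1-6\eta L^2)p_k+2\eta L$, which fails for every $\eta>\tfrac{1}{3L^2}$, including $\eta=\tfrac{1}{2L^2}$.

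So your tentative plan of pairing $P_k\preceq\tfrac1L I$ with a cruder lower bound must fail: from $p_2=-\tfrac1L$ the next iterate is $\tfrac3L$. Your fallback of restricting to $\eta\rho\le1$ is the provable statement, not merely a convenience. The paper's proof carries the same hidden restriction. Its first inequality replaces $\|(1-\eta\rho)P_k\|_F$ by $(1-\eta\rho)\|P_k\|_F$, which is valid only when $1-\eta\rho\ge0$.
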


\begin{remark}
\cref{prop:vr-stepsize-bound} highlights the self-adaptivity of
{\ov}. For scalar stepsize, even \emph{without} regularization, the update rule automatically constrains the stepsize within a bounded interval. For matrix stepsizes, regularization is needed to keep $\|P_k\|_2$ upper-bounded.  
\end{remark}

With \cref{lem:vr-pot} and \cref{prop:vr-stepsize-bound}, we are ready to get the main convergence result.

\begin{theorem}[Convergence of {\ov}]
  \label{thm:osgm-svrg}
  Consider the following instantiations,
  \begin{enumerate}
    \item \emph{No momentum, scalar stepsize.} Suppose $\beta_k=0$, $\mathcal{P} = \{ \alpha I : \alpha \in \mathbb{R} \}$, and $c \leq \tfrac{1}{\kappa (\kappa+ 1)}$. 
    Under the conditions in \cref{prop:vr-stepsize-bound} (i), then
    \[ \mathbb{E} [f (\tilde{x}^k) - f^{\star}] 
    \leq 
    \left( 
      \tfrac{\kappa}{2 m (c - c^2 \kappa)} + \tfrac{c\kappa^2}{1 - c \kappa^2}
    \right)^k 
    [f (\tilde{x}^0) - f^{\star}] . \]

    \item \emph{No momentum, matrix stepsize.} Suppose $\beta_k=0$, $\mathcal{P} = \{ P \in \mathbb{S}^{n}: P \succeq \underline{\alpha} I\}$ or $\mathcal{P} = \{P=\operatorname{diag}(d):d \in \mathbb{R}^n, d \geq \underline{\alpha}\}$, and $c \leq \underline{\alpha} \mu$. 
    Under the conditions in \cref{prop:vr-stepsize-bound} (ii), then
    \[ \mathbb{E} [f (\tilde{x}^k) - f^{\star}] \leq \left( 
      \tfrac{1}{2 m \left( c \underline{\alpha} \mu - c^2
     \right)} + \tfrac{c}{\left( \underline{\alpha} \mu - c \right)}
      \right)^k [f(\tilde{x}^0) - f^{\star}] . \]

    \item \emph{Bounded momentum, scalar stepsize.} Suppose $0 \leq \beta_k \leq \tfrac{1}{\kappa^2}$, $\mathcal{P} = \{ \alpha I : \alpha \in
    \mathbb{R} \}$, $c \leq \tfrac{1}{4 \kappa (\kappa + 1)}$,
    and ${\beta_k} \leq \sqrt{\tfrac{c}{2}}$. 
    Under the conditions in \cref{prop:vr-stepsize-bound} (i), then
    \[ \mathbb{E} [f (\tilde{x}^k) - f^{\star}] 
      \leq 
      \left( 
        \tfrac{\kappa}{m (c - c^2 \kappa)} + \tfrac{4 c \kappa^2}{1 - 4 c \kappa^2}
      \right)^k 
       [f (\tilde{x}^0) - f^{\star}]. \]
  \end{enumerate}
\end{theorem}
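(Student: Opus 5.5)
The plan is the standard SVRG epoch argument: telescope the one-step bounds of \cref{lem:vr-pot} over an epoch, use the uniform choice of $\tilde x^{k+1}$ to turn the accumulated descent into $f(\tilde x^{k+1})-f^\star$, and read off the contraction factor. The stepsize bounds from \cref{prop:vr-stepsize-bound} supply the constants $\underline\alpha,\bar\alpha$ that enter the lemma.

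\textbf{Case (i).} I would fix an epoch $k$ and condition on everything before it, so $P_k=\alpha_k I$ is deterministic with $\alpha_k\in[\tfrac1L,\tfrac1\mu]$ by \cref{prop:vr-stepsize-bound}(i). Applying \eqref{pot-svrg} with $\underline\alpha=\bar\alpha=\alpha_k$, taking total expectation, and summing over $t=0,\dots,m-1$ with $x^0=\tilde x^k$ gives
\[
(2c\alpha_k\mu-2c^2\alpha_k^2L^2)\sum_{t=0}^{m-1}\mathbb{E}[f(x^t)-f^\star]\le (1+2mc^2\alpha_k^2L^2)\,(f(\tilde x^k)-f^\star),
\]
after dropping $\mathbb{E}[V^m]\ge 0$. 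Since $\tilde x^{k+1}$ is uniform over the iterates, the left sum controls $m\,\mathbb{E}[f(\tilde x^{k+1})-f^\star]$ (up to the one extra iterate $x^m$, which I would handle by telescoping one further step). Dividing, the epoch ratio is
\[
\frac{1}{2m(c\alpha_k\mu-c^2\alpha_k^2L^2)}+\frac{c\alpha_k^2L^2}{\alpha_k\mu-c\alpha_k^2L^2}.
\]
The remaining work is to bound this uniformly over $\alpha_k\in[\tfrac1L,\tfrac1\mu]$. For the first term I would use $\alpha_k\mu\ge 1/\kappa$ together with $\alpha_k^2L^2\le\kappa^2$. For the second term, the worst case is $\alpha_k=1/\mu$, which gives $\tfrac{c\kappa^2}{1-c\kappa^2}$. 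The condition $c\le\tfrac1{\kappa(\kappa+1)}$ keeps every denominator positive, and I expect it to make the sum less than one once $m$ is large. Iterating over epochs via the tower property then yields the stated bound.

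\textbf{Case (ii).} The argument is the same, but the constants change. The feasible set gives $P_k\succeq\underline\alpha I$, and \cref{prop:vr-stepsize-bound}(ii) gives $P_k\preceq \tfrac1L I$; I would check that the projection onto $\mathcal P$ with the lower bound does not break that proposition's argument. Taking $\bar\alpha=1/L$ makes $\bar\alpha^2L^2=1$, so the ratio becomes $\tfrac{1}{2m(c\underline\alpha\mu-c^2)}+\tfrac{c}{\underline\alpha\mu-c}$, with $c\le\underline\alpha\mu$ ensuring positivity.

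\textbf{Case (iii).} I would telescope \eqref{pot-hbm} with the potential $V^t$. The new difficulty is the cross term $(1-2c\alpha L)\bar\beta\langle\nabla f(x^t),x^t-x^{t-1}\rangle$, which I would bound by Young's inequality:
\[
\bar\beta|\langle\cdot,\cdot\rangle|\le \tfrac{c\alpha}{2}\|\nabla f\|^2+\tfrac{\bar\beta^2}{2c\alpha}\|x^t-x^{t-1}\|^2.
\]
The momentum assumptions $\beta_k\le\sqrt{c/2}$ and $\beta_k\le 1/\kappa^2$, combined with $\alpha\ge 1/L$, should let the negative term $-(\tfrac12-\bar\beta^2)L\|x^t-x^{t-1}\|^2$ absorb the distance part. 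What remains is a net $-(\tfrac{c\alpha}{2}-2c^2\alpha^2L\kappa)\|\nabla f\|^2$, which I convert to function gap via $\|\nabla f\|^2\ge 2\mu(f-f^\star)$. The rest proceeds as in (i), with $4c^2\alpha^2L^2$ in place of $2c^2\alpha^2L^2$. This accounts for the factor $4$ in the second term and the halving of $m$ in the first, and it uses $V^0=f(\tilde x^k)-f^\star$ because $x^{-1}=x^0$ at the start of each epoch.

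The main obstacle is this Young-inequality balancing in (iii). The dependence on $\alpha$ is delicate because $\alpha$ can be as large as $1/\mu$, and the constants must be arranged to land exactly on $\tfrac{\kappa}{m(c-c^2\kappa)}$. If the direct absorption fails, I would instead split the cross term and use $\langle\nabla f,x^t-x^{t-1}\rangle\le\tfrac{1}{2L}\|\nabla f\|^2+\tfrac L2\|x^t-x^{t-1}\|^2$, relying on the small $\bar\beta$.
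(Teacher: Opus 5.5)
Your overall plan is the paper's: telescope \cref{lem:vr-pot} over an epoch, use the uniform snapshot, then take the worst case over the stepsize interval from \cref{prop:vr-stepsize-bound}. Case (ii) is exactly the paper's argument. The step that fails is your uniform bound on the first term in case (i).

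Write $g(\alpha) := c\alpha\mu - c^2\alpha^2L^2$. Bounding its two pieces separately, via $\alpha_k\mu \ge 1/\kappa$ and $\alpha_k^2L^2 \le \kappa^2$, only gives $g(\alpha_k) \ge c/\kappa - c^2\kappa^2$. That yields a first term $\tfrac{\kappa}{2m(c - c^2\kappa^3)}$, not the claimed $\tfrac{\kappa}{2m(c - c^2\kappa)}$. Worse, this lower bound is nonpositive whenever $c\kappa^3 \ge 1$. The hypothesis $c \le \tfrac{1}{\kappa(\kappa+1)}$ allows that for every $\kappa \ge 2$, and the remark's choice $c = \tfrac{1}{3\kappa^2}$ hits it once $\kappa \ge 3$, so the bound becomes vacuous.

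The missing observation is that $g$ is a concave quadratic in $\alpha$. Its minimum over $[\tfrac1L,\tfrac1\mu]$ is therefore attained at an endpoint, and
\[
g(\tfrac1L) = \tfrac{c}{\kappa} - c^2 \le c - c^2\kappa^2 = g(\tfrac1\mu) \iff c(\kappa^2-1) \le 1 - \tfrac1\kappa \iff c \le \tfrac{1}{\kappa(\kappa+1)} .
\]
This is where the hypothesis on $c$ comes from; positivity of the denominators only needs $c < 1/\kappa^2$. It gives exactly the stated first term. For the second term, $\tfrac{c\alpha L^2}{\mu - c\alpha L^2}$ is increasing in $\alpha$, so your choice $\alpha_k = 1/\mu$ there is correct. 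Since you say case (iii) ``proceeds as in (i)'', it needs the same fix: $h(\alpha) := c\alpha\mu - 4c^2\alpha^2L^2$ is concave, and $h(\tfrac1L) \le h(\tfrac1\mu)$ iff $c \le \tfrac{1}{4\kappa(\kappa+1)}$.

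In case (iii), your Young split is a valid variant of the paper's, and the direct absorption works, so your fallback is unnecessary.
\begin{itemize}
\item The paper bounds $\bar\beta|\langle\nabla f(x^t), x^t - x^{t-1}\rangle| \le \tfrac L4\|x^t - x^{t-1}\|^2 + \tfrac{\bar\beta^2}{L}\|\nabla f(x^t)\|^2$. It then uses $2\bar\beta^2 \le c\alpha_k L$ (from $\bar\beta \le \sqrt{c/2}$ and $\alpha_k \ge 1/L$) to absorb the resulting $\tfrac{2\bar\beta^2}{\kappa}(f(x^t) - f^\star)$ into half of $2c\alpha_k\mu(f(x^t)-f^\star)$.
\item You put weight $c\alpha$ on the gradient. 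You then absorb $\tfrac{\bar\beta^2}{2c\alpha}\|x^t - x^{t-1}\|^2 \le \tfrac L4\|x^t - x^{t-1}\|^2$ into the momentum term, using the same two facts plus $\bar\beta^2 \le \tfrac14$.
\end{itemize}
Both routes give the same per-step coefficient $c\alpha_k\mu - 4c^2\alpha_k^2L^2$.

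Be aware that evaluating this at $\alpha_k = 1/L$ gives $\tfrac{\kappa}{m(c - 4c^2\kappa)}$, not the displayed $\tfrac{\kappa}{m(c - c^2\kappa)}$. The paper's proof reaches the same intermediate expression and drops the factor $4$ in the final substitution. So do not try to land exactly on the displayed constant. The remark's choice $c = \tfrac{1}{12\kappa^2}$, $m = 72\kappa^3$ still gives contraction at most $\tfrac34$ with the corrected term.
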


\begin{remark}
  Using \cref{thm:osgm-svrg}, we can choose decay factor $c$ and epoch length $m$ to guarantee linear convergence with contraction rate $\tfrac{3}{4}$.
  For setting (i), a feasible choice is $c = \tfrac{1}{3 \kappa^2}$ and $m =
  9 \kappa^3$.
  For setting (ii), a feasible choice is $c = \tfrac{\underline{\alpha} \mu}{3}$
  and $m = \tfrac{9}{\underline{\alpha}^2 \mu^2}$.
  For setting (iii), a feasible choice is $c = \tfrac{1}{12 \kappa^2}$ and $m = 72 \kappa^3$. 
  This result has inferior dependence on $\kappa$ compared to the results of {\svrg} and {\sh} (\cref{prop:svrg-conv}). 
  However, our method rewards practical faster convergence.
\end{remark}

\section{Experiments}\label{sec:exp}

The previous sections introduce {\osgd} and {\ov}. 
This section benchmarks the performance of these algorithms and some more practical variants on machine learning and deep learning tasks. Since $f^\star$ is typically unknown, 
all experiments use hypergradient feedback.

\subsection{Practical variants}

The algorithms {\osgd} and {\ov} are designed to admit clean convergence proofs.
This section introduces variants of these algorithms optimized for performance 
rather than theoretical guarantees, which is summarized in \cref{tab:osgm-variants}. 
As shown in \cref{counterexample}, the practical variants can fail to converge in adversarial settings not covered by our theory.
Nevertheless, they uniformly outperform the theoretical variants on the benchmarks we consider, suggesting that the counterexample conditions are rarely encountered in practice. 
As a result, numerical results in the main paper show only results for these practical variants. 
Results for the original, provably convergent variants, appear in Appendix~\ref{app:exp-fig}.

\begin{table}[ht]
  \centering
  \resizebox{\textwidth}{!}{\begin{tabular}{cccc}
      \toprule
      \textbf{Family} & \textbf{Algorithm} & \textbf{Iteration} & \textbf{Feedback}\\
      \midrule
      \multirow{2}{*}{\makecell{{\sgd}\\Variants}}
      & {{\osgd} (\cref{alg:sosgm})}
      & $x^{k+1} = x^k - \textcolor{red}{P_k}\nabla f_{\xi^k}(x^k)$
      & $ 
        \tfrac{
          f_{\zeta^k}\!(x^{k+1}(\textcolor{red}{P_k})) - f_{\zeta^k}(x^k)
        }{
          \|\nabla f_{\xi^k}(x^k)\|^2
        }$\\
      \cmidrule(lr){2-4}
      & {{\osgdh} (\cref{alg:prac-osgd})}
      & $x^{k+1} = x^k - \textcolor{red}{P_k}\nabla f_{\xi^k}(x^k)$
      & $ 
        \tfrac{
          f_{\xi^k}\!(x^{k+1}(\textcolor{red}{P_k})) - f_{\xi^k}(x^k)
        }{
          \|\nabla f_{\xi^k}(x^k)\|^2
        }$\\
        \midrule
      \multirow{3}{*}{\makecell{Variance\\Reduction}}
      & {{\ov} (\cref{alg:osgm-svrg})}
      & $x^{t+1} = x^t - c\,\textcolor{red}{P_k}\, g_t + \beta_t\, m^t$
      & $ 
        \tfrac{
          f\!(\tilde{x}^k - \textcolor{red}{P_k}\nabla f(\tilde{x}^k)) - f(\tilde{x}^k)
        }{
          \|\nabla f(\tilde{x}^k)\|^2
        }+\tfrac{\rho}{2} \|\textcolor{red}{P_k}\|_F^2$\\
      \cmidrule(lr){2-4}
      & {\ovh (\cref{alg:prac-osgm-vr})}
      & $x^{t+1} = x^t - \textcolor{red}{P_t}\, g_t + \textcolor{red}{\beta_t}\, m^t$
      & $ 
        \tfrac{
          f_{\xi^t}\!(x^{t+1}(\textcolor{red}{P_t},\textcolor{red}{\beta_t})) - f_{\xi^t}(x^t)
        }{
          \|g_t\|^2 + \|m^t\|^2
        }$\\
      \cmidrule(lr){2-4}
      & {\osk (\cref{alg:osgm-sketchy})}
      & $x^{t+1} = x^t - \textcolor{red}{\alpha_t}\bar{P}_k\, g_t - \textcolor{red}{D_t} g_t + \textcolor{red}{\beta_t}\, m^t$
      & $ 
        \tfrac{
          f_{\xi^t}\!(x^{t+1}(\textcolor{red}{\alpha_t},\textcolor{red}{D_t},\textcolor{red}{\beta_t})) - f_{\xi^t}(x^t)
        }{
          \|g_t\|^2 + \|m^t\|^2
        }$\\
      \bottomrule
    \end{tabular}}
  \caption{
    Summary of {\sosgm} algorithms. 
    The parameters tuned by {\os} are marked as {\textcolor{red}{red}}. 
    Methods with provable convergence guarantees are marked in colors. 
    For {\osk}, $\bar{P}_k$ is the random low-rank preconditioner updated by sketchy methods per epoch. 
    We use $x^{k+1}(\textcolor{red}{P_k})$ to denote parameterized update rule. 
    For VR methods, $g_t \assign \nabla f_{\xi^t}(x^k)-\nabla f_{\xi^t}(\tilde{x}^k)+\nabla f(\tilde{x}^k)$ and $m^t \assign x^t - x^{t-1}$.
  }
  \label{tab:osgm-variants}
\end{table}

\paragraph{Practical {\osgd} variant and choice of feedback} In \cref{sec:large-batchsize}, we discussed the choice of feedback function from a theoretical perspective. We saw that computing the gradient and evaluating the feedback on a different sample from the data distribution was necessary to guarantee convergence. 
However, in practice, this feedback can result in a conservative stepsize choice and slow convergence.
Conversely, although feedback \eqref{eq:hoverfit}, which uses the same sample to compute the gradient and evaluate the feedback, may not converge in the worst case, it converges quickly in practice. 
Therefore, our numerical results in this section use variant {\osgdh} powered by feedback \eqref{eq:hoverfit}. An empirical comparison between feedbacks \eqref{eq:hoverfit} and \eqref{eq:def-h} appears in Appendix~\ref{app:exp-fig2}.

\paragraph{Practical {\ov} variants} {\ov} applies {\os} as the outer-loop stepsize scheduler. In practice, we can apply {\os} in each inner loop and tune the stepsize and momentum simultaneously (as discussed in \cite{chu2025gradient}). We use a diagonal stepsize since it is more efficient in memory and compute than a matrix stepsize. The pseudocode for the practical variant {\ov} appears as \cref{alg:prac-osgm-vr} in Appendix~\ref{app:exp-algs}. 

{\os} can be implemented on top of {\promise} methods \cite{frangella2024promise}, 
to tune the stepsize of a low rank preconditioner,
or in the context of an optimizer with heavy-ball momentum, to tune the momentum coefficient \cite{chu2025provable}. 
Our strongest algorithm in practice, {\osk}, uses {\os} to tune both the diagonal stepsize and momentum parameter of a heavy-ball variant of {\ssvrg}.
Pseudocode is presented as \cref{alg:osgm-sketchy}.

\begin{algorithm}[ht!]
\caption{{\osk}}
\label{alg:osgm-sketchy}
\begin{algorithmic}[1]
\STATE \textbf{Input:} Initial $\tilde{x}^0$, $P_0=0$, $\beta_0=0$, epoch length $m$, {\os} learning rates $\eta_P$ and $\eta_{\beta}$, decay factor $c$, regularization $\rho$, candidate set of diagonal stepsize and momentum $\mathcal{P}$, $\mathcal{B}$
\FOR{$k=0,1,2,\dots$}
    \STATE Compute snapshot gradient $\nabla f(\tilde{x}^k) = \tfrac1n\sum_{i=1}^n \nabla f_i(\tilde{x}^k)$
    \STATE Estimate low-rank preconditioner $\bar P_k$ by sketchy methods
    
    \STATE Set $x^0 = \tilde{x}^k$
    \FOR{$t=0,\dots,m-1$}
        \STATE Sample $\xi^t$ uniformly
        \STATE
        $
            g_t = \nabla f_{\xi^t}(x^t) - \nabla f_{\xi^t}(\tilde{x}^k) + \nabla f(\tilde{x}^k)
        $
        \STATE
        $x^{t+1} = x^t
        - \alpha_{t} \bar{P}_k g_t
        - D_{t} g_t
        + \beta_{t}(x^t - x^{t-1})$

        \STATE Define feedback $\ell_{t}(\alpha, D, \beta)=\tfrac{f(x^t- \alpha \bar{P}_k g_t - D g_t+ \beta(x^t - x^{t-1})) - f(x^t)}{\|g_t\|^2 + \|x^t - x^{t-1}\|^2}$

        \STATE Update scalar stepsize: \quad \, $\alpha_{t+1} = \alpha_{t} - \eta_P \nabla_{\alpha} \ell_{t}(\alpha_t, D_t, \beta_t)$

        \STATE Update diagonal stepsize: $D_{t+1} = \Pi_{\mathcal{P}}\big[D_{t} - \eta_P \nabla_D \ell_{t}(\alpha_t, D_t, \beta_t)\big]$

        \STATE Update momentum:\qquad \quad $\beta_{t+1} = \Pi_{\mathcal{B}} \big[ \beta_{t} - \eta_{\beta} \nabla_{\beta} \ell_{t}(\alpha_t, D_t, \beta_t) \big]$
    \ENDFOR
    \STATE Choose $\tilde{x}^{k+1}$ uniformly from $\{x^0,\dots,x^m\}$, set $\alpha_0=\alpha_m, D_0=D_m, \beta_0=\beta_m$
\ENDFOR
\end{algorithmic}
\end{algorithm}

VR methods offer linear convergence and perform best for statistical learning applications. 
In contrast, for non-convex problems such as training deep neural networks, VR methods tend to underperform. Hence our experiments showcase the  methods on statistical learning, and  {\sgd} variants for deep learning.

\subsection{Statistical learning}

We benchmark {\ovh} and {\osk} on logistic regression with L2 regularization and ridge regression problems. 
We use datasets from LIBSVM \cite{chang2011libsvm} and OpenML \cite{vanschoren2013openml}, and set the batchsize to 256. 
The regularization parameter of logistic and ridge regression is ${10^{-2}}/{n}$.
We present more details on the datasets in Appendix~\ref{app:exp-dataset}.

\paragraph{Benchmark algorithms} We benchmark the following variance reduction algorithms.
\begin{itemize}
  \item \emph{Baseline VR optimizers:} {\svrg} \cite{johnson2013accelerating}, {\saga} \cite{defazio2014saga}, and {\kat} (\texttt{Loopless Katyusha} \cite{kovalev2020don}) with tuned stepsize. For {\svrg}, the update frequency is $m=\lceil n/256 \rceil$.
  \item \emph{{\promise} suite} \cite{frangella2024promise}. {\ssvrg}, {\ssaga}, and {\skat} with Nystr\"om Subsampled Newton preconditioner, rank 10, and default stepsize.
  \item \emph{Practical {\ov} variants.} {\ovh} and {\osk} with default {\os} learning rate for stepsize $\eta_P=1/L$, and default learning rate for momentum $\eta_{\beta}=0.1$.
\end{itemize}
We do not show performance of {\sgd} or {\osgdh} in our experiments because they perform much worse on benchmark tasks, as they converge sublinearly.
We use default stepsize for {\promise} suite since it already significantly outperforms the tuned baseline optimizers as shown by the experiments in \cite{frangella2024promise}.

\paragraph{Suboptimality experiments} \cref{fig:subopt} shows performance plots for logistic regression and ridge regression. The $y$-axis represents the suboptimality $f(x) - f^\star$. 

All these methods converge linearly. Yet a quick examination of the figures shows that as a practical matter, the baseline  optimizers do \emph{not} converge to a high-accuracy solution even after hundreds of epochs. In contrast, our strongest method {\osk} reaches high-accuracy regimes ($10^{-6}$ -- $10^{-12}$) normally considered beyond the reach of stochastic optimizers.

We observe that {\ovh} always outperforms the baseline optimizers ({\svrg}, {\saga}, {\kat}). {\ovh} is competitive with {\promise} suite. However, {\ovh} has lower memory and per-iteration compute cost, as methods of the {\promise} suite require storing a $d \times r$ matrix preconditioner (where $r$ is the rank), while {\ovh} stores only a $d$-dimensional vector to represent a diagonal preconditioner.

\begin{figure}[ht]
  \centering
  \includegraphics[width=1.\linewidth]{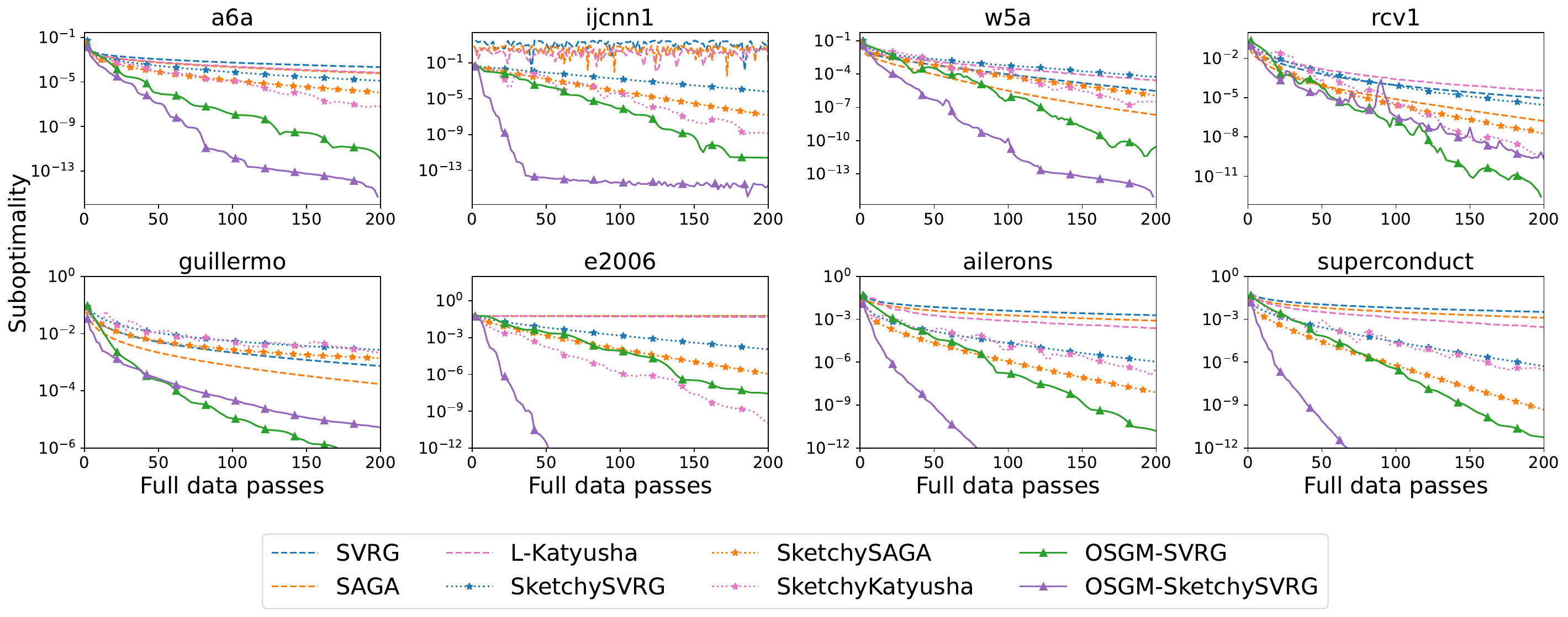}
  \caption{Suboptimality plots. First row: logistic regression. Second row: ridge regression.}
  \label{fig:subopt}
\end{figure}

Following the discussion in \cite{allen2018katyusha}, we emphasize the practical importance of high-accuracy solutions (e.g., function value gap $\leq 10^{-7}$). In particular, applications that use multiple black-box calls to ERM solvers \cite{allen2014optimization,frostig2016principal} can accumulate errors across calls, which makes high-accuracy solvers essential. 

\paragraph{Performance experiments} We compare the benchmark algorithms on a testbed of 47 medium-sized problems, including 31 logistic and 16 ridge regression problems. The primary metrics are the wall-clock time and the number of full data passes to reach suboptimality within $10^{-4}$ of the minimum. We set the budget as 600 seconds and 200 data passes. 

\cref{fig:performance} shows the performance plots. On logistic regression, {\osk} dominates the benchmark algorithms on both metrics and solves all the instances. {\ovh} is comparable with the best {\promise} variant. On ridge regression, both {\osk} and {\ovh} tie for solving the most instances within the time budget. {\osk} offers lower iteration counts, while {\ovh} delivers the fastest solve times.

\begin{figure}[ht]
  \centering
  \includegraphics[width=1.\linewidth]{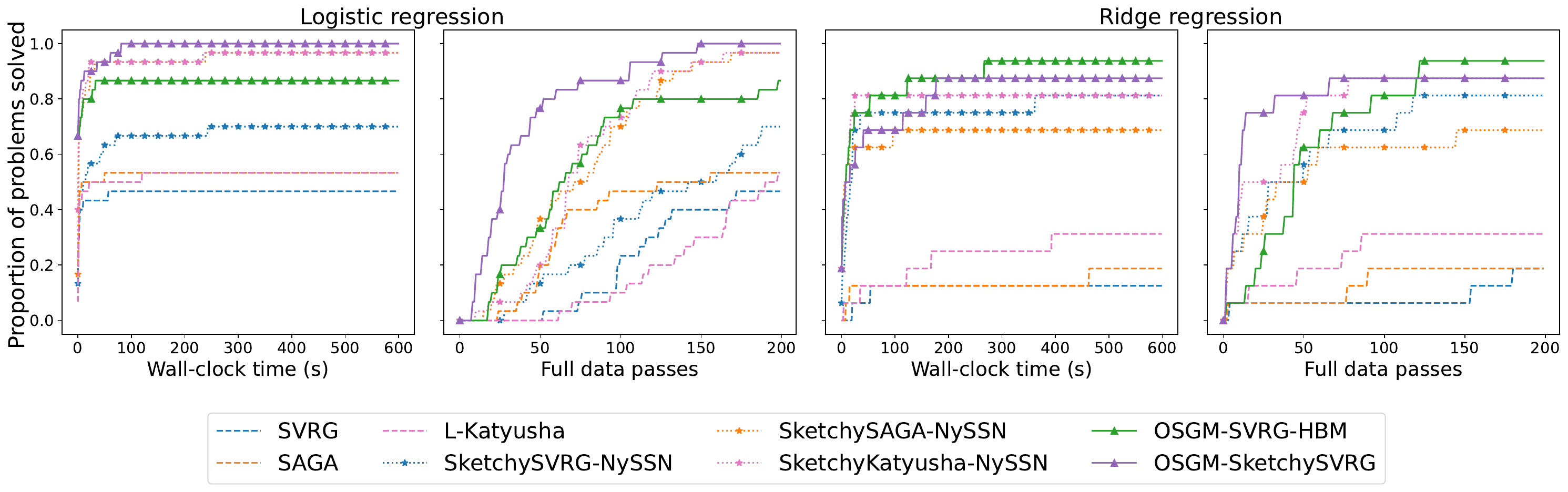}
  \caption{Performance plots. Left two: logistic regression. Right two: ridge regression.}
  \label{fig:performance}
\end{figure}

\subsection{Deep learning}

We benchmark {\osgdh} on training neural networks. 

\paragraph{Benchmark problems} We benchmark on the following problems, as in \cite{baydin2018hypergradient}: 1) Train an MLP model with two fully connected hidden layers on the MNIST dataset, and 
2) train a VGG Net \cite{SimonyanZ14a} on the CIFAR-10 image recognition dataset.
For both benchmark problems, we use a batchsize $128$ and weight decay $10^{-4}$.

\paragraph{Benchmark algorithms} We benchmark the following stochastic first-order algorithms: 
\begin{itemize}
  \item \emph{Baseline optimizers.} {\sgd}, {\agd} ({\sgd} with Nesterov momentum), and {\adam} \cite{kingma2014adam} with stepsize $10^{-3}$ for both MLP and VGG tasks.
  \item \emph{Hypergradient descent heuristics} \cite{baydin2018hypergradient}. {\sgdhd}, {\sgdnhd}, and {\adamhd}. We use the parameter setting in \cite{baydin2018hypergradient}: the initial stepsize is $10^{-3}$ for both MLP and VGG tasks. {\sgdhd} and {\sgdnhd} use hypergradient learning rate $10^{-3}$, and {\adamhd} uses hypergradient learning rate $10^{-7}$ for MLP and $10^{-8}$ for VGG.
  \item {\osgdh} with initial stepsize $10^{-3}$, and {\os} learning rate $10^{-3}$ for MLP; $10^{-2}$ for VGG.
\end{itemize}

\begin{figure}[htbp]
  \centering
  \includegraphics[width=\linewidth]{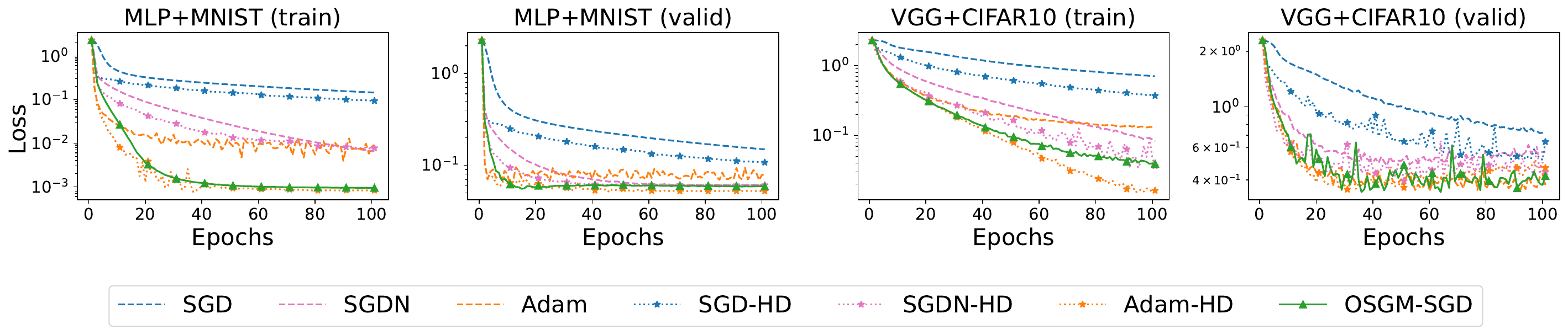}
  \caption{Performance of {\osgdh}. Left two: training and validation loss of MLP on MNIST. Right two: training and validation loss of VGG on CIFAR10.}
  \label{fig:osgm-sgd}
\end{figure}

\paragraph{Performance plots} \cref{fig:osgm-sgd} shows the training and validation loss on MLP and VGG. {\osgdh} significantly outperforms the baseline methods ({\sgd}, {\agd}, and {\adam}) on both training and validation sets. 
{\osgdh} also outperforms {\sgdhd} uniformly, showing the advantage of feedback function in {\os}. 
Notably, {\osgdh} is competitive with {\adamhd}, demonstrating that simply using adaptive stepsize for {\sgd} can match the performance of diagonal scaled momentum methods.

\section{Conclusion}

In this work, we introduce {\sosgm}, an extension of {\os} that learns a matrix stepsize for stochastic gradient methods.
We propose the {\osgd} and {\ov} algorithms, prove linear convergence for {\ov} and high-probability convergence guarantees for {\osgd} in the large-batch regime.
Numerical experiments show the advantage of these methods especially on ill-conditioned machine learning problems.

Two questions remain open: does {\ov} admit convergence guarantees when the stepsize is updated at each inner iteration rather than only in the outer loop? Can the analysis of {\osgd} be extended to show convergence even with non-vanishing gradient noise?

\renewcommand \thepart{}
\renewcommand \partname{}

\bibliographystyle{plainnat}
\bibliography{ref}

\newpage

\doparttoc
\faketableofcontents
\appendix

\addcontentsline{toc}{section}{Appendix}
\part{Appendix}
\parttoc

\section{Details of counterexample} \label{app:counterexample}

\begin{example} \label{counterexample}
  Consider an instantiation of problem \eqref{eq:F} with $n=2$:
  \begin{equation}\label{eq:counterexample-f}
    f_1 (x) = \left\{\begin{array}{ll}
       0, & x \leq 0\\
       \tfrac{1}{2} x^2 & 0 < x \leq 1\\
       x - 0.5, & x > 1
     \end{array}\right., \quad f_2 (x) = \left\{\begin{array}{ll}
       - x - 0.5, & x \leq - 1\\
       \tfrac{1}{2} x^2, & - 1 < x \leq 0\\
       0, & x > 0
     \end{array}\right..
  \end{equation}
  The minimizer of $f$ is $x^{\star} = 0$ with $f^\star=0$, and $x^{\star} \in \argmin_x f_1(x) \cap \argmin_x f_2(x)$. This problem satisfies the interpolation condition, and each $f_i$ is convex and 1-smooth.
\end{example}

\begin{figure}[ht]
  \centering
  \includegraphics[width=0.4\linewidth]{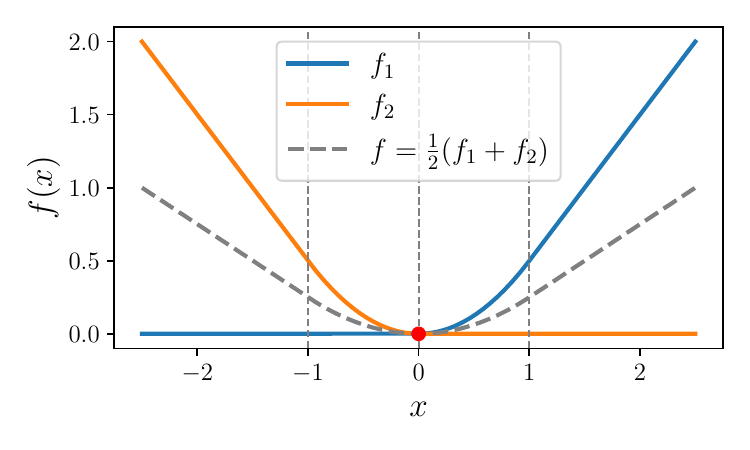}
  \includegraphics[width=0.4\linewidth]{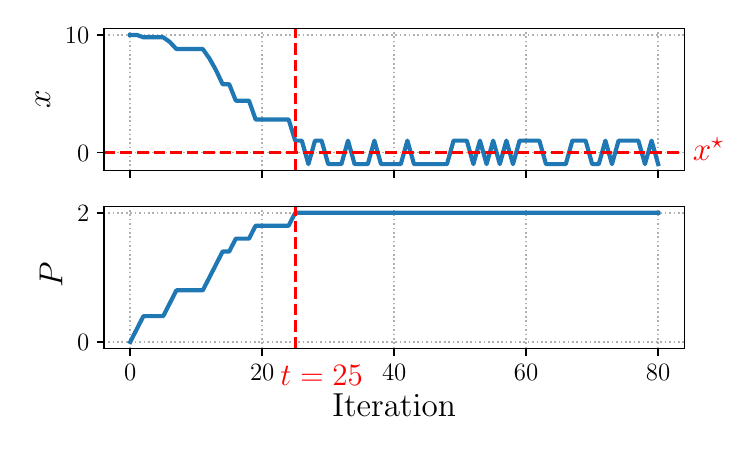}
  \caption{Illustration of \cref{counterexample}. Left: plot of $f$ defined by \eqref{eq:counterexample-f}. Right: behaviour of {\sosgm} with naive hypergradient feedback \eqref{eq:hoverfit}, with $x_0=10$, $P_0=0$, and $\eta=0.1$.}
  \label{fig:counterexample}
\end{figure}

We show that if {\os} is initialized at $x^0 = 1$, $P^0 = 2$, the {\os} iteration using the naive feedback \cref{eq:hoverfit} does not converge: instead, $\| x^k - x^{\star} \| = 1$ for every iterate $x^k$.  \\
\textit{Proof:} Start from $(x^0, P^0)$. \emph{Case 1}: $\xi^0=1$, then $\nabla f_{1} (x^0 - P_0 \nabla f_{1} (x^0))=0$. From the {\os} update with feedback \eqref{eq:hoverfit}, $P$ is not updated and we have $(x^1, P^1) = (- 1, 2)$. \emph{Case 2}: $\xi^0=2$, then $\nabla f_{2} (x^0)=0$ and $(x^1, P^1) = (1, 2)$. In either case, the stepsize $P$ is not updated and the iterate $x$ does not contract to the optimal solution $x^{\star}$.

We can generalize the proof of \cref{counterexample} to show that even if $P^0 = 0$ and the {\os} learning rate is arbitrary $\eta > 0$, there exists $x^0>1$ such that {\os} with hypergradient feedback $h_{x^k, \xi^k} (P_k)$ \eqref{eq:hoverfit} fails to converge. Since $\nabla f_2(x) =0 $ for all $x \geq 0$, whenever $\xi^k=2$ for $x^k \geq 0$, both stepsize $P_k$ and iterate $x^k$ are unchanged. So for simplicity, we will number only the iterates where $\xi^k=1$. From the {\os} update, we obtain $P_k=\eta k$, and $x^k = x^0 - \tfrac{(k-1)k}{2} \eta$. If $t=\lceil \tfrac{2}{\eta} \rceil$ and $x^0=\tfrac{(t-1)t}{2} \eta + 1$, then $x^t=1, P_t \geq 2$. Then repeating the argument in the previous paragraph concludes that the algorithm does not converge. This behaviour is also visualized in \cref{fig:counterexample}. After iteration $t=25$, stepsize $P$ stabilizes at $2$, and iterate $x$ oscillates between 1 and $-1$.

It is worth noting that the above failure mode of \eqref{eq:hoverfit} also applies when {\os} is used to tune the stepsize of VR methods. In particular, consider the ideal (but typically unavailable) VR estimator
$g^k \assign \nabla f_{\xi^k}(x^k) - \nabla f_{\xi^k}(x^\star)$ \cite{gower2020variance}.
In \cref{counterexample}, we have $\nabla f_{\xi^k}(x^\star)=0$ for every sample $\xi^k$, and hence $g^k=\nabla f_{\xi^k}(x^k)$; the resulting dynamics coincide with the stochastic gradient case above.
Therefore, applying {\os} with the naive in-sample feedback \eqref{eq:hoverfit} to update the stepsize within the inner (stochastic) iterations of a VR method can also fail to converge.
This does not conflict with {\ov}, which updates the stepsize only in the outer loop using deterministic feedback based on the full gradient at the snapshot point.

The issue in the counterexample is that the learned stepsize and resulting update can be (locally) optimal for each sampled objective $f_{\xi^k}$, while still being suboptimal for $f$. 
This suboptimality with respect to $f$ is not captured by the naive feedbacks \eqref{eq:hoverfit}, which evaluate progress only on the selected mini-batch.
We therefore require feedbacks that better reflect progress on the full objective $f$.

\section{Proofs of results in \cref{sec:large-batchsize}}

\subsection{Proof of \cref{ex:gradient-norm}} \label{app:gradient-norm-ex}
\begin{proof}
  Let $H \assign \tfrac1n \sum_{i=1}^n a_i a_i^\top$, the deviation of the stochastic gradient is
\[
    \nabla f_i(x) - \nabla f(x)
    = (a_i a_i^\top - H)(x - x^\star)
    = \Delta_i (x - x^\star).
\]
Since the feature vectors $a_i$ are sub-Gaussian with parameter $\sigma_a$, meaning
$\langle a_i, u \rangle$ is sub-Gaussian with parameter at most $\sigma_a\|u\|$ for
any $u \in \mathbb{R}^d$.  Then standard results on sub-Gaussian quadratic
forms \cite{vershynin2018high} imply the deviation bound
\[
    \mathbb{P}\!\left\{ \|\Delta_i v\| \ge t \|v\| \right\}
    \le
    2 \exp\!\big( -\tfrac{c\,t^{2}}{\sigma_a^{4}} \big)
    \qquad\forall v \neq 0,
\]
for an absolute constant $c>0$.  Applying this with $v = x - x^\star$ and
$\|\nabla f(x)\| \ge \lambda_{\min}(H)\,\|x - x^\star\|$, we obtain
\[
    \mathbb{P}\!\left\{
        \|\nabla f_i(x) - \nabla f(x)\|
        \;\ge\;
        t \, \|\nabla f(x)\|
    \right\}
    \le
    2 \exp\!\big(
        -\tfrac{c\,(t \lambda_{\min}(H))^{2}}{\sigma_a^{4}}
    \big),
\]
which is precisely \cref{as:bound-g} (iii) with $\sigma_1 = \Theta(\sigma_a^{2}/\lambda_{\min}(H))$.

Moreover, the same model also satisfies the function-value oracle
\Cref{as:bound-f}. Writing $u \assign x - x^\star$, we have
\[
    f_i(x) = \tfrac12 (a_i^\top u)^2,
    \qquad
    f(x)   = \tfrac12 u^\top H u,
\]
so
\[
    f_i(x) - f(x)
    = \tfrac12\big[(a_i^\top u)^2 - u^\top H u\big].
\]
Since $a_i^\top u$ is sub-Gaussian with parameter at most
$\sigma_a\|u\|$, the Hanson--Wright inequality
\cite{vershynin2018high} implies that
\[
    \mathbb{P}\!\left\{
        \big|(a_i^\top u)^2 - u^\top H u\big|
        \ge 2 t \, u^\top H u
    \right\}
    \le
    2 \exp\!\big(
        -\tfrac{\tilde c\,(t \lambda_{\min}(H))^{2}}{\sigma_a^{4}}
    \big),
\]
for some absolute constant $\tilde c>0$.  Using
$|f_i(x) - f(x)| = \tfrac12 |(a_i^\top u)^2 - u^\top H u|$ and
$|f(x)-f(x^\star)| = \tfrac12 u^\top H u$, this yields
\[
    \mathbb{P}\!\left\{
        |f_i(x) - f(x)|
        \;\ge\;
        t \, |f(x) - f(x^\star)|
    \right\}
    \le
    2 \exp\!\big(
        -\tfrac{\tilde c\,(t \lambda_{\min}(H))^{2}}{\sigma_a^{4}}
    \big),
\]
which matches \cref{as:bound-f} with
$\sigma_0 = \Theta(\sigma_a^{2}/\lambda_{\min}(H))$.  In particular,
both the gradient oracle and the function-value oracle have relative
noise levels of the same order, $\sigma_0 \asymp \sigma_1$.
\end{proof}

\subsection{Proof sketch and lemmas}
The proof sketch is as follows. We first establish the convergence guarantee for {\osgd} under the deterministically bounded oracles (\cref{as:bound-g-dtm,as:bound-f-dtm}). Then, notice that \cref{as:bound-g,as:bound-f} implies \cref{as:bound-g-dtm,as:bound-f-dtm} uniformly over the iterates with high probability. Applying the general reduction framework introduced in \cite{attia2024note}, we obtain the high-probability convergence guarantee for {\osgd} under \cref{as:bound-g,as:bound-f} with only a small loss in logarithmic factors.

\begin{assumption}
  \label{as:bound-g-dtm}
  The stochastic gradient oracle $\nabla f_{\xi} (x)$ satisfies, $\mathbb{E}_{\xi} [\nabla f_{\xi} (x)]={\nabla}f (x)$, and for any iterate $x \in \{ x^k \}_{k = 1, 2, \ldots}$ and any $t > 0$,
    \begin{equation*}
    \| \nabla f_{\xi} (x) - \nabla f (x) \| \leq \sigma_1 \| \nabla f (x) \|.
    \end{equation*}
\end{assumption}

\begin{assumption}
  \label{as:bound-f-dtm}
  The function value oracle $f_{\xi} (x)$ is unbiased,
    $\mathbb{E}_{\xi} [f_{\xi} (x)]$=$f (x)$, and for any iterate 
  $x \in \{x^k\}_{k=1,2,\ldots}$ and any $t>0$,
  \begin{equation*}
    | f_{\xi} (x) - f (x) | \leq \sigma_0 | f (x) - f (x^{\star}) | .
  \end{equation*}
\end{assumption}

In the following subsections, we establish the convergence guarantee for {\osgd} under the deterministically bounded oracles \cref{as:bound-g-dtm,as:bound-f-dtm}.
In \cref{sec:bs-feedback}, we introduce the proxy feedback $\hat{r}_{x, \xi}, \hat{h}_{x, \xi, \zeta}$, analyze its hindsight performance and establish its reduction to the convergence guarantee.
In \cref{sec:bs-regret}, we establish the regret bounds of doing OGD on feedback $r_{x, \xi, \zeta}, h_{x, \xi, \zeta}$ with respect to the proxy feedback. 
In \cref{sec:bs-conv}, we derive convergence guarantee under deterministic oracles follows by combining these results together.

\subsubsection{Feedback design}\label{sec:bs-feedback}

Define the proxy ratio and hypergradient loss as,
\begin{equation}\label{eq:hat-r}
  \hat{r}_{x, \xi} (P) = \tfrac{f (x - P \nabla f_{\xi} (x)) -
   f^{\star}}{f (x) - f^{\star}},\quad 
   \hat{h}_{x, \xi, \zeta} (P) =
   \tfrac{f_{\zeta}  (x - P \nabla f_{\xi} (x)) - f_{\zeta}
   (x)}{\|\nabla f (x)\|^2}.
\end{equation}
Notice that these feedbacks are only used for analysis purpose, not used in {\osgd} since their gradients are not available. We begin by analyzing the properties of feedback functions.

\begin{lemma}[Properties of feedback functions]
  \label{lem:feedback-smoothness}
  Under \cref{as:bound-g-dtm}, then for any iterate $x \in \{x^k \}_{k = 1, 2, \ldots}$ and for all $\xi$, the following statements hold.
  \begin{enumerate}
    \item $\hat{r}_{x, \xi}$ is convex and $2 L^2 (1 + \sigma_1)^2$-smooth.
    
    \item Suppose $\sigma_1 < 1$, $r_{x, \xi, \zeta}$ is convex, $2 L
    \left( L D + \tfrac{1 + \sigma_1}{1 - \sigma_1} \right)$-Lipschitz, and $2
    L^2$-smooth.
    
    \item $\hat{h}_{x, \xi, \zeta}$ is convex and $L (1 + \sigma_1)^2$-smooth.
    
    \item Suppose $\sigma_1 < 1$, $h_{x, \xi, \zeta}$ is convex, $\left(
    L D + \tfrac{1 + \sigma_1}{1 - \sigma_1} \right)$-Lipschitz, and
    $L$-smooth.
  \end{enumerate}
\end{lemma}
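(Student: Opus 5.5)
The plan is to treat all four feedbacks as compositions $P \mapsto \phi(x - P v)$, with $\phi \in \{f, f_\zeta\}$ and $v = \nabla f_\xi(x)$, divided by a positive scalar that does not depend on $P$. Convexity is then immediate for every item. The map $P \mapsto x - Pv$ is affine, $f$ and $f_\zeta$ are convex (each $f_i$ is, by \cref{as:smooth}), and the denominators $f(x)-f^\star$, $f_\xi(x)-f^\star$, $\|\nabla f(x)\|^2$, $\|\nabla f_\xi(x)\|^2$ are nonnegative constants in $P$. Adding or subtracting the constants $f^\star$ and $f_\zeta(x)$ does not affect convexity.

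\textbf{Smoothness.} The gradient in $P$ is $-\nabla\phi(x-Pv)v^\top/\mathrm{denom}$. Hence
\[\|\nabla\ell(P)-\nabla\ell(P')\|_F \le \tfrac{\|v\|}{\mathrm{denom}}\,L\,\|(P-P')v\| \le \tfrac{L\|v\|^2}{\mathrm{denom}}\|P-P'\|_F.\]
\begin{itemize}
\item For $\hat h$: \cref{as:bound-g-dtm} gives $\|v\|\le(1+\sigma_1)\|\nabla f(x)\|$, so the constant is $L(1+\sigma_1)^2$.
\item For $h$: the ratio $\|v\|^2/\mathrm{denom}$ equals $1$, giving $L$.
\item For $\hat r$: use $\|\nabla f(x)\|^2\le 2L(f(x)-f^\star)$ to get $2L^2(1+\sigma_1)^2$.
\item For $r$: the denominator is $f_\xi(x)-f^\star$. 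Convexity and $L$-smoothness of the sampled function, with $f_\xi$ attaining its minimum at $f^\star$ (as in the interpolation setting), give $\|\nabla f_\xi(x)\|^2\le 2L(f_\xi(x)-f^\star)$, hence $2L^2$.
\end{itemize}
This last point is where I would be careful about which minimum value appears.

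\textbf{Lipschitz constants.} Since $\|\nabla \ell(P)\|_F=\|\nabla\phi(x-Pv)\|\,\|v\|/\mathrm{denom}$, I bound $\|\nabla f_\zeta(x-Pv)\|$ by splitting
\[\|\nabla f_\zeta(x-Pv)\|\le \|\nabla f_\zeta(x-Pv)-\nabla f_\zeta(x)\| + \|\nabla f_\zeta(x)\|.\]
The first term is at most $L\|P\|\|v\|\le LD\|v\|$. Here I use $\|P\|\le \mathrm{diam}(\mathcal P)\le D$, which relies on $0\in\mathcal P$ as discussed before \cref{as:diam}. The second term is at most $(1+\sigma_1)\|\nabla f(x)\|\le\tfrac{1+\sigma_1}{1-\sigma_1}\|v\|$, using $\|v\|\ge(1-\sigma_1)\|\nabla f(x)\|$ from \cref{as:bound-g-dtm} with $\sigma_1<1$.

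For $h$ this gives $\|\nabla h\|_F\le (LD+\tfrac{1+\sigma_1}{1-\sigma_1})\|v\|^2/\|v\|^2$, as claimed. For $r$, I multiply by $\|v\|/(f_\xi(x)-f^\star)$ and use $\|v\|^2\le 2L(f_\xi(x)-f^\star)$. This yields the extra factor $2L$, where one power of $\|v\|$ cancels against the bracket.

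\textbf{Main obstacle.} I expect the delicate step to be the bookkeeping in the $r$ Lipschitz bound. The bracket carries only $\|v\|^2$ after the split, and I must make sure the ratio $\|v\|^2/(f_\xi(x)-f^\star)\le 2L$ is exactly what remains. I also need the oracle bound on $\nabla f_\zeta(x)$ for the independent sample $\zeta$, which \cref{as:bound-g-dtm} supplies uniformly over samples.
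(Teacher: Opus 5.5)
Your proposal is correct and follows the paper's proof essentially step for step. Convexity comes from composing the affine map $P\mapsto x-P\nabla f_\xi(x)$ with a convex function. Smoothness comes from the $L\|\nabla f_\xi(x)\|^2$-smoothness of $P\mapsto \phi(x-P\nabla f_\xi(x))$ divided by each denominator. The Lipschitz bounds come from the same split of $\|\nabla f_\zeta(x-P\nabla f_\xi(x))\|$ around $\nabla f_\zeta(x)$, combined with $\|\nabla f_\xi(x)\|\ge(1-\sigma_1)\|\nabla f(x)\|$. The two points you flag are exactly what the paper uses without comment: the constants for $r_{x,\xi,\zeta}$ need $\|\nabla f_\xi(x)\|^2\le 2L(f_\xi(x)-f^\star)$, i.e.\ $\min f_\xi=f^\star$, which the paper only states in \cref{lem:regret-exact-func}; and the bound $\|P\|\le D$ needs $0\in\mathcal{P}$.
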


\begin{lemma}[Hindsight feedback]
  \label{lem:hindsight}
  Under \cref{as:bound-g-dtm},
  there exists hindsight stepsize $P_{\star}^r, P^h_{\star}$ such that
  \begin{enumerate}
    \item Ratio feedback, strongly convex. Suppose $\mu > 0$, for any $\delta
    \in (0, 1)$, w.p. $1 - \frac{\delta}{2}$,
    \begin{equation}
      \textstyle \sum_{k = 1}^K \hat{r}_{x^k, \xi^k} (P^r_{\star}) \leq \big( 1 -
      \tfrac{1}{\kappa_{\star} \left( 1 + \frac{\kappa_{\star}}{\kappa}
      \sigma_1^2 \right)} \big) K + \tfrac{2 \kappa_{\star}
      \sigma_1^3}{(\kappa + \kappa_{\star} \sigma_1^2)^2} \sqrt{2 K \log
      \tfrac{2}{\delta}} . \label{eq:ratio-hindsight}
    \end{equation}
    where $\kappa_{\star} \leq \kappa$ is the condition number by applying preconditioner $P$ such that $\tfrac{1}{\kappa_{\star}} P^{- 1}
    \preceq \nabla^2 f (x) \preceq P^{- 1}$.
    
    \item Hypergradient feedback, convex. Suppose $\mu \geq 0$, for any
    $\delta \in (0, 1)$, w.p. $1 - \delta$,
    \begin{equation}
      \textstyle \sum_{k = 1}^K \hat{h}_{x^k, \xi^k, \zeta^k} (P^h_{\star}) \leq -
      \tfrac{K}{2 L (1 + \sigma_1^2)} + \left[ \tfrac{\sigma_1^3}{L (1 +
      \sigma_1^2)^2} + \tfrac{\sigma_1 (1 + \sigma_1)}{L (1 + \sigma_1^2)}
      \right] \sqrt{2 K \log \tfrac{1}{\delta}} . \label{eq:hyper-hindsight}
    \end{equation}
  \end{enumerate}
\end{lemma}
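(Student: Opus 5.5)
The plan is to fix an explicit hindsight preconditioner, bound the \emph{conditional expectation} of each proxy loss by a deterministic constant, and then control the sum of the deviations with Azuma--Hoeffding. The deviations are bounded martingale differences because \cref{as:bound-g-dtm} bounds the gradient noise deterministically, relative to $\|\nabla f(x^k)\|$. Let $\mathcal{F}_{k}$ be the $\sigma$-algebra generated by everything up to $x^k$. The hindsight stepsize is deterministic and $x^k$ is $\mathcal{F}_k$-measurable, while $\xi^k,\zeta^k$ are fresh. Hence $\hat r_{x^k,\xi^k}(P_\star) - \mathbb{E}[\hat r_{x^k,\xi^k}(P_\star)\mid\mathcal{F}_k]$ is a martingale difference sequence, and likewise for $\hat h$.

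\emph{Ratio feedback.} Let $P$ be the preconditioner with $\tfrac{1}{\kappa_\star}P^{-1}\preceq\nabla^2 f\preceq P^{-1}$, and set $P^r_\star = aP$ with $a\in(0,1]$ to be tuned. Write $g=\nabla f_{\xi}(x)=\nabla f(x)+e$. Smoothness in the $P^{-1}$-norm gives
\[
f(x-aPg)\le f(x)-a\langle\nabla f,P g\rangle+\tfrac{a^2}{2}g^\top P g .
\]
Taking $\mathbb{E}_\xi$ and using $\mathbb{E} e=0$ yields the conditional-mean bound
\[
-\big(a-\tfrac{a^2}{2}\big)\nabla f^\top P\nabla f+\tfrac{a^2}{2}\,\mathbb{E}\, e^\top P e .
\]
I then use three facts: relative strong convexity gives $f(x)-f^\star\le\tfrac{\kappa_\star}{2}\nabla f^\top P\nabla f$; the noise satisfies $e^\top Pe\le\|P\|\sigma_1^2\|\nabla f\|^2$; and $\|P\|\|\nabla f\|^2$ can be compared with $\nabla f^\top P\nabla f$ at the cost of a factor $\kappa_\star/\kappa$, since $P\succeq \tfrac{1}{L}I$ up to normalization and $\|\nabla f\|^2\le 2L(f-f^\star)$. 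Dividing by $f(x)-f^\star$ gives $\mathbb{E}[\hat r\mid\mathcal{F}_k]\le 1-\tfrac{1}{\kappa_\star}\big(2a-a^2(1+\tfrac{\kappa_\star}{\kappa}\sigma_1^2)\big)/2\cdot 2$. Optimizing, $a=1/(1+\tfrac{\kappa_\star}{\kappa}\sigma_1^2)$ produces the leading term $1-\tfrac{1}{\kappa_\star(1+\frac{\kappa_\star}{\kappa}\sigma_1^2)}$.

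For the deviation, the only random parts are the linear term $-a\langle\nabla f,Pe\rangle$ and the quadratic term $\tfrac{a^2}{2}(e^\top Pe-\mathbb{E}e^\top Pe)$. Both are bounded by $\sigma_1$- and $\sigma_1^2$-multiples of $\nabla f^\top P\nabla f$-type quantities, which are in turn bounded relative to $f-f^\star$. Collecting the worst-case range $c_k$ of each increment gives the coefficient $\tfrac{2\kappa_\star\sigma_1^3}{(\kappa+\kappa_\star\sigma_1^2)^2}$. Azuma--Hoeffding with failure probability $\delta/2$ then gives \eqref{eq:ratio-hindsight}. I expect matching this exact constant to be fiddly, but the structure is routine.

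\emph{Hypergradient feedback.} Take $P^h_\star=\alpha I$ with $\alpha=\tfrac{1}{L(1+\sigma_1^2)}$. Since $f_\zeta$ is convex and $L$-smooth,
\[
f_\zeta(x-\alpha g)-f_\zeta(x)\le-\alpha\langle\nabla f_\zeta(x),g\rangle+\tfrac{L\alpha^2}{2}\|g\|^2 .
\]
Take the conditional expectation over the independent $\zeta,\xi$, and use $\mathbb{E}\|g\|^2\le(1+\sigma_1^2)\|\nabla f\|^2$. After dividing by $\|\nabla f\|^2$ this gives $-\alpha+\tfrac{L\alpha^2}{2}(1+\sigma_1^2)=-\tfrac{1}{2L(1+\sigma_1^2)}$. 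For the increments, I center the upper bound, which is legitimate since summing upper bounds and then applying Azuma to their centered versions still yields a valid one-sided bound. The centered parts are $-\alpha\langle\nabla f_\zeta,g\rangle+\alpha\|\nabla f\|^2$ and $\tfrac{L\alpha^2}{2}(\|g\|^2-\mathbb{E}\|g\|^2)$. The first is bounded by $\alpha\sigma_1(1+\sigma_1)\|\nabla f\|^2$-type terms, using $\|\nabla f_\zeta\|\le(1+\sigma_1)\|\nabla f\|$. The second is bounded by $\tfrac{L\alpha^2}{2}\cdot O(\sigma_1)\|\nabla f\|^2$, which after substituting $\alpha$ produces the $\tfrac{\sigma_1^3}{L(1+\sigma_1^2)^2}$ term. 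Azuma at level $\delta$ gives \eqref{eq:hyper-hindsight}.

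The main obstacle is the ratio case. One must relate $\|P\|\|\nabla f\|^2$ to $f-f^\star$ with exactly the $\kappa_\star/\kappa$ factor, which requires fixing the normalization of $P$ (for example $P^{-1}\succeq\nabla^2 f$ with $\lambda_{\max}(P)$ tied to $1/\mu$). One must also bound the increments uniformly using only deterministic noise control at the iterates.
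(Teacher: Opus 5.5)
The gap is in part (i): the step meant to produce the factor $\frac{\kappa_\star}{\kappa}$ cannot work, and no choice of normalization repairs it. Otherwise your skeleton matches the paper's: hindsight stepsizes $P^r_\star=aP$ with $a=1/(1+s)$, $s=\frac{\kappa_\star}{\kappa}\sigma_1^2$, and $P^h_\star=\frac{1}{L(1+\sigma_1^2)}I$, a one-step smoothness bound, relative strong convexity $f-f^\star\le\frac{\kappa_\star}{2}\nabla f^\top P\nabla f$, and Azuma.

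Your argument needs $\mathbb{E}\,e^\top Pe\le\frac{\kappa_\star}{\kappa}\sigma_1^2\,\nabla f^\top P\nabla f$. This fails for two reasons.
\begin{enumerate}
\item The inequality $\|P\|\,\|\nabla f\|^2\ge\nabla f^\top P\nabla f$ always holds, so no comparison can yield a factor below $1$.
\item \cref{as:bound-g-dtm} permits $e=\pm\sigma_1\|\nabla f(x)\|\,v$ with $v$ a unit top eigenvector of $P$. For that noise, $\mathbb{E}\,e^\top Pe=\|P\|\sigma_1^2\|\nabla f\|^2\ge\sigma_1^2\,\nabla f^\top P\nabla f$.
\end{enumerate}
So the best constant available is $\lambda_{\max}(P)/\lambda_{\min}(P)\ge 1$, never $\frac{\kappa_\star}{\kappa}<1$. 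Also, the sandwich $\frac{1}{\kappa_\star}P^{-1}\preceq\nabla^2 f\preceq P^{-1}$ gives $\frac{1}{\kappa_\star L}I\preceq P\preceq\frac{1}{\mu}I$, not $P\succeq\frac1L I$.

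The paper's own proof gets $\frac{\kappa_\star}{\kappa}$ from $\frac{c}{\kappa_\star\mu}I\preceq P^r_\star\preceq\frac{c}{L}I$, which has $\mu$ and $L$ interchanged. Since $\kappa_\star\mu\le L$, its two halves are compatible only when $\kappa_\star=\kappa$ and $P^r_\star=\frac{c}{L}I$. Its replacement of $\langle e,P^r_\star\nabla f\rangle$ by $\frac{c}{L}\langle e,\nabla f\rangle$ likewise presumes a scalar stepsize. So you have found a real hole, not a normalization detail. Carried out correctly, your argument proves (i) with $\frac{\lambda_{\max}(P)}{\lambda_{\min}(P)}\sigma_1^2\ (\le\kappa\kappa_\star\sigma_1^2)$ in place of $\frac{\kappa_\star}{\kappa}\sigma_1^2$. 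The two agree only in the scalar case.

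Separately, your martingale bookkeeping would not give the $\sigma_1^3$-order coefficients.
\begin{enumerate}
\item In (i), expanding $\frac{a^2}{2}g^\top Pg$ contributes $+a^2\langle\nabla f,Pe\rangle$. So the linear noise term is $-a(1-a)\langle\nabla f,Pe\rangle$, not $-a\langle\nabla f,Pe\rangle$. The factor $1-a=\frac{s}{1+s}$ is what makes the coefficient cubic in $\sigma_1$.
\item In (ii), both $-\alpha\langle\nabla f_\zeta,g\rangle$ and $\frac{L\alpha^2}{2}\|g\|^2$ contain $\langle\nabla f,e_\xi\rangle$. Only after merging them into $-\alpha(1-\alpha L)\langle\nabla f,e_\xi\rangle=-\frac{\sigma_1^2}{L(1+\sigma_1^2)^2}\langle\nabla f,e_\xi\rangle$ does the $\frac{\sigma_1^3}{L(1+\sigma_1^2)^2}$ term appear. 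On its own, $\frac{L\alpha^2}{2}(\|g\|^2-\mathbb{E}\|g\|^2)$ has range of order $\sigma_1\|\nabla f\|^2/L$, not $\sigma_1^3$.
\item Centering the quadratic terms $e^\top Pe$ and $\|e_\xi\|^2$ adds an extra $O(\sigma_1^2)$ range to Azuma that the statement does not contain.
\end{enumerate}

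The paper instead bounds these quadratic terms pointwise by their worst case via $\|e\|\le\sigma_1\|\nabla f\|$. This costs nothing, since you already used that worst case for their expectation. It then applies Azuma only to the linear terms, for instance $\langle e_\xi,\nabla f\rangle/\|\nabla f\|^2$ (range $\sigma_1$) and $\langle g,e_\zeta\rangle/\|\nabla f\|^2$ (range $\sigma_1(1+\sigma_1)$). With that change your (ii) becomes the paper's proof and yields \eqref{eq:hyper-hindsight}, up to $\log\frac{2}{\delta}$ versus $\log\frac{1}{\delta}$ from splitting $\delta$ between the two Azuma applications.

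For (i), even the paper's bookkeeping gives the coefficient $\frac{2s\sigma_1}{(1+s)^2}=\frac{2\kappa\kappa_\star\sigma_1^3}{(\kappa+\kappa_\star\sigma_1^2)^2}$. That is $\kappa$ times the constant displayed in \eqref{eq:ratio-hindsight}, so that exact constant is not a sensible target.
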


\begin{remark}
  The hindsight convergence of ratio feedback \eqref{eq:ratio-hindsight}
  suggests that, when the underlying condition number $\kappa$ is fixed,
  improving the preconditioner, or decreasing $\kappa_{\star}$, reduces the
  impact of stochastic gradient noise $\sigma_1$.
\end{remark}

\subsubsection{Regret analysis}\label{sec:bs-regret}

Suppose $\{ P_1, \ldots, P_k \}$ is obtained from {\osgd}, we have the
following regret guarantee for both ratio and hypergradient feedback.

\begin{lemma}[Regret bounds]
  \label{lem:regret-exact-func} 
  Under \cref{as:bound-g-dtm,as:bound-f-dtm}. Then for any preconditioner $P_{\star} \in \mathcal{P}$,
  \begin{enumerate}
    \item \emph{Ratio feedback.} Suppose $\sigma_1 < 1$, and $f_{\star} = \min_x
    f_{\xi} (x)$ for all $\xi$, for any $\delta \in (0, 1)$, w.p. $1 -
    \frac{\delta}{2}$,
    \begin{align}
      \textstyle \sum_{k = 1}^K (\hat{r}_{x^k, \xi^k} (P_k) - \hat{r}_{x^k, \xi^k}
      (P_{\star})) &{}\leq 2 L D \left( L D + \tfrac{1 + \sigma_1}{1 -
      \sigma_1} \right) \sqrt{K} + 2 \sigma_0 L D \left( L D + \tfrac{1}{1 -
      \sigma_1} \right) K \nonumber\\
      &{}\quad + 2 \sigma_1 L D \left( L D + \tfrac{1}{1 - \sigma_1} \right)
      \sqrt{2 K \log \tfrac{2}{\delta}} . \label{eq:regret-ratio} 
    \end{align}
    When $\sigma_0 = 0$, the function value is exact, the regret bound is
    sublinear,
    \[ \textstyle \sum_{k = 1}^K (\hat{r}_{x^k, \xi^k} (P_k) - \hat{r}_{x^k, \xi^k}
      (P_{\star})) \leq
       2 L D \left( L D + \tfrac{1 + \sigma_1}{1 - \sigma_1} \right) \sqrt{K}
       + 2 \sigma_1 L D \left( L D + \tfrac{1}{1 - \sigma_1} \right) \sqrt{2 K
       \log \tfrac{2}{\delta}} . \]
    \item \emph{Hypergradient feedback.} Suppose $\sigma_1 < 1$,
    \begin{align}
      \textstyle \sum_{k = 1}^K (\hat{h}_{x^k, \xi^k} (P_k) - \hat{h}_{x^k, \xi^k}
      (P_{\star})) &{}\leq D \left( L D + \tfrac{1 + \sigma_1}{1 - \sigma_1}
      \right) \sqrt{K} + 3 \sigma_1 D \left( L D + \tfrac{1 + \sigma_1}{1 -
      \sigma_1} \right) K. \label{eq:regret-hyper} 
    \end{align}
  \end{enumerate}
\end{lemma}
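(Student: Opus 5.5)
The plan is to decompose each regret into three parts: the regret of online gradient descent on the \emph{observed} losses $\ell_k \in \{r_{x^k,\xi^k,\zeta^k}, h_{x^k,\xi^k,\zeta^k}\}$, plus the gap between observed and proxy losses at $P_k$ and at $P_\star$. Concretely, for each $k$ I write
\[
\hat{\ell}_k(P_k) - \hat{\ell}_k(P_\star) = [\ell_k(P_k) - \ell_k(P_\star)] + [(\hat{\ell}_k - \ell_k)(P_k) - (\hat{\ell}_k - \ell_k)(P_\star)].
\]
By \cref{lem:feedback-smoothness} (ii),(iv), $\ell_k$ is convex and $G$-Lipschitz on $\mathcal{P}$, with $G = 2L(LD + \tfrac{1+\sigma_1}{1-\sigma_1})$ for the ratio feedback and $G = LD + \tfrac{1+\sigma_1}{1-\sigma_1}$ for the hypergradient. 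The standard projected OGD bound with $\eta = D/(G\sqrt{K})$ then gives $\sum_k \ell_k(P_k) - \ell_k(P_\star) \le DG\sqrt{K}$. This produces the leading $\sqrt{K}$ term in both \eqref{eq:regret-ratio} and \eqref{eq:regret-hyper}.

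\textbf{Ratio feedback.} Since $\ell_k$ and $\hat{\ell}_k$ differ in both numerator and denominator, I introduce the intermediate loss $\bar r_k(P) = \tfrac{f_{\zeta^k}(x^k - P\nabla f_{\xi^k}(x^k)) - f^\star}{f(x^k)-f^\star}$. Write $y(P) = x^k - P\nabla f_{\xi^k}(x^k)$.
\begin{enumerate}
\item \emph{Denominator swap} ($r_k \to \bar r_k$). Here I use the assumption $f_\star = \min f_\xi$ together with \cref{as:bound-f-dtm} to bound the relative error of the denominator by $\sigma_0$. I will then bound $f_{\zeta}(y(P)) - f^\star$ via smoothness around $x^k$ by $O(LD(LD + \tfrac{1}{1-\sigma_1}))(f(x^k)-f^\star)$ uniformly over $P\in\mathcal{P}$, using $\|\nabla f_\xi\| \le (1+\sigma_1)\|\nabla f\|$ and $\|\nabla f\|^2 \le 2L(f-f^\star)$. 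Only the difference between $P_k$ and $P_\star$ matters, which should yield the $2\sigma_0 LD(LD+\tfrac{1}{1-\sigma_1})K$ term.
\item \emph{Numerator swap} ($\bar r_k \to \hat r_k$). The difference $d_k = (\bar r_k - \hat r_k)(P_k) - (\bar r_k - \hat r_k)(P_\star)$ has conditional mean zero given $\mathcal{F}_{k}$ and $\xi^k$, because $\zeta^k$ is independent and $f_\zeta$ is unbiased, while $P_k$ is predictable. When $\sigma_0 = 0$, $d_k \equiv 0$; otherwise I bound $|d_k|$ by a constant times $\sigma_0$ and fold it into the $\sigma_0 K$ term.
\item \emph{Gradient-noise martingale.} The remaining $\sigma_1$-dependent $\sqrt{2K\log(2/\delta)}$ term comes from Azuma--Hoeffding applied to the fluctuation of the OGD linearization. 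Writing $\nabla f_\xi = \nabla f + e$ with $\|e\| \le \sigma_1\|\nabla f\|$, the piece of $\ell_k(P_k) - \ell_k(P_\star)$ that is linear in $e$ has zero mean and increments bounded by $2\sigma_1 LD(LD + \tfrac{1}{1-\sigma_1})$.
\end{enumerate}

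\textbf{Hypergradient feedback.} Here the numerators of $h_k$ and $\hat h_k$ coincide and only the denominators differ: $\|\nabla f_\xi\|^2$ versus $\|\nabla f\|^2$. Hence
\[
\hat h_k = h_k \cdot \tfrac{\|\nabla f_{\xi^k}\|^2}{\|\nabla f\|^2},
\]
and the ratio lies in $[(1-\sigma_1)^2, (1+\sigma_1)^2]$, so it differs from $1$ by at most $3\sigma_1$ when $\sigma_1 < 1$. Since $|h_k(P_k) - h_k(P_\star)| \le GD$ by the Lipschitz bound, the difference term is at most $3\sigma_1 D(LD + \tfrac{1+\sigma_1}{1-\sigma_1})$ per step. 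Summing gives the linear term in \eqref{eq:regret-hyper} deterministically, with no concentration step needed.

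\textbf{Main obstacle.} The hardest part is step (i) of the ratio case: obtaining the uniform-in-$P$ bound on $(f_\zeta(y(P)) - f^\star)/(f(x^k) - f^\star)$ with the exact constant $LD + \tfrac{1}{1-\sigma_1}$. A related subtlety is that $P_\star$ is fixed in advance, so the martingale argument for it is straightforward, whereas $P_k$ depends on the past; the conditional-mean-zero structure must therefore be applied carefully with respect to the filtration generated by $(\xi^j,\zeta^j)_{j<k}$.
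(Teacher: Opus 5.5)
\textbf{Verdict.} Your hypergradient part is correct and matches the paper, but the ratio part has a genuine gap.

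\textbf{Hypergradient part.} Since $h_k$ and $\hat h_k$ share a numerator, $\hat h_k=\rho_k h_k$ with $|\rho_k-1|\le 2\sigma_1+\sigma_1^2\le 3\sigma_1$. Combining OGD regret on $h_k$ with the Lipschitz bound then gives \eqref{eq:regret-hyper} with exactly the stated constants. This is essentially the paper's argument.

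\textbf{Ratio part: no source for the $\sigma_1$ term.} Your split
\[
\hat r_k(P_k)-\hat r_k(P_\star)=[r_k(P_k)-r_k(P_\star)]+[(\bar r_k-r_k)(P_k)-(\bar r_k-r_k)(P_\star)]-d_k
\]
is an exact identity. After the OGD regret and your two swaps, nothing is left for step (iii), so the $\sigma_1\sqrt{2K\log(2/\delta)}$ term has no source. The noise you invoke there, $e=\nabla f_{\xi^k}(x^k)-\nabla f(x^k)$, is also not mean-zero in this lemma, because $\hat r_{x^k,\xi^k}$ is defined conditionally on $\xi^k$. Only $\zeta^k$ averages out.

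\textbf{Ratio part: $d_k$ is not controlled.} The term $d_k$ requires controlling $f_{\zeta^k}-f$ at $y(P_\star)=x^k-P_\star\nabla f_{\xi^k}(x^k)$. That point is not an iterate, so \cref{as:bound-f-dtm} says nothing there, and ``$d_k\equiv0$ when $\sigma_0=0$'' does not follow. Expanding around $y(P_k)=x^{k+1}$:
\begin{itemize}
\item The first-order piece is exactly the $\zeta^k$-gradient-noise martingale that produces the paper's $\sigma_1$ term.
\item The second-order remainder difference is only bounded by $\tfrac{L}{2}\|(P_k-P_\star)\nabla f_{\xi^k}(x^k)\|^2/(f(x^k)-f^\star)\le L^2D^2(1+\sigma_1)^2$. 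This is neither $\sigma_0$- nor $\sigma_1$-small, so at best Azuma adds an extra $L^2D^2(1+\sigma_1)^2\sqrt{2K\log(2/\delta)}$ term that is not in \eqref{eq:regret-ratio}.
\end{itemize}
Even if you assume the oracle bound holds everywhere, you only get $|d_k|\le\sigma_0(\hat r_k(P_k)+\hat r_k(P_\star))$. That carries an $O(\sigma_0)$ piece without the $LD$ factor, so the stated coefficient is not recovered. Your denominator swap, done with numerator $f_{\zeta^k}$, also yields $\tfrac{1+\sigma_1}{1-\sigma_1}$ instead of $\tfrac{1}{1-\sigma_1}$.

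\textbf{The missing idea.} Never evaluate anything at the comparator. Use convexity of the proxy, $\hat r_k(P_k)-\hat r_k(P_\star)\le\langle\nabla\hat r_k(P_k),P_k-P_\star\rangle$, and split $\nabla\hat r_k(P_k)=\nabla r_k(P_k)+(\nabla\hat r_k(P_k)-\nabla r_k(P_k))$. The first part gives the OGD $\sqrt K$ term. With $\tau_k=\tfrac{f_{\xi^k}(x^k)-f^\star}{f(x^k)-f^\star}$, the second part equals
\[
\tfrac{\bigl(\nabla f_{\zeta^k}(x^{k+1})-\nabla f(x^{k+1})\bigr)\nabla f_{\xi^k}(x^k)^\top}{f_{\xi^k}(x^k)-f^\star}-(\tau_k-1)\tfrac{\nabla f(x^{k+1})\nabla f_{\xi^k}(x^k)^\top}{f_{\xi^k}(x^k)-f^\star}.
\]
Pair each piece with $P_k-P_\star$:
\begin{itemize}
\item The first piece is a martingale difference in $\zeta^k$, bounded by $2\sigma_1LD(LD+\tfrac{1}{1-\sigma_1})$, so Azuma applies.
\item The second piece is bounded deterministically by $2\sigma_0LD(LD+\tfrac{1}{1-\sigma_1})$.
\end{itemize}
Both bounds use $f_{\xi^k}(x^k)-f^\star\ge\tfrac{1}{2L}\|\nabla f_{\xi^k}(x^k)\|^2$ and $\|\nabla f(x^{k+1})\|\le(LD+\tfrac{1}{1-\sigma_1})\|\nabla f_{\xi^k}(x^k)\|$. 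All oracle quantities are then evaluated only at the iterates $x^k$ and $x^{k+1}$, and $P_\star$ enters only through $\|P_k-P_\star\|_F\le D$.
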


\subsubsection{Iteration complexity}\label{sec:bs-conv}

Before establishing convergence under high-probability assumption, we first
prove convergence under the deterministic assumption.

\begin{proposition}[Convergence with deterministically bounded oracles]
  \label{prop:conv-dtm}
  Under \cref{as:bound-g-dtm,as:bound-f-dtm}. Running {\sosgm} for $K$
  iterations, then for any $\delta \in (0, 1)$, w.p. $1 - \delta$,
  \begin{enumerate}
    \item {\emph{Ratio feedback, strongly convex.}} Suppose $\mu > 0$, then
    for any $\sigma_1 < \tfrac{1}{2}$, $\sigma_0 =\mathcal{O} \left(
    \tfrac{1}{L^2 D^2 \kappa_{\star}} \right)$,
    \[ \tfrac{f (x^{K}) - f (x^{\star})}{f (x^0) - f (x^{\star})} \leq
       \left( 1 - \tfrac{1}{2 \left( 1 + \tfrac{\kappa_{\star}}{\kappa}
       \sigma_1^2  \right) \kappa_{\star}} + \mathcal{O} \left(
       \tfrac{1}{\sqrt{K}} \sqrt{\log \tfrac{1}{\delta}} \right) \right)^K .
    \]
    \item {\emph{Hypergradient feedback, strongly convex.}} Suppose $\mu > 0$,
    then for $\sigma_1 =\mathcal{O} \left( \tfrac{1}{L^2 D^2} \right)$,
    $\sigma_0 =\mathcal{O} \left( \tfrac{1}{\kappa} \right)$,
    \[ \tfrac{f (x^{K}) - f (x^{\star})}{f (x^0) - f (x^{\star})} \leq
       \left( 1 - \tfrac{1}{2 (1 + \sigma_1^2 ) \kappa} + \mathcal{O} \left(
       \tfrac{1}{\sqrt{K}} \sqrt{\log \tfrac{1}{\delta}} \right) \right)^K .
    \]
    \item {\emph{Hypergradient feedback, convex.}} Suppose $\mu = 0$, then for
    $\sigma_1 =\mathcal{O} \left( \tfrac{1}{L^2 D^2} \right)$, $\sigma_0 = 0$,
    \[ f (x^{K}) - f (x^{\star}) \leq \min \left\{ \tfrac{\Delta^2}{\max
       \left\{ K \left( \frac{1}{4 L (1 + \sigma_1^2)} - \mathcal{O} \left(
       \tfrac{1}{\sqrt{K}} \sqrt{\log \tfrac{1}{\delta}} \right) \right), 0
       \right\}}, f (x^0) - f^{\star} \right\} . \]
  \end{enumerate}
  The exact expression is shown in the proof.
\end{proposition}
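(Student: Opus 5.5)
We combine the hindsight bounds of \cref{lem:hindsight} with the regret bounds of \cref{lem:regret-exact-func}, and convert the resulting bound on the average proxy feedback into a bound on the iterates. We take a union bound over the two probability-$(1-\tfrac{\delta}{2})$ events, one from each lemma.

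\emph{Ratio feedback (i).} By the definition \eqref{eq:hat-r}, $\hat r_{x^k,\xi^k}(P_k) = \tfrac{f(x^{k+1})-f^\star}{f(x^k)-f^\star}$, since $x^{k+1}=x^{k+1/2}$ for ratio feedback. Telescoping the product and applying the AM--GM inequality gives
\[
\tfrac{f(x^K)-f^\star}{f(x^0)-f^\star}=\textstyle\prod_k \hat r_{x^k,\xi^k}(P_k)\le \big(\tfrac1K\sum_k \hat r_{x^k,\xi^k}(P_k)\big)^K .
\]
Next we split $\sum_k \hat r(P_k)=\sum_k \hat r(P^r_\star)+\text{Regret}$. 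The first term is bounded by \eqref{eq:ratio-hindsight} and the second by \eqref{eq:regret-ratio} with $P_\star=P^r_\star$. Dividing by $K$, the $\sqrt K$ terms become $\mathcal O(\sqrt{\log(1/\delta)/K})$, because $LD=\mathcal O(1)$ and $\sigma_1<\tfrac12$ keeps $\tfrac{1+\sigma_1}{1-\sigma_1}\le 3$. The term linear in $K$ is $2\sigma_0 LD(LD+2)$. Choosing $\sigma_0\le \tfrac{c}{L^2D^2\kappa_\star}$ makes it at most half of $\tfrac{1}{\kappa_\star(1+\kappa_\star\sigma_1^2/\kappa)}$, which gives the factor $\tfrac12$.

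\emph{Hypergradient feedback (ii), (iii).} Here the null step ensures $f_{\zeta^k}(x^{k+1})\le f_{\zeta^k}(x^k)$. We need to relate $\hat h$ to true progress. When $\sigma_0=0$, the null step gives $f(x^{k+1})-f(x^k)\le \min\{\hat h_k(P_k),0\}\,\|\nabla f(x^k)\|^2$. For $\sigma_0>0$ we pay an extra $2\sigma_0(f(x^k)-f^\star)$ per step. Summing \eqref{eq:hyper-hindsight} with \eqref{eq:regret-hyper} shows that $\tfrac1K\sum_k \hat h_k(P_k)\le -\tfrac{1}{2L(1+\sigma_1^2)}+3\sigma_1 D(LD+3)+\mathcal O(1/\sqrt K)$. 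With $\sigma_1=\mathcal O(1/(L^2D^2))$ the linear-regret term is at most $\tfrac{1}{4L(1+\sigma_1^2)}$, so the average is at most $-\tfrac{1}{4L(1+\sigma_1^2)}+\mathcal O(1/\sqrt K)$.

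For (ii), we use the PL inequality $\|\nabla f(x^k)\|^2\ge 2\mu(f(x^k)-f^\star)$. This gives the per-step ratio $\tfrac{f(x^{k+1})-f^\star}{f(x^k)-f^\star}\le 1+2\mu\min\{\hat h_k,0\}+2\sigma_0$. We average with AM--GM as above, and the condition $\sigma_0=\mathcal O(1/\kappa)$ absorbs the $2\sigma_0$ term.

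For (iii), we use $\|\nabla f(x^k)\|\ge (f(x^k)-f^\star)/\Delta$, with $\Delta$ bounding the distance to the optimum; monotonicity keeps the iterates in the initial sublevel set. Writing $e_k=f(x^k)-f^\star$, this gives $e_{k+1}\le e_k+\hat h_k e_k^2/\Delta^2$. We then use the standard inverse argument: $\tfrac{1}{e_{k+1}}-\tfrac1{e_k}\ge -\hat h_k/\Delta^2$, using $e_{k+1}\le e_k$. Summing yields $1/e_K\ge \tfrac{K}{\Delta^2}\big(\tfrac{1}{4L(1+\sigma_1^2)}-\mathcal O(1/\sqrt K)\big)$, and together with monotonicity this gives the stated $\min$.

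\emph{Main obstacle.} The delicate step is (ii)/(iii): \cref{lem:hindsight} and \cref{lem:regret-exact-func} are stated for the proxy $\hat h$, normalized by $\|\nabla f\|^2$ and evaluated on $f_\zeta$. Turning this into a decrease of $f$ itself while the null step is taken on $f_\zeta$ requires care with the $\sigma_0$ error and with the clipping $\min\{\hat h,0\}$. Clipping only helps, since $\min\{\hat h_k,0\}\le\hat h_k$, so summing the unclipped bound remains valid. Throughout, the hindsight stepsizes must lie in $\mathcal P$, which \cref{as:diam} guarantees.
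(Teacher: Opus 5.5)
Your proposal is correct in substance and follows the paper's proof essentially step for step. That means AM--GM on the product of the ratios $\hat r_{x^k,\xi^k}(P_k)$ plus \cref{lem:hindsight} and \cref{lem:regret-exact-func} for (i), the null step with $\|\nabla f(x^k)\|^2\ge 2\mu(f(x^k)-f^\star)$ and AM--GM for (ii), and the reciprocal telescoping $\tfrac{1}{e_k}-\tfrac{1}{e_{k+1}}\le \min\{\hat h_k,0\}/\Delta^2$ (with $e_k=f(x^k)-f^\star$) for (iii).

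One bookkeeping fix is needed in (ii). The null step only gives $f_{\zeta^k}(x^{k+1})\le f_{\zeta^k}(x^k)$, not $f(x^{k+1})\le f(x^k)$, so you cannot bound the oracle error $\sigma_0(e_{k+1}+e_k)$ by $2\sigma_0 e_k$. Instead, move $\sigma_0 e_{k+1}$ to the left-hand side to get the per-step ratio $1+2\mu\min\{\hat h_k,0\}+\tfrac{2\sigma_0}{1-\sigma_0}$, as the paper does. You must also leave room for this term by making the linear-regret term at most $\tfrac{1}{8L(1+\sigma_1^2)}$ rather than $\tfrac{1}{4L(1+\sigma_1^2)}$; otherwise no slack remains to absorb $\sigma_0$, and the stated rate $1-\tfrac{1}{2(1+\sigma_1^2)\kappa}$ is missed.

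Finally, $P^r_\star,P^h_\star\in\mathcal P$ is an implicit hypothesis here, which the paper also assumes tacitly. \cref{as:diam} only bounds the diameter of $\mathcal P$ and does not supply it.
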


\subsection{Proof of \cref{thm:conv-subG}}
Now we prove \cref{thm:conv-subG} from \cref{prop:conv-dtm}. We use the general
reduction framework from bounded stochastic oracles to light-tailed oracles.
We rephrase the result for completeness.

\begin{lemma}[Reduction framework from bounded to light-tailed oracles \cite{attia2024note}]
  \label{lem:bdd-ltail}
  Given an algorithm $\mathcal{A}$, number of rounds $K$,
  and a sub-Gaussian sampling oracle $\mathcal{O}$, there exists a $B$-bounded
  sampling oracle $\widetilde{\mathcal{O}}$ with
  \[ B = 4 \sigma \sqrt{\max \left\{ \log \tfrac{4 K}{\delta}, 1 \right\}}, \]
  such that $\mathbb{E} [\mathcal{O} (x)] =\mathbb{E} [\widetilde{\mathcal{O}}
  (x)]$ for all queries $x \in \mathcal{X}$, and with probability at least $1
  - \delta$, the outputs of algorithm $\mathcal{A}$ with $\mathcal{O}$ and
  $\widetilde{\mathcal{O}}$ are identical.
\end{lemma}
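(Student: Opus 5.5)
The plan is to build $\widetilde{\mathcal{O}}$ explicitly as a coupled modification of $\mathcal{O}$, query by query, and then union bound over the $K$ rounds. Fix a query $x$ and write $\mathcal{O}(x) = \mu(x) + Z$, where $\mu(x) = \mathbb{E}[\mathcal{O}(x)]$ and the noise $Z$ satisfies the sub-Gaussian tail $\mathbb{P}\{\|Z\| \ge t\} \le 2\exp(-t^2/(2\sigma^2))$, as in \cref{as:bound-g,as:bound-f}. Set the truncation level $\tau = B/2 = 2\sigma\sqrt{\max\{\log\tfrac{4K}{\delta},1\}}$. The \emph{bad event} is $E = \{\|Z\| > \tau\}$, and its probability is $p = \mathbb{P}(E) \le 2\exp(-\tau^2/(2\sigma^2)) \le 2(\delta/4K)^2 \le \delta/(2K)$.

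\emph{Step 1 (truncated bias is tiny).} Since $\mathbb{E}Z = 0$, the bias introduced by truncation is $m \assign \mathbb{E}[Z\mathbf{1}_{E^c}] = -\mathbb{E}[Z\mathbf{1}_E]$. Tail integration gives
\[
\|m\| \le \tau p + \int_\tau^\infty \mathbb{P}\{\|Z\|>s\}\,ds \le \tau p + \sqrt{2\pi}\,\sigma\, e^{-\tau^2/(2\sigma^2)} .
\]
With this choice of $\tau$, the right-hand side is at most $B\cdot\tfrac{\delta}{4K}$.

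\emph{Step 2 (bounded, mean-preserving correction).} I will use auxiliary independent randomness to select an event $F\subseteq E^c$ with $\mathbb{P}(F) = q \assign \|m\|/B \le \delta/(4K)$, which is possible by splitting the atomless auxiliary uniform variable. Then define
\[
\widetilde{\mathcal{O}}(x) = \mu(x) + Z\mathbf{1}_{E^c\setminus F} + v\,\mathbf{1}_{E\cup F},
\]
where the constant vector $v$ is chosen to cancel the mean. The required identity is
\[
\mathbb{E}[Z\mathbf{1}_{E^c\setminus F}] + (p+q)v = 0, \qquad \text{i.e.}\quad v = -\frac{m - \mathbb{E}[Z\mathbf{1}_F]}{p+q}.
\]
To control $\|v\|$, I will take $F$ independent of $Z$ given $E^c$ (a thinning of $E^c$). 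Then $\mathbb{E}[Z\mathbf{1}_F]$ is proportional to $m$, with norm at most $q\|m\|/(1-p)$. This yields $\|v\| \lesssim \|m\|/q = B$, up to a constant that the slack between $\tau=B/2$ and $B$ absorbs. Both branches are then bounded by $B$: on $E^c\setminus F$ the noise satisfies $\|Z\|\le\tau\le B$, and on $E\cup F$ it equals $v$ with $\|v\|\le B$. By construction $\mathbb{E}[\widetilde{\mathcal{O}}(x)] = \mathbb{E}[\mathcal{O}(x)]$, and the two oracles disagree only on $E\cup F$, which has probability at most $p+q \le \delta/K$.

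\emph{Step 3 (coupling the whole run).} I run $\mathcal{A}$ with both oracles on a common probability space, using the same $Z$ and the same auxiliary randomness at each round. As long as the outputs have agreed so far, both copies issue the same adaptive query $x_k$. The conditional probability of disagreement at round $k$, given the history, is then at most $\delta/K$. By induction and a union bound over $k=1,\dots,K$, the two transcripts are identical with probability at least $1-\delta$.

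The main obstacle is Step 2: making the compensating value simultaneously mean-preserving and bounded by $B$. The naive choice, placing all of the correction on $E$, fails because $\|m\|/p$ need not be bounded when $p$ is tiny but the tail mass sits far out. The extra thinned event $F$ is what fixes this, and I expect the constant-chasing that fits $\tau$, $q$ and $\|v\|$ under the single stated constant $B = 4\sigma\sqrt{\max\{\log\frac{4K}{\delta},1\}}$ to be the fiddly part.
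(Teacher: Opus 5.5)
The paper does not prove this lemma itself; it imports it verbatim from \cite{attia2024note}. Your argument is correct and is essentially the construction used there: you truncate the centered noise at $\tau = B/2$, restore the mean through an independent auxiliary event of probability $q = \|m\|/B$ on which the output is replaced by a bounded vector $v$, and then couple the two runs query by query with a union bound over the $K$ rounds. The constant-chasing you expect in Step 2 is not needed: taking $F$ to be a thinning of $E^c$ gives exactly $\|v\| = \|m\|\,(1-p-q)/((1-p)(p+q)) \le \|m\|/q = B$, and your Step 1 bounds already give $p+q \le 3\delta/(4K) \le \delta/K$.
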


\cref{lem:bdd-ltail} implies that for analyzing an algorithm with
light-tailed oracles, it suffices to analyze a simpler version of the
algorithm that uses the bounded oracles. The results derived from bounded
oracles equally apply to the original algorithm with only a small loss in
logarithmic factors in $K$.

As a direct application of \cref{lem:bdd-ltail}, we get the convergence of {\sosgm} under sub-Gaussian Assumptions.

\begin{proof}[Proof of \cref{thm:conv-subG}]
  By \cref{lem:bdd-ltail}, substituting $\sigma \gets 4 \sigma \sqrt{\max \left\{
  \log \tfrac{4 K}{\delta}, 1 \right\}}$ for both $\sigma=\sigma_0$ and $\sigma=\sigma_1$ into
  \cref{prop:conv-dtm} finishes the proof.
\end{proof}

\subsection{Proof of lemmas}

\subsubsection{Proof of \cref{lem:feedback-smoothness}}
\begin{proof}
  Notice that $f (x - P \nabla f_{\xi } (x))$ is convex in $P$ since it is the
  composition between affine function $x - P \nabla f_{\xi} (x)$ and convex
  function $f$. Therefore $\hat{r}_{x, \xi}$ is convex because it simply
  translates and scales $f (x - P \nabla f_{\xi } (x))$ by a positive factor
  $f (x) - f^{\star}$. Similarly, $\hat{h}_{x, \xi}$ is convex.
  
  Also, $f_{\zeta}  (x - P \nabla f_{\xi} (x))$ is convex in $P$ since it is
  the composition between affine function $x - P \nabla f_{\xi} (x)$ and
  convex function $f_{\zeta}$. Therefore $r_{x, \xi, \zeta}$ and $h_{x, \xi,
  \zeta}$ are convex.
  
  Then we consider the smoothness of the feedback functions. First we prove
  that $u_{x, \xi} (P) = f (x - P \nabla f_{\xi } (x))$ is $L \| \nabla
  f_{\xi} (x)\|^2 $-smooth. For any $P_1, P_2$,
  \begin{align*}
    \| \nabla u_{x, \xi} (P_1) - \nabla u_{x, \xi} (P_2) \|_F &{}= \| (\nabla
    f (x - P_1 \nabla f_{\xi } (x)) - \nabla f (x - P_2 \nabla f_{\xi } (x)))
    \nabla f_{\xi } (x) \|_F\\
    &{}= \| \nabla f (x - P_1 \nabla f_{\xi } (x)) - \nabla f (x - P_2
    \nabla f_{\xi } (x)) \|  \| \nabla f_{\xi } (x) \|\\
    &{}\leq L \| \nabla f_{\xi } (x) \|^2  \| P_1 - P_2 \|_F,
  \end{align*}
  Similarly, let $\hat{u}_{x, \xi, \zeta} (P) = f_{\zeta} (x - P \nabla f_{\xi
  } (x))$, $\hat{u}_{x, \xi, \zeta}$ is $L \| \nabla f_{\xi} (x)\|^2 $-smooth.
  
  where the inequality is because $f$ is $L$-smooth. Notice that
  \[ \hat{r}_{x, \xi} (P) = \tfrac{u_{x, \xi} (P) - f^{\star}}{f (x) -
     f^{\star}}, \hat{h}_{x, \xi, \zeta} (P) = \tfrac{\hat{u}_{x, \xi, \zeta} (P)
     - f (x)}{\| \nabla f (x) \|^2}, h_{x, \xi, \zeta} (P) =
     \tfrac{\hat{u}_{x, \xi, \zeta} (P) - f_{\zeta} (x)}{\| \nabla f_{\xi} (x)
     \|^2}, r_{x, \xi, \zeta} (P) = \tfrac{\hat{u}_{x, \xi, \zeta} (P) -
     f^{\star}}{f_{\xi} (x) - f^{\star}} \]
  Apply the inequality $\| \nabla f_{\xi} (x)\| \leq (1 + \sigma_1) \|
  \nabla f (x) \|$ from \cref{as:bound-g-dtm}. The smoothness of $\hat{r}_{x, \xi}$ is $\tfrac{L \|
  \nabla f_{\xi} (x)\|^2}{f (x) - f^{\star}} \leq (1 + \sigma_1)^2 \tfrac{L \|
  \nabla f (x)\|^2}{f (x) - f^{\star}} \leq 2 L^2 (1 + \sigma_1)^2$. The
  smoothness of $\hat{h}_{x, \xi}$ is $\tfrac{L \| \nabla f_{\xi} (x)\|^2}{\|
  \nabla f (x) \|^2} \leq L (1 + \sigma_1)^2$. The smoothness of $r_{x, \xi,
  \zeta}$ is $2 L^2$. The smoothness of $h_{x, \xi, \zeta}$ is $L$.
  
  For the Lipschitzness of $r_{x, \xi, \zeta}$ and $h_{x, \xi, \zeta}$,
  \[ \| \nabla h_{x, \xi, \zeta} (P) \|_F \leq \tfrac{\| \nabla f_{\zeta} (x -
     P \nabla f_{\xi} (x) \|}{\| \nabla f_{\xi} (x) \|} \leq
     \tfrac{\| \nabla f_{\zeta} (x - P \nabla f_{\xi} (x)) - \nabla f_{\zeta}
     (x) \|}{\| \nabla f_{\xi} (x) \|} + \tfrac{\| \nabla f_{\zeta} (x) \|}{\|
     \nabla f_{\xi} (x) \|} \leq L D + \tfrac{1 + \sigma_1}{1 - \sigma_1} \]
  \[ \| \nabla r_{x, \xi, \zeta} (P) \|_F \leq \tfrac{\| \nabla f_{\zeta} (x -
     P \nabla f_{\xi} (x) \|}{f_{\xi} (x) - f^{\star}} \leq
     \tfrac{2 L \| \nabla f_{\zeta} (x - P \nabla f_{\xi} (x)
     \|}{\| \nabla f_{\xi} (x) \|} \leq 2 L \left( L D + \tfrac{1 +
     \sigma_1}{1 - \sigma_1} \right) . \]
  Therefore $r_{x, \xi, \zeta}$ is $2 L \left( L D + \tfrac{1 + \sigma_1}{1 -
  \sigma_1} \right)$-Lipschitz and $h_{x, \xi, \zeta}$ is $\left( L D +
  \tfrac{1 + \sigma_1}{1 - \sigma_1} \right)$-Lipschitz.
\end{proof}

\subsubsection{Proof of \cref{lem:hindsight}}

Before proving \cref{lem:hindsight}, we introduce the Azuma's
concentration inequality for martingale difference sequence (MDS).

\begin{lemma}
  \label{lem:azuma}
  Let $Z_1, \ldots, Z_K$ be an MDS, if $| Z_k | \leq B$ for
  all $k$ almost surely, then for every $\delta \in (0, 1)$,
  \[ \mathbb{P} \left( \textstyle \sum_{k = 1}^K Z_k \geq B \sqrt{2 K \log \frac{1}{\delta}}
     \right) \leq \delta . \]
\end{lemma}

\begin{proof}[Proof of \cref{lem:hindsight}]
  \textbf{Part (i) Ratio feedback, strongly convex.} Suppose there exists stepsize $P$ and condition number
  $\kappa_{\star} \leq \kappa$ such that,
  \[ \tfrac{1}{\kappa_{\star}} P^{- 1} \preceq \nabla^2 f (x) \preceq P^{- 1},
     \qquad \forall x. \]
  Let $P^r_{\star} = c P$, where $c \leq 1$ is a constant to be chosen. Then
  the following properties hold,
  \begin{equation}
    \tfrac{c}{\kappa_{\star}} I \preceq P_{\star}^{1 / 2} \nabla^2 f (x)
    P_{\star}^{1 / 2} \preceq c I \quad \text{and} \quad
    \tfrac{c}{\kappa_{\star} \mu} I \preceq P_{\star} \preceq \tfrac{c}{L} I.
    \label{eq:P-star-bound}
  \end{equation}
  Consider the descent property,
  \begin{align}
    \begin{split}\label{eq:hindsight-1} 
      &{} f (x^k - P^r_{\star} \nabla f_{\xi^k} (x^k)) - f (x^k) \\
    =&{} - \langle \nabla f (x^k), P^r_{\star} \nabla f_{\xi^k} (x^k)
    \rangle + \tfrac{1}{2} \langle P^r_{\star} \nabla f_{\xi^k} (x^k),
    \nabla^2 f (\psi^k) P^r_{\star} \nabla f_{\xi^k} (x^k) \rangle \\
    \leq &{} - \| \nabla f (x^k) \|_{P^r_{\star}}^2 - \langle \nabla
    f_{\xi^k} (x^k) - \nabla f (x^k), P^r_{\star} \nabla f (x^k) \rangle +
    \tfrac{c}{2} \| \nabla f_{\xi^k} (x^k) \|_{P^r_{\star}}^2 \\
    = &{} - \left( 1 - \tfrac{c}{2} \right) \| \nabla f (x^k)
    \|_{P^r_{\star}}^2 + \tfrac{c}{2} \| \nabla f_{\xi^k} (x^k) - \nabla f
    (x^k) \|_{P^r_{\star}}^2 + (c - 1) \langle \nabla f_{\xi^k} (x^k) - \nabla
    f (x^k), P^r_{\star} \nabla f (x^k) \rangle \\
    \leq &{} - \left( 1 - \tfrac{c}{2} \right) \| \nabla f (x^k)
    \|_{P^r_{\star}}^2 + \tfrac{c^2 \sigma_1^2}{2 L} \| \nabla f_{\xi^k} (x^k)
    - \nabla f (x^k) \|_2^2 + (c - 1) \langle \nabla f_{\xi^k} (x^k) - \nabla
    f (x^k), P^r_{\star} \nabla f (x^k) \rangle \\
    \leq &{} - \left( 1 - \tfrac{c}{2} - \tfrac{c \kappa_{\star}
    \sigma_1^2}{2 \kappa} \right) \| \nabla f (x^k) \|_{P^r_{\star}}^2 +
    \tfrac{(c - 1) c}{L} \langle \nabla f_{\xi^k} (x^k) - \nabla f (x^k),
    \nabla f (x^k) \rangle 
    \end{split}
  \end{align}
  where the first equality is by Taylor expansion, the second inequality is by
  \eqref{eq:P-star-bound}, the third equality expands $\| \nabla f_{\xi^k}
  (x^k) \|_{P^r_{\star}}^2 = \| \nabla f (x^k) + (\nabla f_{\xi^k} (x^k) -
  \nabla f (x^k)) \|_{P^r_{\star}}^2$, the fourth inequality uses $P^r_{\star}
  \preceq \tfrac{c}{L} I$ in \eqref{eq:P-star-bound}, and the
  last inequality is by \eqref{eq:P-star-bound}.
  
  Since $\frac{c}{\kappa_{\star}} {I \preceq P^r_{\star}}^{1 / 2} \nabla^2 f
  (x) {P^r_{\star}}^{1 / 2}$,
  \[ f (x) - f^{\star} \leq \frac{\kappa_{\star}}{2 c} \| \nabla f (x)
     \|_{P_{\star}}^2 . \]
  Then, substitute this inequality back into \eqref{eq:hindsight-1},
  \begin{align*}
    f (x^k - P^r_{\star} \nabla f_{\xi^k} (x^k)) - f (x^k) &{}\leq - \left(
    \tfrac{2 c}{\kappa_{\star}} - \tfrac{c^2}{\kappa_{\star}} - \tfrac{c^2
    \sigma_1^2}{\kappa} \right) (f (x^k) - f^{\star})\\
    &{}\quad + \tfrac{(c - 1) c}{L} \langle \nabla f_{\xi^k} (x^k) - \nabla f
    (x^k), \nabla f (x^k) \rangle
  \end{align*}
  Choose $c = \frac{1}{1 + (\kappa_{\star} / \kappa) \sigma_1^2}$, and
  rearrange,
  \begin{equation}
    \hat{r}_{x^k, \xi^k} (P^r_{\star}) \leq 1 - \tfrac{1}{\kappa_{\star}
    \left( 1 + \frac{\kappa_{\star}}{\kappa} \sigma_1^2 \right)} -
    \tfrac{\frac{\kappa_{\star}}{\kappa} \sigma_1^2}{\left[ 1 +
    \frac{\kappa_{\star}}{\kappa} \sigma_1^2 \right]^2 L} \tfrac{\langle
    \nabla f_{\xi^k} (x^k) - \nabla f (x^k), \nabla f (x^k) \rangle}{f (x^k) -
    f (x^{\star})} . \label{eq:hindsight-ratio-telescope}
  \end{equation}
  Define
  \[ Z_k = \tfrac{\langle \nabla f_{\xi^k} (x^k) - \nabla f (x^k), \nabla f
     (x^k) \rangle}{f (x^k) - f (x^{\star})}, \]
  This is an MDS, and $| Z_k | \leq \tfrac{\sigma_1 \| \nabla f (x^k) \|^2}{f
  (x^k) - f (x^{\star})} \leq 2 L \sigma_1$ for all $k$. By
  \cref{lem:azuma}, w.p. at least $1 - \frac{\delta}{2}$,
  \begin{equation}
    \textstyle \sum_{k = 1}^K Z_k \leq 2 L \sigma_1 \sqrt{2 K \log \tfrac{2}{\delta}} .
    \label{eq:hindsight-ratio-concentration}
  \end{equation}
  Telescope \eqref{eq:hindsight-ratio-telescope} and use concentration
  inequality \eqref{eq:hindsight-ratio-concentration} and gets
  \eqref{eq:ratio-hindsight}.
  
  \textbf{Part (ii) Hypergradient feedback, convex.} Consider the descent property with scalar stepsize
  $P_{\star}^h = \alpha I$ where $\alpha$ is to be chosen,
  \begin{align*}
    &{} f_{\zeta^k} (x^k - \alpha \nabla f_{\xi^k} (x^k)) - f_{\zeta^k}
    (x^k) \\
    \leq &{} - \alpha \langle \nabla f_{\xi^k} (x^k), \nabla f_{\zeta^k}
    (x^k) \rangle + \tfrac{\alpha^2 L}{2} \| \nabla f_{\xi^k} (x^k) \|^2\\
    = &{} - \alpha \langle \nabla f_{\xi^k} (x^k), \nabla f (x^k) \rangle -
    \alpha \langle \nabla f_{\xi^k} (x^k), \nabla f_{\zeta^k} (x^k) - \nabla f
    (x^k) \rangle + \tfrac{\alpha^2 L}{2} \| \nabla f_{\xi^k} (x^k) \|^2\\
    = &{} (- \alpha + \alpha^2 L) \langle \nabla f_{\xi^k} (x^k), \nabla f
    (x^k) \rangle + \tfrac{\alpha^2 L}{2} \| \nabla f_{\xi^k} (x^k) - \nabla f
    (x^k) \|^2 - \tfrac{\alpha^2 L}{2} \| \nabla f (x^k) \|^2\\
    &{}\quad - \alpha \langle \nabla f_{\xi^k} (x^k), \nabla f_{\zeta^k} (x^k) -
    \nabla f (x^k) \rangle\\
    =&{} \left( \tfrac{\alpha^2 L \sigma_1^2}{2} + \tfrac{\alpha^2 L}{2} -
    \alpha \right) \| \nabla f (x^k) \|^2 + (- \alpha + \alpha^2 L) \langle
    \nabla f_{\xi^k} (x^k) - \nabla f (x^k), \nabla f (x^k) \rangle\\
    &{}\quad - \alpha \langle \nabla f_{\xi^k} (x^k), \nabla f_{\zeta^k} (x^k) -\nabla f (x^k) \rangle
  \end{align*}
  where the first equality is due to,
  \begin{align*}
    \| \nabla f_{\xi^k} (x^k) \|^2 &{}= \| \nabla f_{\xi^k} (x^k) - \nabla f
    (x^k) \|^2 - \| \nabla f (x^k) \|^2 + 2 \langle \nabla f_{\xi^k} (x^k),
    \nabla f (x^k) \rangle .
  \end{align*}
  Let $\alpha = \frac{1}{L (1 + \sigma_1^2) }$, and rearrange,
  \begin{equation}
    \hat{h}_{x^k, \xi^k} (P^h_{\star}) \leq - \tfrac{1}{2 L (1 + \sigma_1^2)}
    - \tfrac{\sigma_1^2}{L (1 + \sigma_1^2)^2} \tfrac{\langle \nabla f_{\xi^k}
    (x^k) - \nabla f (x^k), \nabla f (x^k) \rangle}{\| \nabla f (x^k) \|^2} -
    \tfrac{\langle \nabla f_{\xi^k} (x^k), \nabla f_{\zeta^k} (x^k) - \nabla f
    (x^k) \rangle}{L (1 + \sigma_1^2) \| \nabla f (x^k) \|^2}
    \label{eq:hindsight-hyper-telescope}
  \end{equation}
  Let $Z^{(1)}_k = \frac{\langle \nabla f (x^k, \xi^k) - \nabla f (x^k),
  \nabla f (x^k) \rangle}{\| \nabla f (x^k) \|^2}$. This is an MDS, and $|
  Z^{(1)}_k | \leq \sigma_1$ for all $k$. By \cref{lem:azuma}, w.p. at
  least $1 - \frac{\delta}{2}$,
  \begin{equation}
    \textstyle \sum_{k = 1}^K Z^{(1)}_k \leq \sigma_1 \sqrt{2 K \log \tfrac{2}{\delta}} .
    \label{eq:hindsight-hyper-concentration}
  \end{equation}
  Let $Z^{(2)}_k = \frac{\langle \nabla f_{\xi^k} (x^k), \nabla f_{\zeta^k}
  (x^k) - \nabla f (x^k) \rangle}{\| \nabla f (x^k) \|^2}$. This is an MDS,
  and $| Z^{(2)}_k | \leq \sigma_1 (1 + \sigma_1)$ for all $k$. By
  \cref{lem:azuma}, w.p. at least $1 - \frac{\delta}{2}$,
\begin{equation}
    \textstyle \sum_{k = 1}^K Z^{(2)}_k \leq \sigma_1 (1 + \sigma_1) \sqrt{2 K \log
    \tfrac{2}{\delta}} . \label{eq:hindsight-hyper-concentration2}
  \end{equation}
  Telescope \eqref{eq:hindsight-hyper-telescope} and use concentration
  inequality \eqref{eq:hindsight-hyper-concentration},
  \eqref{eq:hindsight-hyper-concentration2} gets \eqref{eq:hyper-hindsight}.
\end{proof}

\subsubsection{Proof of \cref{lem:regret-exact-func}}
\begin{proof}
  \textbf{Part (i) Regret of ratio feedback.} By the convexity of $\hat{r}_{x^k, \xi^k}$, we have
  \begin{align}
    \hat{r}_{x^k, \xi^k} (P_k) - \hat{r}_{x^k, \xi^k} (P_{\star}) &{}\leq
    \langle \nabla \hat{r}_{x^k, \xi^k} (P_k), P_k - P_{\star} \rangle
    \nonumber\\
    &{}= \langle \nabla r_{x^k, \xi^k, \zeta^k} (P_k), P_k - P_{\star}
    \rangle + \langle \nabla \hat{r}_{x^k, \xi^k} (P_k) - \nabla r_{x^k,
    \xi^k, \zeta^k} (P_k), P_k - P_{\star} \rangle \nonumber\\
    &{}= \tfrac{1}{2 \eta} \| P_k - P_{\star} \|_F^2 - \tfrac{1}{2 \eta} \|
    P_{k + 1} - P_{\star} \|_F^2 + \tfrac{\eta}{2} \| \nabla r_{x^k, \xi^k,
    \zeta^k} (P_k) \|_F^2 \nonumber\\
    &{}\quad + \langle \nabla \hat{r}_{x^k, \xi^k} (P_k) - \nabla r_{x^k, \xi^k,
    \zeta^k} (P_k), P_k - P_{\star} \rangle \label{eq:regret-ratio-tel} 
  \end{align}
  Consider the last term,
  \begin{align*}
    \nabla \hat{r}_{x^k, \xi^k} (P_k) - \nabla r_{x^k, \xi^k, \zeta^k} (P_k) &{}= \tfrac{[\nabla f (x^k - P \nabla f_{\xi^k} (x^k)) - \nabla f_{\zeta^k}
    (x^k - P \nabla f_{\xi^k} (x^k))] \nabla f_{\xi^k} (x^k)^{\top}}{f_{\xi^k}
    (x^k) - f^{\star}}\\
    &{}\quad + \tfrac{\nabla f (x^k - P \nabla f_{\xi^k} (x^k)) \nabla f_{\xi^k}
    (x^k)^{\top}}{f_{\xi^k} (x^k) - f^{\star}} \left( \tfrac{f_{\xi^k} (x^k) -
    f^{\star}}{f (x^k) - f^{\star}} - 1 \right)
  \end{align*}
  Let $Z^{(1)}_k = \left\langle \tfrac{[\nabla f (x^k - P \nabla f_{\xi^k}
  (x^k)) - \nabla f_{\zeta^k} (x^k - P \nabla f_{\xi^k} (x^k))] \nabla
  f_{\xi^k} (x^k)^{\top}}{f_{\xi^k} (x^k) - f^{\star}}, P_k - P_{\star}
  \right\rangle$, $\{ Z^{(1)}_k \}$ is an MDS, and has a uniform bound,
  \begin{align*}
    | Z^{(1)}_k | &{}\leq \tfrac{\sigma_1 \| \nabla f (x^k - P \nabla
    f_{\xi^k} (x^k)) \| \| \nabla f_{\xi^k} (x^k) \|}{f_{\xi^k} (x^k) -
    f^{\star}} \| P_k - P_{\star} \|_F \leq 2 \sigma_1 L D \tfrac{\| \nabla f (x^k - P \nabla f_{\xi^k}
    (x^k)) \|}{\| \nabla f_{\xi^k} (x^k) \|} \leq 2 \sigma_1 L D \left( L D +
    \tfrac{1}{1 - \sigma_1} \right) .
  \end{align*}
  By Azuma's inequality (\cref{lem:azuma}), w.p. $1 - \tfrac{\delta}{2}$,
  \begin{align*}
    \textstyle \sum_{k = 1}^K Z^{(1)}_k &{}\leq 2 \sigma_1 L D \left( L D + \tfrac{1}{1
    - \sigma_1} \right) \sqrt{2 K \log \tfrac{2}{\delta}} .
  \end{align*}
  Let $Z^{(2)}_k = \left\langle \tfrac{\nabla f (x^k - P \nabla f_{\xi^k}
  (x^k)) \nabla f_{\xi^k} (x^k)^{\top}}{f_{\xi^k} (x^k) - f^{\star}} \left(
  \tfrac{f_{\xi^k} (x^k) - f^{\star}}{f (x^k) - f^{\star}} - 1 \right), P_k -
  P_{\star} \right\rangle$,
  \begin{align*}
    | Z^{(2)}_k | &{}\leq \tfrac{\| \nabla f (x^k - P \nabla f_{\xi^k} (x^k))
    \| \| \nabla f_{\xi^k} (x^k) \|}{f_{\xi^k} (x^k) - f^{\star}} \left|
    \tfrac{f_{\xi^k} (x^k) - f^{\star}}{f (x^k) - f^{\star}} - 1 \right| \|
    P_k - P_{\star} \|_F\\
    &{}\leq 2 \sigma_0 L D \tfrac{\| \nabla f (x^k - P \nabla f_{\xi^k}
    (x^k)) \|}{\| \nabla f_{\xi^k} (x^k) \|} \leq 2 \sigma_0 L D \left( L D +
    \tfrac{1}{1 - \sigma_1} \right) .
  \end{align*}
  Therefore, $\textstyle \sum_{k = 1}^K Z^{(2)}_k \leq 2 \sigma_0 L D \left( L D +
  \tfrac{1}{1 - \sigma_1} \right) K.$ Telescope \eqref{eq:regret-ratio-tel},
  w.p. $1 - \frac{\delta}{2}$,
  \begin{align*}
    \textstyle \sum_{k = 1}^K (\hat{r}_{x^k, \xi^k} (P_k) - \hat{r}_{x^k, \xi^k}
    (P_{\star})) &{}\leq \frac{1}{2 \eta} \| P_1 - P_{\star} \|_F^2 + 2 \eta
    L^2 \left( L D + \tfrac{1 + \sigma_1}{1 - \sigma_1} \right)^2 K + 2
    \sigma_0 L D \left( L D + \tfrac{1}{1 - \sigma_1} \right) K\\
    &{}\quad + 2 \sigma_1 L D \left( L D + \tfrac{1}{1 - \sigma_1} \right)
    \sqrt{2 K \log \tfrac{2}{\delta}} .
  \end{align*}
  Let $\eta = \frac{\| P_1 - P_{\star} \|_F}{2 L \left( \frac{1 + \sigma_1}{1
  - \sigma_1} + L D \right) \sqrt{K}}$, we derived the final regret bound \eqref{eq:regret-ratio}.
  
  \textbf{Part (ii) Regret of hypergradient feedback.} By the convexity of $\hat{h}_{x^k, \xi^k}$, we have
  \begin{align}
    \hat{h}_{x^k, \xi^k, \zeta^k} (P_k) - \hat{h}_{x^k, \xi^k, \zeta^k}
    (P_{\star}) &{}\leq \langle \nabla \hat{h}_{x^k, \xi^k} (P_k), P_k - P_{\star}
    \rangle \nonumber\\
    &{}= \langle \nabla h_{x^k, \xi^k, \zeta^k} (P_k), P_k - P_{\star}
    \rangle + \langle \nabla \hat{h}_{x^k, \xi^k, \zeta^k} (P_k) - \nabla
    \hat{h}_{x^k, \xi^k, \zeta^k} (P_k), P_k - P_{\star} \rangle \nonumber\\
    &{}\leq \tfrac{1}{2 \eta} \| P_k - P_{\star} \|_F^2 - \tfrac{1}{2 \eta}
    \| P_{k + 1} - P_{\star} \|_F^2 + \tfrac{\eta}{2} \| \nabla h_{x^k, \xi^k,
    \zeta^k} (P_k) \|_F^2 \nonumber\\
    &{}\quad + \langle \hat{h}_{x^k, \xi^k, \zeta^k} (P_k) - \nabla h_{x^k,
    \xi^k, \zeta^k} (P_k), P_k - P_{\star} \rangle \label{eq:regret-hyper-tel}
  \end{align}

  For the last term,
  \begin{align*}
    \nabla \hat{h}_{x^k, \xi^k, \zeta^k} (P_k) - \nabla h_{x^k, \xi^k,
    \zeta^k} (P_k) &{}= \tfrac{\nabla f_{\zeta^k} (x^k - P \nabla f_{\xi^k}
    (x^k)) \nabla f_{\xi^k} (x^k)^{\top}}{\| \nabla f_{\xi^k} (x^k) \|^2}
    \left( 1 - \tfrac{\| \nabla f_{\xi^k} (x^k) \|^2}{\| \nabla f (x^k) \|^2}
    \right)
  \end{align*}
  Let $Z^{(2)}_k = \left( 1 - \tfrac{\| \nabla f_{\xi^k} (x^k) \|^2}{\| \nabla
  f (x^k) \|^2} \right) \left\langle \tfrac{\nabla f_{\zeta^k} (x^k - P \nabla
  f_{\xi^k} (x^k)) \nabla f_{\xi^k} (x^k)^{\top}}{\| \nabla f_{\xi^k} (x^k)
  \|^2}, P_k - P_{\star} \right\rangle$, and
  \[ | Z^{(2)}_k | \leq (\sigma_1^2 + 2 \sigma_1) D \left( L D + \tfrac{1}{1 -
     \sigma_1} \right) \leq 3 \sigma_1 D \left( L D + \tfrac{1 + \sigma_1}{1 -
     \sigma_1} \right) . \]
  Therefore, $\textstyle \sum_{k = 1}^K Z^{(2)}_k \leq 3 \sigma_1 D \left( L D + \tfrac{1
  + \sigma_1}{1 - \sigma_1} \right) K.$ Telescope \eqref{eq:regret-hyper-tel},
  \begin{align*}
    \textstyle \sum_{k = 1}^K (\hat{h}_{x^k, \xi^k} (P_k) - \hat{h}_{x^k, \xi^k}
    (P_{\star})) &{}\leq \tfrac{1}{2 \eta} \| P_1 - P_{\star} \|_F^2 +
    \tfrac{\eta}{2} \left( L D + \tfrac{1 + \sigma_1}{1 - \sigma_1} \right)^2
    K + 3 \sigma_1 D \left( L D + \tfrac{1 + \sigma_1}{1 - \sigma_1} \right)
    K.
  \end{align*}
  Let $\eta = \frac{\| P_1 - P_{\star} \|_F}{\left( L D + \frac{1 +
  \sigma_1}{1 - \sigma_1} \right) \sqrt{K}}$, we derived the final regret
  bound \eqref{eq:regret-hyper}.
\end{proof}

\subsubsection{Proof of \cref{prop:conv-dtm}}
\begin{proof}[Proof of \cref{prop:conv-dtm}]
  \textbf{Part (i) Ratio feedback, strongly convex.} By the definiton of ratio feedback,
  \begin{align*}
    \tfrac{f (x^{K}) - f (x^{\star})}{f (x^0) - f (x^{\star})} &{}=
    \textstyle \prod_{k = 0}^{K - 1} \hat{r}_{x^k, \xi^k} (P_k) \leq \left(
    \tfrac{1}{K} \textstyle \sum_{k = 0}^{K - 1} \hat{r}_{x^k, \xi^k} (P_k)
    \right)^K
  \end{align*}
  Apply \cref{lem:hindsight} (i), \cref{lem:regret-exact-func} (i) and by union bound, w.p. $1 - \delta$,
  \begin{align*}
    \tfrac{f (x^{K}) - f (x^{\star})}{f (x^0) - f (x^{\star})} &{}\leq
    \left( 1 - \tfrac{1}{\kappa_{\star} \left( 1 +
    \frac{\kappa_{\star}}{\kappa} \sigma_1^2 \right)} + 2 \sigma_0 L D \left(
    L D + \tfrac{1}{1 - \sigma_1} \right) + \tfrac{C}{\sqrt{K}} \right)^K
  \end{align*}
  where $C = 2 L D \left( L D + \tfrac{1 + \sigma_1}{1 - \sigma_1} \right) + 2
  \sigma_1 L D \left( L D + \tfrac{1}{1 - \sigma_1} \right) \sqrt{2 \log
  \tfrac{2}{\delta}}$. To ensure linear convergence, it suffices to impose $2
  \sigma_0 L D \left( L D + \tfrac{1}{1 - \sigma_1} \right) \leq \tfrac{1}{2
  (1 + \sigma_1^2) \kappa_{\star}}$. Let $\sigma_1 \leq \tfrac{1}{2}$, the
  condition reduced to $\sigma_0 \leq \tfrac{1}{5 L D (L D + 2)
  \kappa_{\star}} = \mathcal{O} \left( \tfrac{1}{L^2 D^2 \kappa_{\star}}
  \right)$. Thus completes the proof of (i).
  
  \textbf{Part (ii) Hypergradient feedback, strongly convex.} By null step and the definition of $\hat{h}_{x^k,
  \xi^k, \zeta^k} (P_k)$,
  \begin{align*}
    f_{\zeta^k} (x^{k + 1}) - f_{\zeta^k} (x^k) &{}= \min \{ \hat{h}_{x^k,
    \xi^k, \zeta^k} (P_k), 0 \} \| \nabla f (x^k) \|^2\\
    &{}\leq 2 \mu \min \{ \hat{h}_{x^k, \xi^k, \zeta^k} (P), 0 \} (f (x^k) -
    f^{\star})
  \end{align*}
  Thus we have,
  \begin{align}
    f (x^{k + 1}) - f (x^k) &{}\leq 2 \mu \min \{ \hat{h}_{x^k, \xi^k,
    \zeta^k} (P_k), 0 \} (f (x^k) - f^{\star}) + f (x^{k + 1}) - f_{\zeta^k}
    (x^{k + 1}) + f_{\zeta^k} (x^k) - f (x^k) \nonumber\\
    &{}\leq 2 \mu \min \{ \hat{h}_{x^k, \xi^k, \zeta^k} (P_k), 0 \} (f (x^k)
    - f^{\star}) + \sigma_0 [(f (x^{k + 1}) - f^{\star}) + (f (x^k) -
    f^{\star})] \label{eq:conv-hyper-sc-single} \\
    &{}\leq \left( 2 \mu \min \{ \hat{h}_{x^k, \xi^k, \zeta^k} (P_k), 0 \} +
    \tfrac{2 \sigma_0}{1 - \sigma_0} \right) (f (x^k) - f^{\star}) \nonumber
  \end{align}
  The last inequality is because,
  \begin{align*}
    f (x^{k + 1}) - f (x^{\star}) &{}\leq [1 + 2 \mu \min \{ \hat{h}_{x^k,
    \xi^k, \zeta^k} (P_k), 0 \}] (f (x^k) - f^{\star}) + \sigma_0 [f (x^{k +
    1}) - f^{\star} + f (x^k) - f^{\star}]\\
    &{}\leq (f (x^k) - f^{\star}) + \sigma_0 [f (x^{k + 1}) - f^{\star} + f
    (x^k) - f^{\star}],
  \end{align*}
  and when $\sigma_0 < 1$, $\frac{f (x^{k + 1}) - f (x^{\star})}{f (x^k) - f
  (x^{\star})} \leq \frac{1 + \sigma_0}{1 - \sigma_0}$. Combine with
  \eqref{eq:conv-hyper-sc-single} and rearrange,
  \[ \tfrac{f (x^{k + 1}) - f^{\star}}{f (x^k) - f^{\star}} \leq 1 + 2 \mu
     \min \{ \hat{h}_{x^k, \xi^k, \zeta^k} (P_k), 0 \} + \tfrac{2 \sigma_0}{1
     - \sigma_0} . \]
  Telescope and by arithemetic mean inequality,
  \begin{align*}
    \tfrac{f (x^{K}) - f^{\star}}{f (x^0) - f^{\star}} &{}\leq \textstyle
    \prod_{k = 0}^{K - 1} \left( 1 + 2 \mu \min \{ \hat{h}_{x^k, \xi^k,
    \zeta^k} (P_k), 0 \} +
    \tfrac{2 \sigma_0}{1 - \sigma_0} \right)\\
    &{}\leq \left( \tfrac{1}{K} \textstyle \sum_{k = 0}^{K - 1} \left( 1 + 2
    \mu \min \{ \hat{h}_{x^k, \xi^k, \zeta^k} (P_k), 0 \} + \tfrac{2
    \sigma_0}{1 - \sigma_0} \right) \right)^K
  \end{align*}
  By \cref{lem:hindsight} (ii) and \cref{lem:regret-exact-func} (ii), w.p. $1 - \delta$,
  \begin{align*}
    \tfrac{f (x^{K}) - f^{\star}}{f (x^0) - f^{\star}} &{}\leq \left( 1 +
    2 \mu \min \left\{ - \tfrac{1}{2 L (1 + \sigma_1^2)} + 3 \sigma_1 D \left(
    L D + \tfrac{1}{1 - \sigma_1} \right), 0 \right\} + \tfrac{2 \sigma_0}{1 -
    \sigma_0} + \tfrac{C}{\sqrt{K}} \right)^K
  \end{align*}
  where $C = \left[ \tfrac{\sigma_1^3}{L (1 + \sigma_1^2)^2} + \tfrac{\sigma_1
  (1 + \sigma_1)}{L (1 + \sigma_1^2)} \right] \sqrt{2 \log \tfrac{1}{\delta}}
  + D \left( L D + \tfrac{1 + \sigma_1}{1 - \sigma_1} \right)$. To ensure
  linear convergence, it suffices to impose that,
  \[ 3 \sigma_1 D \left( L D + \tfrac{1}{1 - \sigma_1} \right) \leq
     \tfrac{1}{8 L (1 + \sigma_1^2)}, \quad \tfrac{2 \sigma_0}{1 - \sigma_0}
     \leq \tfrac{1}{8 \kappa (1 + \sigma_1^2)} . \]
  With $\sigma_1 \leq \tfrac{1}{2}$, it suffices to let $\sigma_0 \leq
  \tfrac{1}{20 \kappa + 1}, \sigma_1 \leq \tfrac{1}{30 L D (L D + 2)}$. Thus
  completes the proof of part (ii).
  
  \textbf{Part (iii) Hypergradient feedback, convex.} Since $\sigma_0 = 0$, by the definition of
  $\hat{h}_{x^k, \xi^k, \zeta^k} (P_k)$,
  \begin{align*}
    f (x^{k + 1}) - f (x^k) &{}= \min \{ \hat{h}_{x^k, \xi^k, \zeta^k} (P_k),
    0 \} \| \nabla f (x^k) \|^2\\
    &{}\leq \min \{ \hat{h}_{x^k, \xi^k, \zeta^k} (P_k), 0 \} \tfrac{[f
    (x^k) - f^{\star}]^2}{\| x^k - x^{\star} \|^2}\\
    &{}\leq \min \{ \hat{h}_{x^k, \xi^k, \zeta^k} (P_k), 0 \} \tfrac{[f
    (x^k) - f^{\star}]^2}{\Delta^2}
  \end{align*}
  where $\Delta \assign \max_{x \in \{ x : f (x) \leq f (x^0) \}}
  \min_{x^{\star} \in \mathcal{X}^{\star}} \| x - x^{\star} \|$. Divide by $[f
  (x^{k + 1}) - f^{\star}] [f (x^k) - f^{\star}]$ on both sides and by null
  step that $f (x^{k + 1}) \leq f (x^k)$,
  \[ \tfrac{1}{f (x^k) - f^{\star}} - \tfrac{1}{f (x^{k + 1}) - f^{\star}}
     \leq \tfrac{\min \{ \hat{h}_{x^k, \xi^k, \zeta^k} (P_k), 0 \}}{\Delta^2}
     . \]
  Telescope and rearrange,
  \[ f (x^{K}) - f^{\star} \leq \min \left\{ \tfrac{\Delta^2}{\max \left\{
     - \textstyle \sum_{k = 0}^{K - 1} \hat{h}_{x^k, \xi^k, \zeta^k} (P_k), 0
     \right\}}, f (x^0) - f^{\star} \right\} \]
  By \cref{lem:hindsight} (ii) and \cref{lem:regret-exact-func} (ii), w.p. $1 -
  \delta$,
  \[ f (x^{K}) - f^{\star} \leq \min \left\{ \tfrac{\Delta^2}{\max \left\{
     K \left( \frac{1}{2 L (1 + \sigma_1^2)} - 3 \sigma_1 D \left( L D +
     \tfrac{1 + \sigma_1}{1 - \sigma_1} \right) - \tfrac{C}{\sqrt{K}} \right),
     0 \right\}}, f (x^0) - f^{\star} \right\} \]
  where $C = \left[ \tfrac{\sigma_1^3}{L (1 + \sigma_1^2)^2} + \tfrac{\sigma_1
  (1 + \sigma_1)}{L (1 + \sigma_1^2)} \right] \sqrt{2 \log \tfrac{1}{\delta}}
  + D \left( L D + \tfrac{1 + \sigma_1}{1 - \sigma_1} \right)$. To get
  convergence, it suffices to impose
  \[
    3 \sigma_1 D \left( L D + \tfrac{1 + \sigma_1}{1 - \sigma_1} \right)
    \leq \frac{1}{4 L (1 + \sigma_1^2)}, \qquad
    \sigma_1 \leq \frac{1}{15 L D (L D + 3)}
    = \mathcal{O} \left( \tfrac{1}{L^2 D^2} \right).
  \]
  thus proved part (iii).
\end{proof}

\section{Proofs of results in \cref{sec:vr}}

\subsection{Proof of \cref{lem:vr-pot}}
\begin{proof}
  By the definition of {\svrg} gradient estimator $g_t$, and $L$-smoothness,
  \begin{align}
    \mathbb{E}_t [\| g_t \|^2] &{}= \mathbb{E}_t [\| \nabla f_{\xi^t} (x^t) -
    \nabla f_{\xi^t} (\tilde{x}^k) + \nabla f (\tilde{x}^k) \|^2] \nonumber\\
    &{}\leq 2\mathbb{E}_t [\| \nabla f_{\xi^t} (x^t) - \nabla f_{\xi^t}
    (x^{\star}) \|^2] + 2\mathbb{E}_t [\| \nabla f_{\xi^t} (\tilde{x}^k) -
    \nabla f_{\xi^t} (x^{\star}) - \nabla f (\tilde{x}^k) \|^2] \nonumber\\
    &{}\leq 2\mathbb{E}_t [\| \nabla f_{\xi^t} (x^t) - \nabla f_{\xi^t}
    (x^{\star}) \|^2] + 2\mathbb{E}_t [\| \nabla f_{\xi^t} (\tilde{x}^k) -
    \nabla f_{\xi^t} (x^{\star}) \|^2] \nonumber\\
    &{}\leq 4 L (f (x^t) - f^{\star} + f (\tilde{x}^k) - f^{\star}) 
    \label{eq:svrg-var}
  \end{align}
  \textbf{Part (i) No momentum.} The update rule is $x^{t + 1} = x^t - c P_k
  g_t$. By $\underline{\alpha} I \preceq P_k \preceq \bar{\alpha} I$ and
  \eqref{eq:svrg-var},
  \begin{align*}
    \mathbb{E}_t [f (x^{t + 1})] &{}\leq f (x^t) - c \nabla f (x^t)^{\top}
    P_k \nabla f (x^t) + \tfrac{c^2 L}{2} \mathbb{E} [\| P_k g_t \|^2]\\
    &{}\leq f (x^t) - c \, \underline{\alpha}  \| \nabla f (x^t) \|^2 +
    \tfrac{c^2 \bar{\alpha}^2 L}{2} \mathbb{E} [\| g_t \|^2]\\
    &{}= f (x^t) - \left( 2 c \, \underline{\alpha} \,  \mu - 2 c^2 \bar{\alpha}^2
    L^2 \right) (f (x^t) - f^{\star}) + 2 c^2 \bar{\alpha}^2 L^2 (f
    (\tilde{x}^k) - f^{\star}) .
  \end{align*}
  Let $V^t \assign f (x^t) - f^{\star}$ proves \eqref{pot-svrg}.

  \textbf{Part (ii) Bounded momentum.} Define $d^t \assign x^t - x^{t - 1}$.
  Since $f$ is $L$-smooth,
  \begin{align}
    \mathbb{E}_t [f (x^{t + 1})] &{}\leq f (x^t) + \langle \nabla f (x^t),
    x^{t + 1} - x^t \rangle + \tfrac{L}{2} \mathbb{E}_t [\| x^{t + 1} - x^t \|^2] \nonumber\\
    &{}= f (x^t) + \langle \nabla f (x^t), - c \, \alpha \nabla f (x^t) +
    \beta_k d^t \rangle + \tfrac{L}{2} \mathbb{E}_t [\| d^{t + 1} \|^2] . 
    \label{eq:pot2-f}
  \end{align}
  By the update rule, $d^{t + 1} = - c \, \alpha g_t + \beta_k d^t$. Take
  conditional expectation and by \eqref{eq:svrg-var},
  \begin{align}
    \mathbb{E}_t [\| d^{t + 1} \|^2] &{}= c^2 \alpha^2 \mathbb{E}_t [\| g_t
    \|^2] + \beta_k^2 \| d^t \|^2 - 2 c \, \alpha \beta_k \langle \nabla f (x^t),
    d^t \rangle \nonumber\\
    &{}\leq 4 c^2 \alpha^2 L (f (x^t) - f^{\star}) + \beta_k^2 \| d^t \|^2 -
    2 c \, \alpha \beta_k \langle \nabla f (x^t), d^t \rangle + 4 c^2 \alpha^2 L
    (f (\tilde{x}^k) - f^{\star}) .  \label{eq:pot2-d}
  \end{align}
  Let $V^t = f (x^t) - f^{\star} + \tfrac{L}{2} \| d^t \|^2$, by
  \eqref{eq:pot2-f} and \eqref{eq:pot2-d},
  \begin{align*}
    \mathbb{E}_t [V^{t + 1}] &{}\leq f (x^t) - f^{\star} + \langle \nabla f
    (x^t), - c \, \alpha \nabla f (x^t) + \beta_k d^t \rangle + L\mathbb{E}_t [\|
    d^{t + 1} \|^2]\\
    &{}\leq f (x^t) - f^{\star} - c \, \alpha \| \nabla f (x^t) \|^2 + (1 - 2 c \,
    \alpha L) \beta_k \langle \nabla f (x^t), d^t \rangle + L \beta_k^2 \| d^t
    \|^2\\
    &{}\quad + 4 c^2 \alpha^2 L^2 (f (x^t) - f^{\star}) + 4 c^2 \alpha^2 L^2 (f
    (\tilde{x}^k) - f^{\star})\\
    &{}= V^t - \left( \tfrac{1}{2} - \beta_k^2 \right) L \| d^t \|^2 - c \,
    \alpha \| \nabla f (x^t) \|^2 + 4 c^2 \alpha^2 L^2 (f (\tilde{x}^k) -
    f^{\star})\\
    &{}\quad + (1 - 2 c \, \alpha L) \beta_k \langle \nabla f (x^t), d^t \rangle + 4
    c^2 \alpha^2 L^2 (f (x^t) - f^{\star})\\
    &{}\leq V^t - \left( \tfrac{1}{2} - \beta_k^2 \right) L \| d^t \|^2 - (c \,
    \alpha - 2 c^2 \alpha^2 L \kappa) \| \nabla f (x^t) \|^2\\
    &{}\quad + (1 - 2 c \, \alpha L) \beta_k \langle \nabla f (x^t), d^t \rangle + 4
    c^2 \alpha^2 L^2 (f (\tilde{x}^k) - f^{\star}).
  \end{align*}
  Thus completes the proof of \eqref{pot-hbm}.
\end{proof}

\subsection{Proof of \cref{prop:svrg-conv}}
\begin{proof}
  \textbf{Part (i) {\svrg}.} Let $c = 1$ and
  $\alpha = \bar{\alpha} = \underline{\alpha}$ in \eqref{pot-svrg} and telescope,
  \begin{align*}
    \mathbb{E} [f (x^m)] &{}\leq f (x^0) + 2 m \alpha^2 L^2 (f (\tilde{x}^k)
    - f^{\star}) - 2 (\alpha \mu - \alpha^2 L^2) \left[ \sum_{t = 0}^{m - 1}
    (\mathbb{E} [f (x^t)] - f (x^{\star})) \right]\\
    &{}= f (\tilde{x}^k) + 2 m \alpha^2 L^2 (f (\tilde{x}^k) - f^{\star}) -
    2 m (\alpha \mu - \alpha^2 L^2) \mathbb{E} [f (\tilde{x}^{k + 1}) -
    f^{\star}] .
  \end{align*}
  Since $\alpha < \tfrac{1}{\kappa L}$, then $\alpha \mu - \alpha^2 L^2 > 0$.
  Rearrange and the contraction ratio is,
  \begin{equation}
    \tfrac{\mathbb{E} [f (\tilde{x}^{k + 1}) - f^{\star}]}{f (\tilde{x}^k) -
    f^{\star}} \leq \tfrac{1 + 2 m \alpha^2 L^2}{2 m (\alpha \mu - 
    \alpha^2 L^2)} . \label{eq:svrg-single}
  \end{equation}
  Telescope \eqref{eq:svrg-single} proves (i). If $\alpha = \tfrac{1}{4 \kappa
  L}$, the contraction ratio is $\tfrac{1}{3} + \tfrac{4 \kappa^2}{3 m}$.
  
  \textbf{Part (ii) {\sh}.} Let $c = 1$ and $\beta_k = {\beta} \leq \bar{\beta}$ in \eqref{pot-hbm}. Suppose $\alpha
  < \tfrac{1}{2 \kappa L}$ and $\bar{\beta} \leq \sqrt{\alpha L - 2 \alpha^2
  L^2 \kappa}$,
  \begin{align*}
    \mathbb{E}_t [V^{t + 1}] &{}\leq V^t - \left( \tfrac{1}{2} -
    \bar{\beta}^2 \right) L \| d^t \|^2 - (\alpha - 2 \alpha^2 L \kappa) \|
    \nabla f (x^t) \|^2\\
    &{}\quad + (1 - 2 \alpha L) \bar{\beta} \langle \nabla f (x^t), d^t \rangle +
    4 \alpha^2 L^2 (f (\tilde{x}^k) - f^{\star})\\
    &{}\leq V^t - \tfrac{L}{4} \| d^t \|^2 - (\alpha - 2 \alpha^2 L \kappa)
    \| \nabla f (x^t) \|^2 + \bar{\beta} | \langle \nabla f (x^t), d^t \rangle
    | + 4 \alpha^2 L^2 (f (\tilde{x}^k) - f^{\star})\\
    &{}\leq V^t - \left( \alpha - 2 \alpha^2 L \kappa -
    \tfrac{\bar{\beta}^2}{L} \right) \| \nabla f (x^t) \|^2 + 4 \alpha^2 L^2
    (f (\tilde{x}^k) - f^{\star})\\
    &{}\leq V^t - \left( 2 \alpha \mu - 4 \alpha^2 L^2 - \tfrac{2
    \bar{\beta}^2}{\kappa} \right) (f (x^t) - f^{\star}) + 4 \alpha^2 L^2 (f
    (\tilde{x}^k) - f^{\star})
  \end{align*}
  \begin{equation}
    \tfrac{\mathbb{E} [f (\tilde{x}^{k + 1}) - f^{\star}]}{f (\tilde{x}^k) -
    f^{\star}} \leq \tfrac{1 + 4 m \alpha^2 L^2}{m\left( 2 \alpha \mu - 4
    \alpha^2 L^2 - \tfrac{2 \bar{\beta}^2}{\kappa} \right)}
    \label{eq:hbm-single}
  \end{equation}
  Telescope \eqref{eq:hbm-single} proves (ii). If $\alpha = \tfrac{1}{8 \kappa
  L}$ and $\bar{\beta} = \tfrac{1}{\sqrt{32 \kappa}}$, the contraction ratio
  is $\tfrac{8 \kappa^2}{m} + \tfrac{1}{2}$.
\end{proof}

\subsection{Proof of \cref{prop:vr-stepsize-bound}}
\begin{proof}
  \textbf{Part (i) Scalar Stepsize.} For hypergradient feedback, the update rule is,
  \[ \alpha_{k + 1} = \alpha_k + \eta \tfrac{\nabla f (x^k - \alpha_k \nabla f
     (x^k))^{\top} \nabla f (x^k)}{\| \nabla f (x^k) \|^2}, \]
  where $x$ could be arbitrary. Let $x^{k+1} = x^k - \alpha_k \nabla f (x^k)$, since
  $\nabla f$ is $\mu$-strongly monotone and $L$-Lipschitz,
  \begin{equation}
    \mu \alpha_k^2 \| \nabla f (x^k) \|^2 \leq \langle x^k - x^{k+1}, \nabla f (x^k) -
    \nabla f (x^{k+1}) \rangle \leq L \alpha_k^2 \| \nabla f (x^k) \|^2
    \label{eq:lip-operator}
  \end{equation}
  Since $\nabla f (x^{k+1})^{\top} \nabla f (x^k) = \| \nabla f (x^k) \|^2 -
  \frac{1}{\alpha_k} \langle x^k - x^{k+1}, \nabla f (x^k) - \nabla f (x^{k+1}) \rangle$,
  we have
  \[ 1 - L \alpha_k \leq \tfrac{\nabla f (x^{k+1})^{\top} \nabla f (x^k)}{\| \nabla
     f (x^k) \|^2} \leq 1 - \mu \alpha_k \]
  Finally, by $\alpha_k \in \left[ \tfrac{1}{L}, \tfrac{1}{\mu} \right]$, we
  derived the bound on $\alpha_{k + 1}$,
  \[ \tfrac{1}{L} \leq (1 - \eta L) \alpha_k + \eta \leq \alpha_k + \eta
     \tfrac{\nabla f (x^{k+1})^{\top} \nabla f (x^k)}{\| \nabla f (x^k) \|^2} \leq (1
     - \eta \mu) \alpha_k + \eta \leq \tfrac{1}{\mu} . \]
  For ratio feedback, the update rule for scalar stepsize is,
  \begin{align*}
    \alpha_{k + 1} &{}= \alpha_k + \eta \tfrac{\nabla f (x^k - \alpha_k \nabla
    f (x^k))^{\top} \nabla f (x^k)}{f (x^k) - f^{\star}}\\
    &{}= \alpha_k + \eta \tfrac{\| \nabla f (x^k) \|^2 - \frac{1}{\alpha_k}
    \langle x^k - x^{k+1}, \nabla f (x^k) - \nabla f (x^{k+1}) \rangle}{f (x^k) - f^{\star}}
  \end{align*}
  By inequality \eqref{eq:lip-operator},
  \[ \alpha_k + \eta \tfrac{(1 - L \alpha_k) \| \nabla f (x^k) \|^2}{f (x^k) -
     f^{\star}} \leq \alpha_{k + 1} \leq \alpha_k + \eta \tfrac{(1 - \mu
     \alpha_k) \| \nabla f (x^k) \|^2}{f (x^k) - f^{\star}} \]
  Since $\alpha_k \in \left[ \frac{1}{L}, \frac{1}{\mu} \right]$, $(1 - L
  \alpha_k) \leq 0$ and $(1 - \mu \alpha_k) \geq 0$. Also, by $\tfrac{\|
  \nabla f (x^k) \|^2}{f (x^k) - f^{\star}} \leq 2 L$,
  \[ (1 - 2 \eta L^2) \alpha_k + 2 \eta L \leq \alpha_{k + 1} \leq (1 - 2 \eta
     L \mu) \alpha_k + 2 \eta L. \]
  When $\eta \leq \tfrac{1}{2 L^2}$, we have $\alpha_{k + 1} \in \left[
  \frac{1}{L}, \frac{1}{\mu} \right]$,
  \[ \tfrac{1}{L} = (1 - 2 \eta L^2)  \tfrac{1}{L} + 2 \eta L \leq \alpha_{k +
     1} \leq (1 - 2 \eta L \mu)  \tfrac{1}{\mu} + 2 \eta L = \tfrac{1}{\mu} .
  \]
  \textbf{Part (ii) Matrix Stepsize.} For hypergradient feedback, the update rule of matrix
  stepsize is,
  \begin{align*}
    \| P_{k + 1} \|_F &{}= \left\| (1 - \eta \rho) P_k + \eta \tfrac{\nabla f
    (x^k - P_k \nabla f (x^k)) \nabla f (x^k)^{\top}}{\| \nabla f (x^k) \|^2}
    \right\|_F\\
    &{}\leq (1 - \eta \rho) \| P_k \|_F + \eta \tfrac{\| \nabla f (x^k - P_k
    \nabla f (x^k)) \|}{\| \nabla f (x^k) \|}\\
    &{}\leq (1 - \eta \rho) \| P_k \|_F + \eta (1 + L \| P_k \|_F)\\
    &{}= (1 - (\rho - L) \eta) \| P_k \|_F + \eta\\
    &{}\leq (1 - (\rho - L) \eta)^{k + 1} \| P_0 \|_F + \tfrac{1}{\rho - L}
  \end{align*}
  where the first inequality is by triangular inequality and the property of
  Fronenius norm, the second inequality is by the $L$-smoothness of $f$, and
  the last inequality is by geometric sum. Given the parameter choice $\rho =
  2 L$, $P_0 = 0$, and $\eta \leq \tfrac{1}{L}$, we have part (ii) hold by $\|
  P_k \|_2 \leq \| P_k \|_F \leq \tfrac{1}{L}$.
  
  For ratio feedback, the update rule of matrix stepsize is,
  \begin{align*}
    \| P_{k + 1} \|_F &{}= \left\| (1 - \eta \rho) P_k + \eta \tfrac{\nabla f
    (x^k - P_k \nabla f (x^k)) \nabla f (x^k)^{\top}}{f (x^k) - f^{\star}}
    \right\|_F\\
    &{}\leq (1 - \eta \rho) \| P_k \|_F + \eta \tfrac{\| \nabla f (x^k - P_k
    \nabla f (x^k)) \| \| \nabla f (x^k) \|}{f (x^k) - f^{\star}}\\
    &{}\leq (1 - \eta \rho) \| P_k \|_F + 2 \eta L \tfrac{\| \nabla f (x^k -
    P_k \nabla f (x^k)) \|}{\| \nabla f (x^k) \|}\\
    &{}\leq (1 - \eta \rho) \| P_k \|_F + 2 \eta L (1 + L \| P_k \|_F)\\
    &{}= (1 - (\rho - 2 L^2) \eta) \| P_k \|_F + 2 \eta L\\
    &{}\leq (1 - (\rho - 2 L^2) \eta)^{k + 1} \| P_0 \|_F + \tfrac{2 L}{\rho
    - 2 L^2}
  \end{align*}
  Let $\rho = 4 L^2$, $P_0 = 0$, and $\eta \leq \tfrac{1}{2 L^2}$, we have
  part (ii) hold by $\| P_k \|_2 \leq \| P_k \|_F \leq \tfrac{1}{L}$.
\end{proof}

\subsection{Proof of \cref{thm:osgm-svrg}}
\begin{proof}
  \textbf{Part (i) No momentum, scalar stepsize.} The update rule is $x^{t +
  1} = x^t - c \alpha_k g_t$. Let $\bar{\alpha} = \underline{\alpha} =
  \alpha_k$ in \eqref{pot-svrg} from \cref{lem:vr-pot},
  \[ \mathbb{E}_t [V^{t + 1}] \leq V^t - (2 c \alpha_k \mu - 2 c^2 \alpha_k^2
     L^2) (f (x^t) - f^{\star}) + 2 c^2 \alpha_k^2 L^2 (f (\tilde{x}^k) -
     f^{\star}) . \]
  Suppose $c \leq \tfrac{1}{\alpha_k L \kappa}$, such that $2 c \alpha_k \mu -
  2 c^2 \alpha_k^2 L^2 \geq 0$. Telescope and rearrange, then the contraction
  factor is,
  \begin{equation}\label{eq:rho1}
    \tfrac{\mathbb{E} [f (\tilde{x}^{k + 1}) - f^{\star}]}{f (\tilde{x}^k) -
    f^{\star}} \leq \tfrac{1 + 2 m c^2 \alpha_k^2 L^2}{2 m (c \alpha_k \mu -
    c^2 \alpha_k^2 L^2)} = \tfrac{1}{2 m (c \alpha_k \mu - c^2 \alpha_k^2
    L^2)} + \tfrac{c^2 \alpha_k^2 L^2}{(c \alpha_k \mu - c^2 \alpha_k^2 L^2)}
    . 
  \end{equation}
  From \cref{prop:vr-stepsize-bound} (i), the stepsize is bounded $\alpha_k \in \left[ \tfrac{1}{L},
  \tfrac{1}{\mu} \right]$, then $c \leq \tfrac{1}{\alpha_k L \kappa}$ implies
  that $c \leq \tfrac{1}{\kappa^2}$. Suppose $c \leq \tfrac{1}{\kappa (\kappa
  + 1)}$, we have the first term of \eqref{eq:rho1} is maximized at $\alpha_k
  = \tfrac{1}{L}$, and the second term of \eqref{eq:rho1} is maximized at
  $\alpha_k = \tfrac{1}{\mu}$. Substitute into \eqref{eq:rho1} gets the upper
  bound of contraction factor,
  \[ \tfrac{\mathbb{E} [f (\tilde{x}^{k + 1}) - f^{\star}]}{f (\tilde{x}^k) -
     f^{\star}} \leq \tfrac{\kappa}{2 m (c - c^2 \kappa)} + \tfrac{c
     \kappa^2}{1 - c \kappa^2} . \]
  If $c = \tfrac{1}{3 \kappa^2}$, we can choose $m = 9 \kappa^3$ to have
  contraction factor $\tfrac{3}{4}$.
  
  \textbf{Part (ii) No momentum, matrix stepsize.} The update rule is $x^{t +
  1} = x^t - c P_k g_t$. From \cref{prop:vr-stepsize-bound} (ii) and the definition of $\mathcal{P}$, we have
  $\underline{\alpha} I \preceq P_k \preceq \tfrac{1}{L} I$. Let
  $\underline{\alpha}, \bar{\alpha} = \tfrac{1}{L}$ in \eqref{pot-svrg} from \cref{lem:vr-pot} (i),
  \[ \mathbb{E}_t [V^{t + 1}] \leq V^t - \left( 2 c \underline{\alpha}
     \mu - 2 c^2 \right) (f (x^t) - f^{\star}) + 2 c^2 (f (\tilde{x}^k) -
     f^{\star}) . \]
  Suppose $c \leq \underline{\alpha} \mu$ such that $2 c \underline{\alpha}
  \mu - 2 c^2 \geq 0$, then the contraction factor is,
  \[ \tfrac{\mathbb{E} [f (\tilde{x}^{k + 1}) - f^{\star}]}{f (\tilde{x}^k) -
     f^{\star}} \leq \tfrac{1 + 2 m c^2}{2 m \left( c \underline{\alpha} \mu -
     c^2 \right)} = \tfrac{1}{2 m \left( c \underline{\alpha} \mu - c^2
     \right)} + \tfrac{c}{\left( \underline{\alpha} \mu - c \right)} . \]
  If $c = \tfrac{\underline{\alpha} \mu}{3}$, we can choose $m =
  \tfrac{9}{\underline{\alpha}^2 \mu^2}$ to have contraction factor
  $\tfrac{3}{4}$.
  
  \textbf{Part (iii) Bounded momentum, scalar stepsize.} The update rule is
  $x^{t + 1} = x^t - c \alpha_k g_t + \beta_k (x^t - x^{t - 1})$. From the
  proof of \cref{lem:vr-pot}, suppose $c \alpha_k < \tfrac{1}{2 \kappa L}$ and $0
  \leq \beta_k \leq \bar{\beta}$, we have
  \begin{equation*}
    \mathbb{E}_t [V^{t + 1}]  \leq  V^t - \left( 2 c \alpha_k \mu - 4 c^2
    \alpha_k^2 L^2 - \tfrac{2 \bar{\beta}^2}{\kappa} \right) (f (x^t) -
    f^{\star}) + 4 c^2 \alpha_k^2 L^2 (f (\tilde{x}^k) - f^{\star}) .
  \end{equation*}
  Suppose $2 \bar{\beta}^2 \leq c \alpha_k \kappa \mu$ and $c \alpha_k \mu - 4
  c^2 \alpha_k^2 L^2 \geq 0$, then the contraction factor is,
  \begin{equation}
    \tfrac{\mathbb{E} [f (\tilde{x}^{k + 1}) - f^{\star}]}{f (\tilde{x}^k) -
    f^{\star}} \leq \tfrac{1 + 4 m c^2 \alpha_k^2 L^2}{m (c \alpha_k \mu - 4
    c^2 \alpha_k^2 L^2)} = \tfrac{1}{m (c \alpha_k \mu - 4 c^2 \alpha_k^2
    L^2)} + \tfrac{4 c \alpha_k L^2}{\mu - 4 c \alpha_k L^2} . \label{eq:rho3}
  \end{equation}
  From \cref{prop:vr-stepsize-bound} (i), the stepsize is bounded $\alpha_k \in \left[ \tfrac{1}{L},
  \tfrac{1}{\mu} \right]$. Suppose $c \leq \tfrac{1}{4 \kappa (\kappa + 1)}$,
  we have the first term of \eqref{eq:rho3} is maximized at $\alpha_k =
  \tfrac{1}{L}$, and the second term of \eqref{eq:rho3} is maximized at
  $\alpha_k = \tfrac{1}{\mu}$. Substitute into \eqref{eq:rho3} gets the upper
  bound of contraction factor,
  \[ \tfrac{\mathbb{E} [f (\tilde{x}^{k + 1}) - f^{\star}]}{f (\tilde{x}^k) -
     f^{\star}} \leq \tfrac{\kappa}{m (c - c^2 \kappa)} + \tfrac{4 c
     \kappa^2}{1 - 4 c \kappa^2} . \]
  If $c = \tfrac{1}{12 \kappa^2}$, which implies that $\bar{\beta} \leq
  \tfrac{1}{2 \sqrt{6} \kappa}$, we choose $m = 72 \kappa^3$ to have
  contraction factor $\tfrac{3}{4}$.
\end{proof}

\section{Experimental details}

\subsection{Dataset details} \label{app:exp-dataset}

\cref{tab:dataset} lists the details of 47 datasets used in \cref{sec:exp}, where $n$ is the number of samples, and $d$ is the number of features. 

\begin{table}[htbp]
  \centering
  \begin{tabularx}{\textwidth}{p{0.2\textwidth}YY|p{0.2\textwidth}YY}
    \toprule
    Dataset & $n$ & $d$ & Dataset & $n$ & $d$ \\
    \midrule
    a1a & 1,605 & 123 & a2a & 2,265 & 123 \\
    a3a & 3,185 & 123 & a4a & 4,781 & 123 \\
    a5a & 6,414 & 123 & a6a & 11,220 & 123 \\
    a7a & 16,100 & 123 & a8a & 22,696 & 123 \\
    a9a & 32,561 & 123 & covtype & 464,809 & 54 \\
    german.numer & 800 & 24 & gisette & 6,000 & 5,000 \\
    ijcnn1 & 35,000 & 22 & madelon & 2,000 & 500 \\
    mushrooms & 6,499 & 112 & news20 & 15,996 & 1,355,191 \\
    phishing & 8,844 & 68 & rcv1 & 20,242 & 47,236 \\
    real-sim & 57,847 & 20,958 & splice & 1,000 & 60 \\
    sonar & 166 & 60 & svmguide3 & 1,243 & 22 \\
    w1a & 2,477 & 300 & w2a & 3,470 & 300 \\
    w3a & 4,912 & 300 & w4a & 7,366 & 300 \\
    w5a & 9,888 & 300 & w6a & 17,188 & 300 \\
    w7a & 24,692 & 300 & w8a & 49,749 & 300 \\
    webspam & 280,000 & 254 & e2006 & 16,087 & 150,360 \\
    yearpredictionmsd & 463,715 & 90 & santander & 160,000 & 200 \\
    miniboone & 104,051 & 50 & guillermo & 16,000 & 4,296 \\
    creditcard & 227,845 & 29 & acsincome & 1,331,600 & 11 \\
    medical & 48,971 & 18 & airlines & 800,000 & 6 \\
    click-prediction & 1,597,928 & 11 & mtp & 3,560 & 202 \\
    elevators & 13,279 & 18 & ailerons & 11,000 & 40 \\
    superconduct & 17,010 & 79 & sarcos & 39,146 & 21 \\
    jannis & 46,064 & 54 &  &  &  \\
    \bottomrule
  \end{tabularx}
  \caption{Details of 47 datasets used in \cref{sec:exp}.}
  \label{tab:dataset}
\end{table}

\subsection{Practical variants} \label{app:exp-algs}

The difference between Practical {\osgdh} (\cref{alg:prac-osgd}) and {\osgd} (\cref{alg:sosgm}) is in the definition of feedback functions. Practical {\osgdh} uses in-sample feedback \eqref{eq:hoverfit} while {\osgd} uses out-of-sample feedback \eqref{eq:def-h}. Also, practical {\osgdh} drops the null step for efficiency.

\begin{algorithm}[ht!]
\caption{{Practical \texttt{OSGM-SGD}}}
\label{alg:prac-osgd}
\begin{algorithmic}[1]
    \STATE \textbf{Input:} Initial iterate $x^0$, initial stepsize $P_0$, candidate stepsize set $\mathcal{P}$, learning rate $\eta$, \\
    feedback $\ell_{x,\xi,\zeta} \in \{ r_{x,\xi,\zeta},\, h_{x,\xi,\zeta} \}$ defined by \eqref{eq:hoverfit}
    \FOR{$k = 0,1,2,\dots$}
        \STATE Sample $\xi^k$ uniformly

        \STATE 
        $x^{k+1}=x^k - P_k \nabla f_{\xi^k}(x^k)$

        \STATE 
        $P_{k+1}=\Pi_{\mathcal{P}} [P_k - \eta \nabla \ell_{x^k,\xi^k,\zeta^k}^k(P_k)]$
    \ENDFOR
\end{algorithmic}
\end{algorithm}

The implementation of practical variants {\ovh} (\cref{alg:prac-osgm-vr}) and {\osk} (\cref{alg:osgm-sketchy}) are inspired by \texttt{OSGM-Best} introduced by \cite{chu2025gradient}, which tune the stepsize and momentum simultaneously,
\[
x^+ = x - P \nabla f(x) +\beta (x - x^-).
\]
Specifically, \cite{chu2025gradient} introduces a joint hypergradient feedback,
\[
h_{x,x^-}(P,\beta) = \tfrac{\phi_{\omega}(x^+(P,\beta), x) - \phi_{\omega}(x, x^-)}{\|\nabla f(x)\|^2 + \tfrac{\tau}{2}\|x - x^-\|^2},
\]
where $\phi_{\omega}(x, x^-)\assign f(x) - f^\star + \tfrac{\omega}{2} \|x - x^-\|^2$ is the potential function for heavy-ball momentum. Then the stepsize and momentum are updated by online gradient descent. In {\ovh} and {\osk}, we substitute the deterministic gradients by their stochastic counterparts.

\begin{algorithm}[ht!]
\caption{Practical {\ovh}}
\label{alg:prac-osgm-vr}
\begin{algorithmic}[1]
\STATE \textbf{Input:} Initial $\tilde{x}^0$, $P_0=0$, $\beta_0=0$, epoch length $m$, {\os} learning rates $\eta_P$ and $\eta_{\beta}$, candidate set of diagonal stepsize and momentum $\mathcal{P}$, $\mathcal{B}$
\FOR{$k=0,1,2,\dots$}
    \STATE Compute snapshot gradient $\nabla f(\tilde{x}^k) = \tfrac1n\sum_{i=1}^n \nabla f_i(\tilde{x}^k)$
    
    \STATE Set $x^0 = \tilde{x}^k$
    \FOR{$t=0,\dots,m-1$}
        \STATE Sample $\xi^t$ uniformly
        \STATE Compute variance reduced gradient
        $
            g_t = \nabla f_{\xi^t}(x^t) - \nabla f_{\xi^t}(\tilde{x}^k) + \nabla f(\tilde{x}^k)
        $
        \vspace{0.1cm}
        
        \STATE 
        $
            x^{t+1}
            =
            x^t
            - P_t g_t
            + \beta_t (x^t - x^{t-1})
        $
        \STATE Define feedback $\ell_t (P,\beta) = \tfrac{f(x^t - P g_t + \beta (x^t - x^{t-1})) - f(x^t)}{\|g_t\|^2 + \|x^t - x^{t-1}\|^2}$

        \STATE Update stepsize:\qquad $P_{t+1} = \Pi_{\mathcal{P}}\big[P_{t} - \eta_P \nabla_P \ell_t (P_t,\beta_t) \big]$
        \vspace{0.1cm}
        \STATE Update momentum: $\beta_{t+1} = \Pi_{\mathcal{B}} \big[ \beta_{t} - \eta_{\beta} \nabla_{\beta} \ell_t (P_t,\beta_t) \big]$
        \vspace{0.1cm}
    \ENDFOR
    \STATE Choose $\tilde{x}^{k+1}$ uniformly from $\{x^0,\dots,x^m\}$, set $P_0=P_m, \beta_0=\beta_m$
\ENDFOR
\end{algorithmic}
\end{algorithm}

\subsection{Additional experiments}

\subsubsection{Evaluations of \cref{alg:osgm-svrg}}\label{app:exp-fig}

We evaluate the performance of {\ov} (\cref{alg:osgm-svrg}) on logistic regression problems. The batchsize is 256.

\paragraph{Benchmark algorithms.} We benchmark the following variance reduction algorithms.
\begin{itemize}
  \item {\svrg} \cite{johnson2013accelerating} with updated frequency $m=\lceil n/256 \rceil$ and tuned stepsize.
  \item {\saga} \cite{defazio2014saga} with tuned stepsize.
  \item {\kat} \cite{kovalev2020don}. \texttt{Loopless Katyusha} \cite{allen2018katyusha} with tuned stepsize.
  \item {\ov} with scalar stepsize, default {\os} learning rate $\eta=1/L$ and tuned constant momentum $\beta \in \{0.0, 0.9, 0.98\}$
\end{itemize}

\paragraph{Performance plots.} \cref{fig:epoch-logistic} shows the suboptimality plots on 8 medium-sized logistic regression problems. Notice that {\ov} with default learning rate outperforms baseline algorithms with tuned stepsize, especially in the later iterations. This experiment suggests the effectiveness of {\ov} as an outer-loop stepsize scheduler.

\begin{figure}[htbp]
  \centering
  \includegraphics[width=\linewidth]{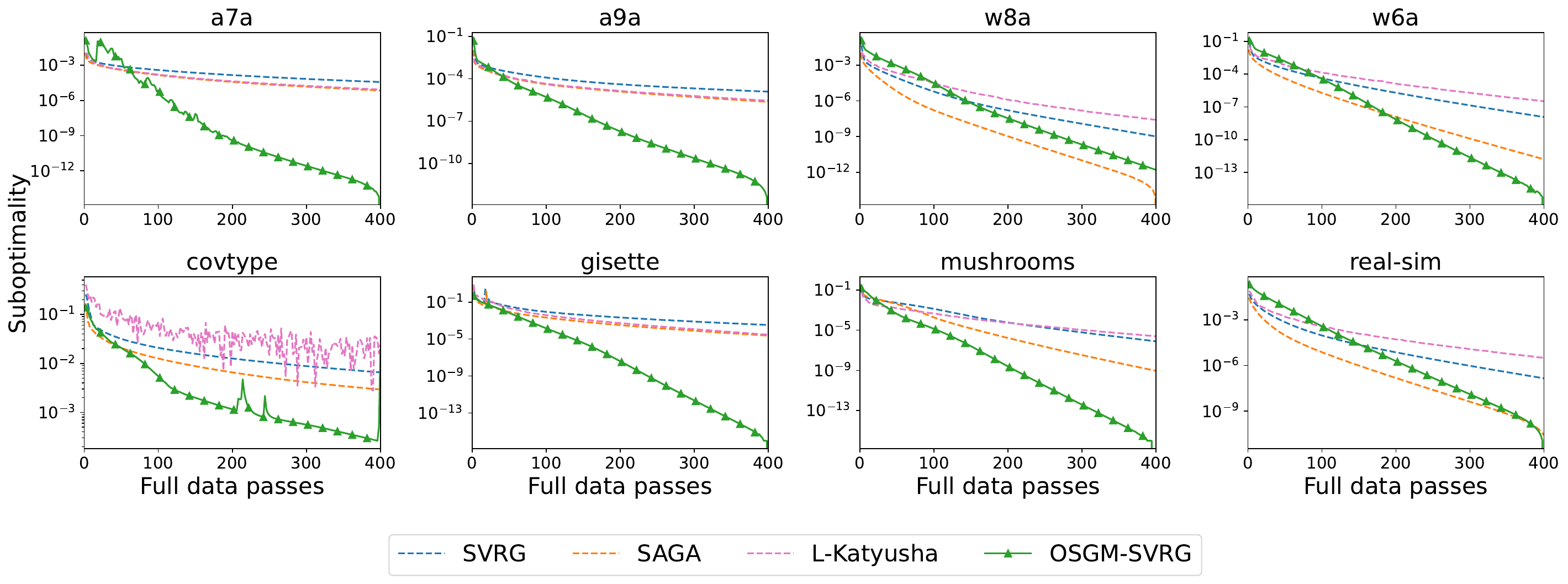}
  \caption{Performance of \cref{alg:osgm-svrg} on logistic regression problems.}
  \label{fig:epoch-logistic}
\end{figure}

\subsubsection{Evaluations of \cref{alg:sosgm}}\label{app:exp-fig2}

We evaluate the performance of \cref{alg:sosgm} with feedback \eqref{eq:hoverfit} (labeled as Naive) and feedback \eqref{eq:def-h} (labeled as Theory). 
The experiment setting is the same as \cref{sec:exp}. For benchmark algorithms, we use {\sgd} with stepsize $10^{-3}$, and {\osgd} with initial stepsize $10^{-3}$, {\os} learning rate $10^{-3}$.

From the performance plot \cref{fig:osgd-feedbacks}, {\osgd} outperforms {\sgd}, and {\osgdh} outperforms {\osgd}. As suggested in \cref{sec:large-batchsize}, {\osgdh} \eqref{eq:hoverfit} may not converge in some cases. In practice, however, using feedback \eqref{eq:hoverfit} may result in more aggressive stepsize tuning and faster convergence.

\begin{figure}[htbp]
  \centering
  \includegraphics[width=\linewidth]{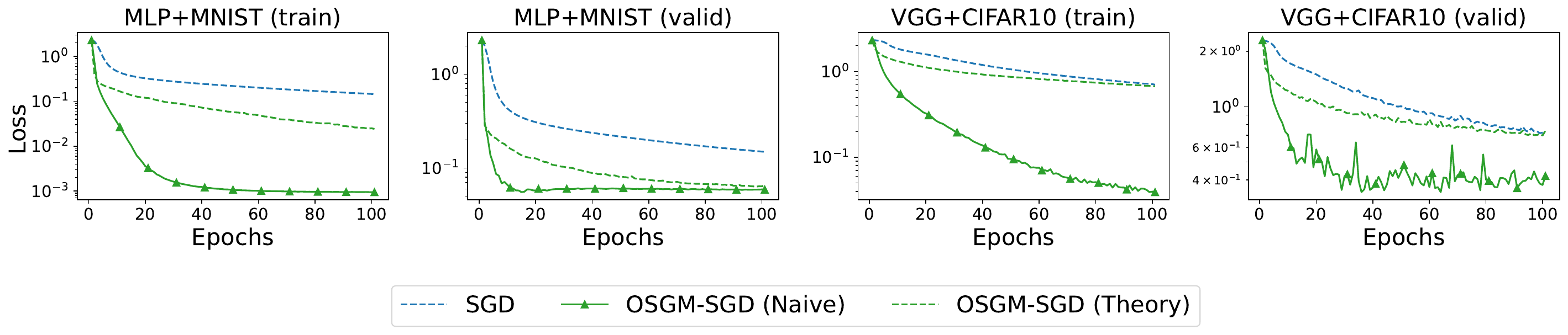}
  \caption{Performance of \cref{alg:sosgm} with feedback \eqref{eq:hoverfit} and feedback \eqref{eq:def-h}.}
  \label{fig:osgd-feedbacks}
\end{figure}

\end{document}